\documentclass[reqno,12pt]{amsart}
\usepackage{amsmath,amsfonts,amssymb,amsthm}
\usepackage{graphics}
\usepackage{epsfig}
\usepackage[usenames,dvipsnames]{color}
\usepackage{tikz}
\usepackage{mathtools}
\usepackage[colorlinks=true,linkcolor=black,menucolor=black,citecolor=black]{hyperref}
\setlength{\parindent}{0cm}
\usepackage[margin = .75in]{geometry}
\usepackage{cite}

\renewcommand{\d}{\ensuremath{\mathrm{d}}}
\newcommand{\R}{\ensuremath{\mathbb{R}}}
\newcommand{\Z}{\ensuremath{\mathbb{Z}}}

\newcommand{\N}{\ensuremath{\mathbb{N}}}

\newcommand{\NN}{\ensuremath{\mathcal N}}
\newcommand{\MM}{\ensuremath{\mathcal M}}

\newcommand{\AAA}{\ensuremath{\mathcal A}}
\newcommand{\eps}{\ensuremath{\varepsilon}}
\newcommand{\verti}[1]{\ensuremath{\left\lvert #1 \right\rvert}}
\newcommand{\vertii}[1]{\ensuremath{\left\lVert #1 \right\rVert}}
\newcommand{\vertiii}[1]{{\left\lvert\kern-0.25ex\left\lvert\kern-0.25ex\left\lvert #1
    \right\rvert\kern-0.25ex\right\rvert\kern-0.25ex\right\rvert}}
\renewcommand{\d}{\ensuremath{{\rm d}}}
\newcommand{\spanned}{\ensuremath{{\rm span}}}

\DeclarePairedDelimiter\floor{\lfloor}{\rfloor}

\xdefinecolor{lightblue}{RGB}{160,220,255}

\newtheorem{theorem}{Theorem}[section]
\newtheorem{definition}[theorem]{Definition}
\newtheorem{proposition}[theorem]{Proposition}

\newtheorem{lemma}[theorem]{Lemma}

\numberwithin{equation}{section}
\setcounter{tocdepth}{2}

\begin{document}
%
\title[On the regularity for the Navier-slip thin-film equation]{On the regularity for the Navier-slip thin-film equation in the perfect wetting regime}
\keywords{Degenerate parabolic equations; Higher-order parabolic equations; Nonlinear parabolic equations; Asymptotic behavior of solutions; Smoothness and regularity of solutions; Classical solutions; Thin fluid films; Lubrication theory}
\subjclass[2000]{35K65, 35K25, 35K55, 35B40, 35B65, 35A09, 76A20, 76D08}
\thanks{The author is grateful to Lorenzo Giacomelli, Dominik John, Hans Kn\"upfer, and Felix Otto for many fruitful discussions that have served as a motivation to conduct this research. Discussions with Slim Ibrahim, Nader Masmoudi, and Mircea Petrache on related questions are appreciated as well. The author also wishes to thank the anonymous referee for a detailed report on the manuscript that has lead to improvements of the presentation. Funding was obtained by the Fields Institute for Research in Mathematical Sciences in Toronto and the National Science Foundation under Grant No.~NSF DMS-1054115.}
\date{\today}
\author{Manuel~V.~Gnann}
\address{Department of Mathematics, University of Michigan\newline
2074 East Hall, 530 Church Street\newline
Ann Arbor, MI 48109-1043}
\email{mvgnann@umich.edu}
\begin{abstract}
We investigate perturbations of traveling-wave solutions to a thin-film equation with quadratic mobility and a zero contact angle at the triple junction, where the three phases liquid, gas, and solid meet. This equation can be obtained in lubrication approximation from the Navier-Stokes system of a liquid droplet with a Navier-slip condition at the substrate. Existence and uniqueness have been established by the author together with Giacomelli, Kn\"upfer, and Otto in previous work. As solutions are generically non-smooth, the approach relied on suitably subtracting the leading-order singular expansion at the free boundary.

\medskip

In the present note, we substantially improve this result by showing the regularizing effect of the degenerate-parabolic equation to arbitrary orders of the singular expansion. In comparison to related previous work, our method does not require additional compatibility assumptions on the initial data. The result turns out to be natural in view of the properties of the source-type self-similar profile.
\end{abstract}
\maketitle
\tableofcontents
%

\section{Introduction}
\subsection{The thin-film equation as a classical free-boundary problem}
We are interested in the thin-film equation with \emph{quadratic mobility}
\begin{subequations}\label{tfe_free}
\begin{equation}\label{tfe}
\partial_t h + \partial_z\left(h^2 \partial_z^3 h\right) = 0 \quad \mbox{for } \, t > 0 \, \mbox{ and } \, z > Z_0(t).
\end{equation}
This is a \emph{fourth-order} \emph{degenerate-parabolic} equation modeling the evolution of the height $h$ of a two-dimensional thin viscous film on a one-dimensional flat substrate as a function of time $t > 0$ and base point $z > Z_0(t)$ \cite{beimr.2009,d.1985,odb.1997}. For simplicity we assume that the droplet extends infinitely to positive $z$ and has a free boundary $Z_0(t)$ denoting the \emph{contact line}, that is, the \emph{triple junction} between the three phases liquid, gas, and solid (cf.~Fig.~\ref{fig:hodograph}).
\begin{figure}[htp]
\centering
\begin{tikzpicture}[scale=1]
\path[fill=lightblue] (1.5,0) to [out=0,in=195] (9,4) -- (9,0) -- (1.5,0);

\draw [very thick,->] (-1.2,0) -- (10,0);
\draw [very thick,->] (0,-1.2) -- (0,5.2);

\draw[very thick,red,dashed] (1.5,0) to [out=0,in=240] (9,5);
\draw[very thick,blue] (1.5,0) to [out=0,in=195] (9,4);

\draw [gray,dashed] (5.7,-.2) -- (5.7,2);
\draw [gray,dashed] (6.7,-.8) -- (6.7,2);
\draw [gray,dashed] (1.5,0) -- (1.5,-.8);
\draw [gray,dashed] (0,2) -- (6.7,2);

\draw [thick,blue,dashed,<->] (0,-.2) -- (5.7,-.2);
\draw [blue] (2.85,-.2) node[anchor=north] {$Z(t,x)$};

\draw [thick,red,dashed,<->] (1.5,-.8) -- (6.7,-.8);
\draw [red] (4.1,-.8) node[anchor=north] {$x$};

\draw (0,2) node[anchor=east] {$\color{blue}h(t,Z(t,x))\color{black} = \color{red}x^{\frac 32}\color{black}$};

\draw (2,3) node[anchor=east] {gas};

\draw [blue] (8.5,1.5) node[anchor=east] {liquid};

\draw [thick,violet,->] (1.8,.8) -- (1.5,0);
\draw [violet] (1.8,.8) node[anchor=south] {triple junction};

\draw (0,4.9) node[anchor=east] {film height $h$};

\draw (8.7,0) node[anchor=north] {base point $z$ or $x$};

\end{tikzpicture}
\caption{Schematic of a liquid thin film and the hodograph transform \eqref{hodograph}.}
\label{fig:hodograph}
\end{figure}
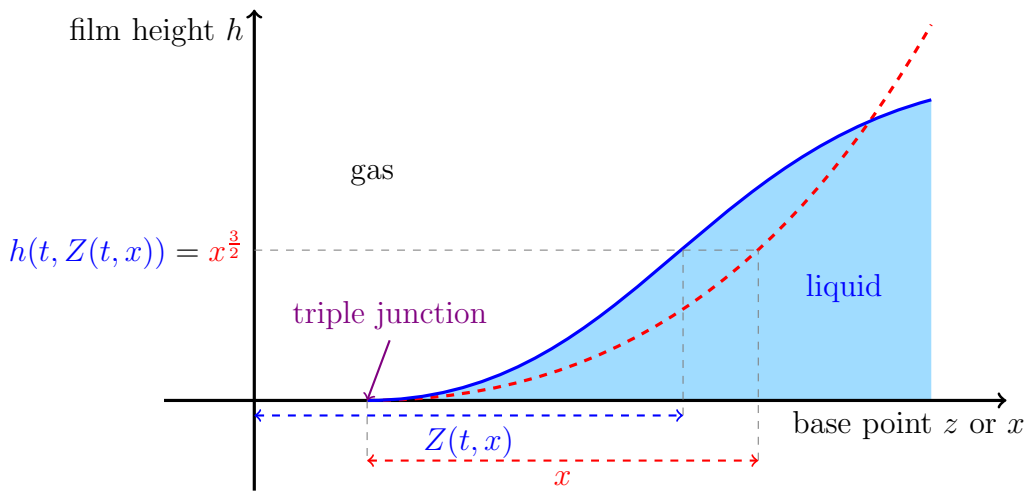
Necessarily
\begin{equation}\label{tfe_bdry_1a}
h = 0 \quad \mbox{for } \, t > 0 \, \mbox{ and } \, z = Z_0(t),
\end{equation}
which simply sets the location of the free boundary to be $Z_0(t)$. Secondly, we assume
\begin{equation}\label{tfe_bdry_1b}
\partial_z h = 0 \quad \mbox{for } \, t > 0 \, \mbox{ and } \, z = Z_0(t),
\end{equation}
leading to a \emph{zero contact angle} at the triple junction, known as \emph{complete} (or \emph{perfect}) \emph{wetting}. The notion can be explained if one considers \emph{quasi-static} droplet motion in which the contact angle is determined by a balance between the surface tensions of the three interfaces at the contact line (\emph{Young's law}). The condition $\partial_z h = 0$ at $z = Z_0(t)$ implies that an equilibrium is generically not achieved and therefore the film will ultimately cover the whole surface. Finally, a condition determining the evolution of $Z_0(t)$ is imposed:
\begin{equation}\label{tfe_bdry_2}
\lim_{z \searrow Z_0(t)} h \partial_z^3 h = \frac{\d Z_0}{\d t}(t) \quad \mbox{for } \, t > 0 \, \mbox{ and } \, z = Z_0(t).\end{equation}
\end{subequations}
This condition may be viewed as a \emph{Rankine-Hugoniot condition} for a viscous shock wave: Since \eqref{tfe} has the form of a (nonlinear) continuity equation
\begin{subequations}\label{continuity_eq}
\begin{equation}
\partial_t h + \partial_z (V h) = 0  \quad \mbox{for } \, t > 0 \, \mbox{ and } \, z > Z_0(t),
\end{equation}
where
\begin{equation}\label{def_vel}
V = h \partial_z^3 h
\end{equation}
\end{subequations}
is the transport velocity of $h$, by compatibility necessarily \eqref{tfe_bdry_2} holds true.

\medskip

Equation~\eqref{tfe} is a particular version of the general class of thin-film equations
\begin{equation}\label{tfe_general}
\partial_t h + \partial_z \left(h^n \partial_z^3 h\right) = 0 \quad \mbox{for } \, t > 0 \, \mbox{ and } \, z \in \{h > 0\},
\end{equation}
where $n$ is a real parameter. Apparently for $n \le 0$ equation~\eqref{tfe_general} has infinite speed of propagation and non-negativity of $h$ is not ensured. On the other hand, for $n = 3$ (corresponding to the \emph{no-slip condition} at the liquid-solid interface) equation~\eqref{tfe_general} exhibits unphysical features as well: The solution is singular at the free boundary, which does not move unless an infinite amount of energy to overcome dissipation is inserted into the system \cite{dd.1974,hs.1971,m.1964}. Since for $n > 3$ equation~\eqref{tfe_general} is even more degenerate, for a model relevant for the motion of fluid films (in which contact lines move with finite and in general nonzero velocity) necessarily $n \in (0,3$). The most important cases are $n = 1$ and $n = 2$, corresponding to the lubrication approximation of \emph{Darcy's flow} in the \emph{Hele-Shaw cell} ($n = 1$)\footnote{For an extensive well-posedness and regularity theory of \eqref{tfe_general} for $n = 1$ in the complete wetting case, we refer to \cite{gk.2010,gko.2008,g.2014,j.2015}. The partial wetting case is addressed in \cite{km.2015,km.2013}.} or the lubrication approximation of the \emph{Navier-Stokes equations} with a (linear) \emph{Navier-slip condition} ($n = 2$)\footnote{We refer to \cite{ggko.2014} for a well-posedness and partial regularity result of \eqref{tfe_general} in the complete wetting regime with $n = 2$ and to \cite{k.2011} for a result in the partial wetting regime.} at the liquid-solid interface, respectively \cite{d.1985,go.2003,km.2015,km.2013,odb.1997}. More detailed discussions of the literature, also addressing the well-established existence theory of weak solutions, can be found in \cite{ag.2004,b.1998,gs.2005}.

\medskip

This note addresses the regularity of solutions at the free boundary and may therefore be considered as a contribution to a regularity theory of higher-order degenerate-parabolic equations, an only insufficiently explored field. The author hopes that the present study is also relevant regarding an existence, uniqueness, and regularity theory for the (Navier-)Stokes equations with a moving contact line, an essentially open problem. Here, a thorough understanding of the singular behavior at the free boundary can potentially help in the construction of suitable function spaces. Additionally, in view of the aforementioned no-slip paradox, the question of how the Navier-slip condition (or even general nonlinear slip conditions) de-singularizes the solution at the contact line $z = Z_0(t)$ is also of interest from the applied view point. Loosely speaking, the following arguments will demonstrate that higher spatial regularity (i.e., regularity in the variable $z$) goes hand in hand with higher regularity in the time variable $t$. In numerical studies in \cite{p.2015}, this observation is used to test numerical schemes regarding their precision in resolving the dynamics of moving contact lines.

\subsection{Transformations}
We review the transformations discussed in detail in \cite[Sec.~1.2, Sec.~1.3, Sec.~A.1]{ggko.2014}: Generic solutions to \eqref{tfe_free} are traveling-wave solutions, that is, solutions of the form $h(t,z) = H_\mathrm{TW}(x)$, where $x = z - V_0 t$ and $V_0 > 0$ is the speed of the traveling wave. By rescaling, we may without loss of generality assume $V_0 = \frac 3 8$ and one may conclude that in the case of a moving infinite cusp a strictly monotone similarity solution is given by\footnote{A discussion of traveling-wave solutions can be found in \cite{bko.1993}.} $H_\mathrm{TW}(x) = x^{\frac 3 2}$. Considering perturbations of this profile, we set
\begin{equation}\label{hodograph}
h(t,Z(t,x)) = x^{\frac 3 2} \quad \mbox{for } \, t, x > 0.
\end{equation}
Under the assumption that $h(t,z)$ is strictly monotone in $z$ for $z > Z_0(t)$, equation~\eqref{hodograph} uniquely determines the function $Z = Z(t,x)$, thus interchanging dependent and independent variables and fixing the boundary to $x = 0$. The transformation \eqref{hodograph} is known as the \emph{hodograph} transform (cf.~Fig.~\ref{fig:hodograph}). The related \emph{von Mises} transform has been applied already in the context of the porous-medium equation and the thin-film equation with linear mobility in higher dimensions \cite{j.2015, k.1999}. Plugging expression~\eqref{hodograph} into \eqref{tfe} and noting that the chain rule (applied to \eqref{hodograph}) gives the transformations
\begin{equation}\label{tfe_trafo_0}
\partial_t h = - Z_t \partial_z h \qquad \mbox{and} \qquad \partial_z = Z_x^{-1} \partial_x,
\end{equation}
equation \eqref{tfe_free} now reads\footnote{Here and in what follows, differential operators act on everything to their right, that is, e.g.
\[
Z_x^{-1} \partial_x x^3 \left(Z_x^{-1} \partial_x\right)^3 x^{\frac 3 2} \equiv Z_x^{-1} \partial_x \left(x^3 \left(Z_x^{-1} \partial_x \left(Z_x^{-1} \partial_x \left(Z_x^{-1} \partial_x\right)\right)\right)\right).
\]
}
\begin{equation}\label{tfe_trafo_1}
- Z_t Z_x^{-1} \partial_x x^{\frac 3 2} + Z_x^{-1} \partial_x x^3 \left(Z_x^{-1} \partial_x\right)^3 x^{\frac 3 2} = 0 \quad \mbox{for } \, t, x > 0.
\end{equation}
Defining
\begin{equation}\label{trafo}
F := Z_x^{-1}
\end{equation}
and noting that $F_t = - F^2 Z_{xt}$, we observe that \eqref{tfe_trafo_1} alters to
\[
x \partial_t F + F^2 x \partial_x x^{- \frac 1 2} \partial_x x^3 F \partial_x F \partial_x F x^{\frac 1 2} = 0 \quad \mbox{for } \, t, x > 0.
\]
By commuting the powers of $x$ with the differential operators $\partial_x$, we then arrive at the equation
\begin{equation}\label{tfe_trafo_2}
x \partial_t F + \MM(F,\cdots,F) = 0 \quad \mbox{for } \, t, x > 0,
\end{equation}
where $\MM$ is a 5-linear form explicitly given by
\begin{equation}\label{5linear}
\MM(F_1,\cdots,F_5) := F_1 F_2 D \left(D + \frac 3 2\right) F_3 \left(D - \frac 1 2\right) F_4 \left(D + \frac 1 2\right) F_5
\end{equation}
and $D := x \partial_x$ denotes the scaling-invariant (logarithmic) derivative in space (setting $s := \ln x$ we have $D = \partial_s$). We observe $Z_\mathrm{TW} \stackrel{\eqref{hodograph}}{=} x - \frac 3 8 t$ and $F_\mathrm{TW} \stackrel{\eqref{trafo}}{\equiv} 1$ for the traveling-wave profile (indeed $\MM(1,\cdots,1) \stackrel{\eqref{5linear}}{=} 0$), so that by setting
\begin{equation}\label{trafo_2}
u := F-1
\end{equation}
in fact $u_\mathrm{TW} = 0$ and we are lead to study the Cauchy problem
\begin{subequations}\label{cauchy}
\begin{align}
x \partial_t u + p(D) u &= \NN(u) \quad \mbox{for } \, t, x > 0, \label{pde}\\
u_{|t = 0} &= u^{(0)} \quad \mbox{for } \, x > 0.
\end{align}
\end{subequations}
Here we use the following notation:
\begin{itemize}
\item[$\bullet$] $p(\zeta)$ is a fourth-order polynomial
\begin{equation}\label{poly}
p(\zeta) := \zeta \left(\zeta^2 + \frac 1 2 \zeta - \frac 3 4\right) \left(\zeta + \frac 3 2\right) = \zeta (\zeta - \beta) \left(\zeta + \beta + \frac 1 2\right) \left(\zeta + \frac 3 2\right)
\end{equation}
with the irrational root $\beta := \frac{\sqrt{13} - 1}{4} \approx 0.65$. The operator $p(D)$ can be derived from the 5-linear form $\MM$ (cf.~\eqref{5linear}) by noting that
\[
p(D) u = \MM(1,\cdots,1,u) + \cdots + \MM(u,1,\cdots,1).
\]
\item[$\bullet$] $\NN(u)$ stands for the nonlinearity of the equation and has the structure
\begin{equation}\label{nonlinearity}
\NN(u) := p(D) u - \MM(1+u,\cdots,1+u).
\end{equation}
\end{itemize}
The structure of \eqref{pde} is such that the left-hand side is linear in $u$, whereas the right-hand side contains terms that are at least quadratic and at most quintic in $\{u, Du, \cdots, D^4 u\}$. The form of \eqref{pde} can be guessed from \eqref{tfe} immediately: By \eqref{hodograph} the linearization of the spatial part $\partial_z \left(h^2 \partial_z^3 h\right)$ of \eqref{tfe} scales as $x^{-1}$ and therefore the spatial operator $x^{-1} p(D)$ appears, where $p(\zeta)$ has to be of order $4$. This implies the space-time scaling $x \sim t$, unlike $x \sim t^{\frac 1 4}$ for non-degenerate fourth-order parabolic equations. Two of the roots of the polynomial $p(\zeta)$ are immediate as well: The root $-\frac 3 2$ is directly related to the exponent of the right-hand side of the hodograph transform \eqref{hodograph}. The root $0$ appears as the nonlinear equation \eqref{tfe} has divergence form and this feature is preserved in a linearization. In other words, one may understand the occurrence of this root by noting that the traveling wave $F_{\mathrm{TW}} = 1$ is simultaneously a solution of the respective linear and nonlinear equations, and the equations for $u$ and $F$ yield the same linear operator. In contrast to these two roots, the emergence of the roots $\beta$ and $-\beta-\frac 1 2$ in the polynomial $p(\zeta)$ in \eqref{poly} is a genuinely non-trivial feature, specific to higher-order degenerate-parabolic equations: In the second-order case, $p(\zeta)$ would be a second-order polynomial and the only two roots of $p(\zeta)$ would be immediate by the same arguments as above. An accessible way to understand this uncommon phenomenon in the fourth-order case for source-type self-similar solutions using the language of dynamical systems theory is explained in \cite{ggo.2013,bgk.2016}: There the roots discussed above turn out to be the eigenvalues of a linearized dynamical system at a stationary point (representing the contact line) and the situation can be reduced due to known regularity results of the corresponding invariant manifolds (Hartman-Grobman theorem).

\medskip

The rest of the note will be mainly concerned with the analysis of \eqref{cauchy}.

\subsection{Notation}
For $f, g \ge 0$ we write $f \lesssim_S g$ if there exists a constant $C > 0$ depending on parameters $S$ such that $f \le C g$. In this case we say that $f$ can be estimated by $g$ or equivalently that $g$ bounds/controls $f$. Furthermore, we write $f \sim_S g$ if $f \lesssim_S g$ and $f \gtrsim_S g$. For a non-negative quantity $r$ we say that a property is true for $r \ll_S 1$ (or $r \gg_S 1$, respectively) if there exists a (sufficiently large) constant $C > 0$ depending on $S$ such that  the property is true for $r \le C^{-1}$ ($r \ge C$). Then we say that $r \ge 0$ has to be sufficiently small (large). If $S = \emptyset$ or if the dependence is specified elsewhere, we just write $f \lesssim g$ etc. The space $C_0^\infty((0,\infty))$ denotes the space of test functions, i.e., the space of all $u: (0,\infty) \to \R$ which are infinitely differentiable with compact support contained in $(0,\infty)$. For $\alpha \in \R$, we denote by $\floor{\alpha} := \max\{k \in \Z: \, k \le \alpha\}$ the integer part (floor) of $\alpha$. We write $\verti{A}$ for the number of elements (cardinality) of a finite set $A$.

\subsection{Weighted $L^2$-norms}
For the subsequent results, we introduce a scale of weighted $L^2$-norms $\verti{\cdot}_\alpha$. These are given by
\begin{equation}\label{l2_norm}
\verti{u}_\alpha^2 := \int_0^\infty x^{-2 \alpha} (u(x))^2 \frac{\d x}{x} = \int_{-\infty}^\infty e^{-2 \alpha s} \left(u(e^s)\right)^2 \d s \quad \mbox{with } \, \alpha \in \R.
\end{equation}
The larger $\alpha$ is, the better the decay of $u(x)$ as $x \searrow 0$ if $\verti{u}_\alpha < \infty$. To make this a point-wise statement, it is convenient to define Sobolev norms
\begin{equation}\label{sobolev_norm}
\verti{u}_{k,\alpha}^2 := \sum_{\ell = 0}^k \int_0^\infty x^{-2 \alpha} \left(D^\ell u(x)\right)^2 \frac{\d x}{x} = \sum_{\ell = 0}^k \int_{-\infty}^\infty e^{-2 \alpha s} \left(\partial_s^\ell u(e^s)\right)^2 \d s \quad \mbox{with } \, k \in \N_0, \; \, \alpha \in \R.
\end{equation}
We also use the notation $(\cdot,\cdot)_{k,\alpha}$ and $(\cdot,\cdot)_\alpha$ for the induced inner products. Setting $v(s) := e^{- \alpha s} u\left(e^s\right)$, it is elementary to see that $\verti{u}_{k,\alpha} \sim_\alpha \vertii{v}_{W^{k,2}(\R)}$. In particular $\verti{u}_{1,\alpha} < \infty$ implies $u(x) = o\left(x^\alpha\right)$ as $x \searrow 0$. Note, however, that control of an increasing number of $D$-derivatives does not lead to better regularity properties of $u(x)$ in $x = 0$ as $D$ is scaling-invariant in $x$. Essentially the index $k$ in the norm $\verti{u}_{k,\alpha}$ controls the \emph{interior regularity} of $u$, whereas $\alpha$ yields control on the regularity at the boundary $x = 0$. The identification with the standard Sobolev norms also guarantees that the test functions $C_0^\infty((0,\infty))$ are dense in the spaces $\{u \mbox{ locally integrable}: \, \verti{u}_{k,\alpha} < \infty\}$. More details are contained in \cite[Sec.~4]{ggko.2014}.

\medskip

The well-posedness proof of \cite{ggko.2014} relied on the control of the initial data in the norm $\vertiii{\cdot}_0$, where
\begin{equation}\label{norm_initial}
\vertiii{u^{(0)}}_0^2 := \verti{u^{(0)}}_{k+6,-\delta}^2 + \verti{u^{(0)} - u^{(0)}_0}_{k+6,\delta}^2,
\end{equation}
$k \ge 3$, $u^{(0)}_0 = u^{(0)}\left(x = 0\right)$ (the boundary value of $u^{(0)}$), and $\delta > 0$ is chosen sufficiently small. Indeed, the subsequent result will use the same norm with sufficiently large $k \in \N$ and sufficiently small $\delta > 0$ depending on the value of $N_0$. We notice that by quite elementary arguments (cf.~\cite[Eq.~(8.5)]{ggko.2014}) the Lipschitz norm of the hodograph coordinates $Z(0,x)-x$ is controlled by this norm, i.e., $\sup_{x > 0} \verti{u^{(0)}(x)} \lesssim \vertiii{u^{(0)}}_0$. Smallness of the Lipschitz norm of $Z(0,x)-x$ ensures strict monotonicity and thus invertibility of transformation~\eqref{hodograph}. This was in fact the crucial assumption in \cite{j.2015, k.1999}.
\section{The main result}
\subsection{A regularity result\label{ssec:regularity}}
For motivating our main result, we may have a heuristic look at the properties of problem~\eqref{cauchy}: Since we are interested in a perturbative result, that is, $u^{(0)}$ and $u$ are assumed to be small in suitable norms (cf.~e.g.~\eqref{norm_initial}), the precise understanding of the linearized problem
\begin{subequations}\label{lin_cauchy}
\begin{align}
x \partial_t u + p(D) u &= f \quad \mbox{for } \, t, x > 0, \label{lin_pde}\\
u_{|t = 0} &= u^{(0)} \quad \mbox{for } \, x > 0
\end{align}
\end{subequations}
with a right-hand side $f(t,x)$, turns out the be essential. For $x \ll 1$, the term $p(D) u$ will be dominant. Considering $p(D) u \approx f$ for $x \ll 1$, all solutions of this ordinary differential equation (ODE) are given by the sum of a particular solution and a linear combination of solutions to the homogeneous equation $p(D) u \equiv 0$. The solution space of $p(D) u \equiv 0$ is spanned by $x^0$, $x^\beta$, $x^{-\beta-\frac 1 2}$, and $x^{-\frac 3 2}$. Clearly, the last two powers are ruled out by compatibility with transformations~\eqref{hodograph} and \eqref{trafo}. Nonetheless, the other two, $x^0$ and $x^\beta$, generically appear in the expansion of the solution $u$ close to the boundary $x = 0$ as a detailed analysis of the resolvent equation shows \cite[Sec.~6]{ggko.2014}. Due to the perturbation $x \partial_t u$, one may then expect that $u = v + x^\beta w$, where both $v$ and $w$ are smooth functions in $x$ up to the boundary $x = 0$. While it is in principle possible to construct solutions of this form for the linear problem \eqref{lin_cauchy}, such a structure is incompatible with the nonlinear equation \eqref{pde} that mixes the powers $x$ and $x^\beta$. What we are instead aiming at is
\begin{equation}\label{structure_u}
u(t,x) = \bar u\left(t,x,x^\beta\right),
\end{equation}
where $\bar u = \bar u(t,x,y)$ is a smooth function in $(x,y) \in \left\{\left(x^\prime,y^\prime\right) \in \R^2: \, x^\prime, y^\prime  \ge 0\right\}$. Indeed, such a result can be expected in view of the analysis of source-type self-similar solutions by the author joint with Giacomelli and Otto \cite{ggo.2013}, where a stronger result is shown: There
\[
h(t,z) = t^{-\frac 1 6} H(x) \quad \mbox{with } \, x = z t^{-\frac 1 6} \, \mbox{ and } \, H(x) = C x^{\frac 3 2}\left(1 + \bar u\left(x,x^\beta\right)\right),
\]
where $C$ is a positive constant and $\bar u(x,y)$ is analytic in a neighborhood of $(x,y) = (0,0)$. Not surprisingly, as in \eqref{cauchy} or \eqref{lin_cauchy}, $\beta = \frac{\sqrt{13}-1}{4}$ turns out to be the root of a polynomial that determines the linearized problem for $H$.

\medskip

We denote by
\begin{equation}\label{def_kn}
K_{N_0} := \left\{n_1 + \beta n_2: \, (n_1, n_2) \in \N_0^2, \; n_1 + \beta n_2 < N_0\right\}
\end{equation}
the set of admissible exponents up to order $O\left(x^{N_0}\right)$ (cf.~Fig.~\ref{fig:power}). The main result of the note reads as follows:
\begin{theorem}\label{th:main}
For any $N_0 \in \N_0$ there exist $\eps > 0$, $k \in \N$, and $\delta > 0$ such that if
\[
\vertiii{u^{(0)}}_0 \stackrel{\eqref{norm_initial}}{=} \sqrt{\verti{u^{(0)}}_{k+6,-\delta}^2 + \verti{u^{(0)} - u^{(0)}_0}_{k+6,\delta}^2} \le \eps,
\]
the unique solution $u$ of \eqref{cauchy} (constructed in \cite[Th.~3.1]{ggko.2014}) fulfills the point-wise expansion (cf.~Fig.~\ref{fig:power})
\begin{equation}\label{expansion}
u(t,x) = \sum_{i \in K_{N_0}} u_i(t) x^i + R_{N_0}(x,t) x^{N_0} \quad \mbox{as } \, x \searrow 0 \, \mbox{ and } \, t > 0,
\end{equation}
where the coefficients $u_i = u_i(t)$ are continuous functions in $t > 0$, and the correction $R_{N_0} = R_{N_0}(x,t)$ is again continuous and for every $t > 0$ uniformly bounded in $x > 0$. Furthermore,
\begin{subequations}\label{decay_as}
\begin{equation}
u_i(t) = o\left(t^{-i}\right) \quad \mbox{as } \, t \to \infty
\end{equation}
and
\begin{equation}
R_{N_0}(x,t) = o\left(t^{-N_0}\right) \quad \mbox{as } \, t \to \infty \, \mbox{ uniformly in } \, x > 0.
\end{equation}
\end{subequations}
\end{theorem}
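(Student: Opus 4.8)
The plan is to run an energy/regularity bootstrap on the transformed equation \eqref{pde}, peeling off one term of the expansion \eqref{expansion} at a time in order of increasing exponent. The starting point is the well-posedness result of \cite{ggko.2014}: for the norm $\vertiii{u^{(0)}}_0$ small, one already has a unique global solution $u$ with control of $\verti{u(t,\cdot)}_{k+6,-\delta}$ and $\verti{u(t,\cdot)-u_0(t)}_{k+6,\delta}$, uniformly in $t$ and with appropriate time decay, where $u_0(t) = u(t,x=0)$. Because $\verti{u(t,\cdot)}_{k+6,-\delta} < \infty$ gives $u(t,x) = o(x^{-\delta})$ and $\verti{u(t,\cdot)-u_0(t)}_{k+6,\delta} < \infty$ gives $u(t,x)-u_0(t) = o(x^\delta)$, the base case $N_0$ with only $i=0$ in $K_{N_0}$ (i.e.\ $\delta < N_0 \le \beta$, say $N_0$ near $\beta$) is essentially already contained in \cite{ggko.2014}, with $u_0(t) = o(1)$ and $R(x,t) = o(t^0)$ following from the stated decay. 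This fixes the induction anchor.

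The inductive step is the heart of the matter. Suppose the expansion \eqref{expansion} holds to order $N_0$, with the listed continuity and decay. Let $N_1 = \min\{i \in K_{N_0+1}\setminus K_{N_0}\}$ be the next admissible exponent and write $u = P(t,x) + r(t,x)$, where $P$ is the known polynomial part $\sum_{i \in K_{N_0}} u_i(t) x^i$ and $r = R_{N_0}(x,t) x^{N_0}$, so $r = o(x^{N_0})$ pointwise and (after the previous step's energy control) $\verti{r(t,\cdot)}_{k'+\cdots,-N_1+\eta}$-type norms are finite for suitable $\eta>0$. Substituting into \eqref{pde} and using that $p(D)$ annihilates $x^0$ and $x^\beta$ and acts diagonally on each monomial $x^i$ as multiplication by $p(i) \ne 0$ for $i \notin \{0,\beta\}$ (note $p(n_1+\beta n_2)=0$ only when $n_2\in\{0\}$ and $n_1=0$, so all the \emph{new} exponents are non-resonant), one obtains an equation of the form $x\partial_t r + p(D) r = g(t,x)$, where $g$ collects (i) the nonlinearity $\NN(u)$ evaluated at the known-plus-remainder decomposition, which by the at-least-quadratic structure of $\NN$ and the inductive expansion has an expansion $g = \sum c_j(t) x^j + (\text{better})$ whose leading term sits at exponent $\ge N_1$, and (ii) the $x\partial_t$ applied to $P$, which only produces exponents already in $K_{N_0}$ and is absorbed by redefining coefficients. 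The key algebraic point is that $\NN$, being built from $\MM$ via \eqref{nonlinearity}, maps products of monomials $x^{i}x^{i'}$ (and their $D$-images) to $x^{i+i'}$, so the set of exponents appearing is closed under addition — exactly the lattice $K_{N_0}$ generated by $1$ and $\beta$ — which is why the ansatz \eqref{structure_u} is self-consistent and why no non-integer, non-$\beta\mathbb{N}_0$ powers are created.

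One then extracts the next coefficient: solving $p(D)(u_{N_1} x^{N_1}) = (\text{leading part of }g)$ gives $u_{N_1}(t) = p(N_1)^{-1} c_{N_1}(t)$, a continuous function of $t$ inheriting the decay $o(t^{-N_1})$ from the decay of the $u_i$'s feeding into $c_{N_1}$; subtracting $u_{N_1}(t)x^{N_1}$ from $r$ and running the weighted-$L^2$ energy estimate of \cite{ggko.2014} one notch higher (i.e.\ with weight exponent $-N_1'$ for the next exponent $N_1'$, possibly increasing $k$ and decreasing $\delta$) closes the loop and upgrades the new remainder to $o(x^{N_1'})$ with the right time decay. Iterating finitely many times reaches $N_0$. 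The main obstacle I anticipate is \textbf{the energy estimate at each higher weight}: one must verify that the bilinear form associated to $p(D)$ remains coercive (or that the commutator/boundary terms stay controllable) in the shifted weighted norm $\verti{\cdot}_{\cdot,-N_1}$, i.e.\ that $N_1$ does not collide with the spectral gap structure of $p(D)$; since the "forbidden" roots are $-\beta-\tfrac12$ and $-\tfrac32$ and all new positive exponents lie strictly between consecutive admissible values, this should hold, but the bookkeeping — tracking how large $k$ and how small $\delta,\eps$ must be taken as functions of $N_0$, and handling the $x\gtrsim 1$ region where $p(D)u$ is no longer dominant via the same cutoff arguments as in \cite{ggko.2014} — is where the real work lies. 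The time-decay assertions \eqref{decay_as} come essentially for free once the spatial expansion is established, by feeding the known decay of $\vertiii{u(t,\cdot)}_0$-type quantities through the same coefficient-extraction identities.
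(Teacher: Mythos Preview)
Your outline has the right shape (peel off coefficients in order, control the remainder in a stronger weighted norm), but it misses the central difficulty of the paper and would not close as written.

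The gap is in your ``run the weighted-$L^2$ energy estimate one notch higher'' step. The operator $p(D)$ is coercive with respect to $(\cdot,\cdot)_\alpha$ only for $\alpha$ in a bounded interval contained in $(-1,0)$; the roots $0$ and $\beta$ (not just the negative roots $-\beta-\tfrac12$ and $-\tfrac32$ you mention) obstruct coercivity at \emph{every} positive weight. So you cannot simply test the equation $x\partial_t r + p(D) r = g$ against $r$ in $\verti{\cdot}_{\cdot,N_1}$ and absorb. The paper's remedy is to apply $p(D-1)$, then $p(D-2)$, etc.\ (together with extra factors $\prod_{i\in I_n}(D-i)$ killing the non-integer admissible exponents), which shifts the coercivity range to $(n-1,n)$. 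But because $x\partial_t$ and $p(D-n)$ do not commute, each such application spits out a commutator term of the form $x\,q_n(D)\,\partial_t v^{(n)}$, i.e.\ a \emph{time derivative} of the solution at one lower spatial weight. To absorb that, you need a maximal-regularity estimate for $\partial_t u$, which in turn produces a $\partial_t^2 u$ term at still lower weight, and so on. The upshot is that gaining one unit of spatial weight costs one time derivative, and the bootstrap must run simultaneously in a two-dimensional array of indices $(n,m)$ (space order, time order) with carefully matched time weights $t^{2(\alpha+n+m)-3}$; see the paper's Sections~\ref{ssec:formal}--\ref{ssec:heur_par}. Your proposal treats $x\partial_t r$ as a perturbation that can be handled after the fact (``time-decay assertions come essentially for free''), but in this scaling-invariant problem $x\partial_t$ and $p(D)$ have the same weight, and the coupling is the whole story.

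A smaller point: your resonance claim ``$p(n_1+\beta n_2)=0$ only when $n_2=0$ and $n_1=0$'' is false, since $p(\beta)=0$; the exponents $0$ and $\beta$ are both resonant, which is why they appear as genuine degrees of freedom rather than being fixed algebraically by $u_i = p(i)^{-1}c_i$. The formal coefficient recursion you sketch is $\dot u_{i-1} + p(i)\,u_i = (\NN(u))_i$, which already shows that time derivatives of lower coefficients feed into higher ones and that the exponents $0,\beta$ play a distinguished role.
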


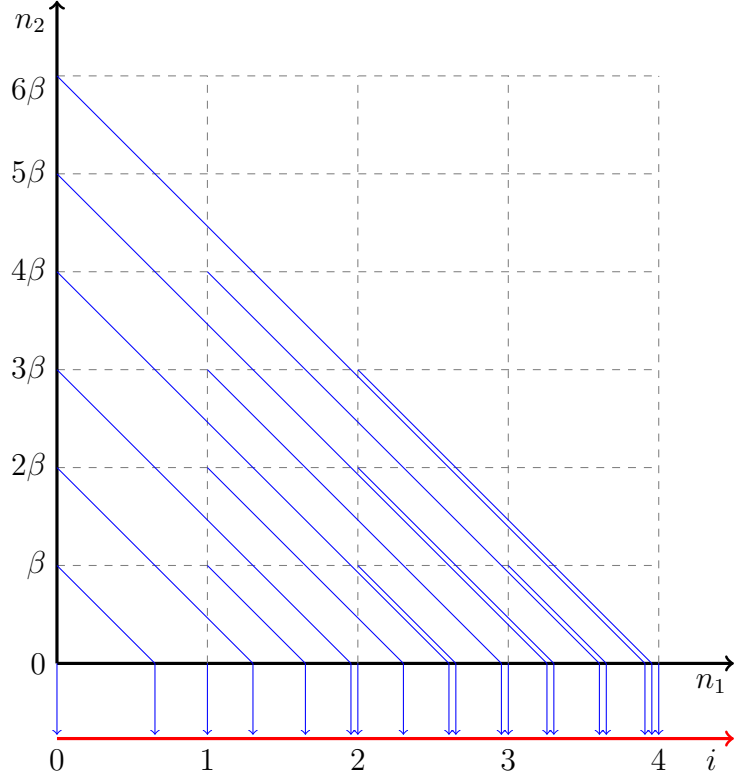
\begin{figure}[htp]
\centering
\begin{tikzpicture}[scale=1]
\draw[very thick,->] (-4,0) -- (5,0);
\draw (4.7,0) node[anchor=north] {$n_1$};
\draw (-4,0) node[anchor=east] {$0$};
\draw [gray,dashed] (-4,1.3028) -- (4,1.3028);
\draw (-4,1.3028) node[anchor=east] {$\beta$};
\draw [gray,dashed] (-4,2.6056) -- (4,2.6056);
\draw (-4,2.6056) node[anchor=east] {$2\beta$};
\draw [gray,dashed] (-4,3.9083) -- (4,3.9083);
\draw (-4,3.9083) node[anchor=east] {$3\beta$};
\draw [gray,dashed] (-4,5.2111) -- (4,5.2111);
\draw (-4,5.2111) node[anchor=east] {$4\beta$};
\draw [gray,dashed] (-4,6.5139) -- (4,6.5139);
\draw (-4,6.5139) node[anchor=east] {$5\beta$};
\draw [gray,dashed] (-4,7.8167) -- (4,7.8167);
\draw (-4,7.6167) node[anchor=east] {$6\beta$};
\draw[very thick,->] (-4,0) -- (-4,8.8167);
\draw (-4,8.5167) node[anchor=east] {$n_2$};
\draw [gray,dashed] (-2,0) -- (-2,7.8167);
\draw [gray,dashed] (0,0) -- (0,7.8167);
\draw [gray,dashed] (2,0) -- (2,7.8167);
\draw [gray,dashed] (4,0) -- (4,7.8167);
\draw [blue] (-4,1.3028) -- (-2.6972,0);
\draw [blue] (-2,1.3028) -- (-.6972,0);
\draw [blue] (0,1.3028) -- (1.3028,0);
\draw [blue] (2,1.3028) -- (3.3028,0);
\draw [blue] (-4,2.6056) -- (-1.3944,0);
\draw [blue] (-2,2.6056) -- (.6056,0);
\draw [blue] (0,2.6056) -- (2.6056,0);
\draw [blue] (-4,3.9083) -- (-.0917,0);
\draw [blue] (-2,3.9083) -- (1.90832,0);
\draw [blue] (0,3.9083) -- (3.9083,0);
\draw [blue] (-4,5.2111) -- (1.2111,0);
\draw [blue] (-2,5.2111) -- (3.2111,0);
\draw [blue] (-4,6.5139) -- (2.5139,0);
\draw [blue] (-4,7.8167) -- (3.8167,0);
\draw[blue,->] (-4,0) -- (-4,-.95);
\draw[blue,->] (-2,0) -- (-2,-.95);
\draw[blue,->] (0,0) -- (0,-.95);
\draw[blue,->] (2,0) -- (2,-.95);
\draw[blue,->] (4,0) -- (4,-.95);
\draw[blue,->] (-2.6972,0) -- (-2.6972,-.95);
\draw[blue,->] (-.6972,0) -- (-.6972,-.95);
\draw[blue,->] (1.3028,0) -- (1.3028,-.95);
\draw[blue,->] (3.3028,0) -- (3.3028,-.95);
\draw[blue,->] (-1.3944,0) -- (-1.3944,-.95);
\draw[blue,->] (.6056,0) -- (.6056,-.95);
\draw[blue,->] (2.6056,0) -- (2.6056,-.95);
\draw[blue,->] (-.0917,0) -- (-.0917,-.95);
\draw[blue,->] (1.90832,0) -- (1.90832,-.95);
\draw[blue,->] (3.90832,0) -- (3.90832,-.95);
\draw[blue,->] (1.2111,0) -- (1.2111,-.95);
\draw[blue,->] (3.2111,0) -- (3.2111,-.95);
\draw[blue,->] (2.5139,0) -- (2.5139,-.95);
\draw[blue,->] (3.8167,0) -- (3.8167,-.95);
\draw[very thick,red,->] (-4,-1) -- (5,-1);
\draw (-4,-1) node[anchor=north] {$0$};
\draw (-2,-1) node[anchor=north] {$1$};
\draw (0,-1) node[anchor=north] {$2$};
\draw (2,-1) node[anchor=north] {$3$};
\draw (4,-1) node[anchor=north] {$4$};
\draw (4.7,-1) node[anchor=north] {$i$};
\end{tikzpicture}
\caption{Schematic of the admissible exponents $i \in K_{N_0}$ (cf.~\eqref{def_kn}) in the generalized power series \eqref{expansion} of the solution $u$ up to $i \le 4$.}
\label{fig:power}
\end{figure}
Actually, we are able to prove further regularity properties for the coefficients $u_i(t)$ and the correction $R_{N_0} = R_{N_0}(x,t)$ and also interior-regularity bounds, the presentation of which we postpone to later sections (cf. Lemma~\ref{lem:est_coeff} and Theorem~\ref{th:regularity}). Note that in \cite{ggko.2014} it was only proven that  $u(t,x) = u_0(t) + u_\beta(t) x^\beta + o(x^\beta)$ as $x \searrow 0$ almost everywhere, where $u_0 = u_0(t)$ is bounded and continuous with $u_0(t) \to 0$ as $t \to \infty$ and $t^{\beta - \frac 1 2} u_\beta = t^{\beta - \frac 1 2} u_\beta(t)$ is square integrable with $u_\beta(t) = o\left(t^{\frac 1 2 - \beta}\right)$ as $t \to \infty$ for a subsequence.

\subsection{Discussion}
The result captures the regularizing effect of the degenerate-parabolic equation \eqref{pde}. Unlike in the case of the porous-medium equation
\[
\partial_t h - \partial_z^2 h^m = 0 \quad \mbox{for } \, t > 0 \, \mbox{ and } \, z \in \{h > 0\},
\]
with a constant $m > 1$ -- the second-order counterpart of \eqref{pde} for which a comparison principle holds and solutions become smooth for positive times \cite{a.1988,dh.1998,k.1999} -- here the solution only becomes smooth in the generalized sense \eqref{expansion}.

\medskip

We emphasize that in the case of partial wetting, that is, without loss of generality $\verti{\partial_z h} = 1$ at $z = Z_0(t)$, Kn\"upfer found that generically such singular expansions do appear as well: The solutions turn out to have a polynomial expansion in $x$ and $x \ln x$ for $n = 2$ \cite{k.2011}, and $x$ and $x^{3-n}$ for $n \in \left(0,\frac{14}{5}\right) \setminus \{1,2,\frac 5 2, \frac 8 3, \frac{11}{4}\}$ \cite{k.2012,k.2012.err} (cf.~also \cite{bgk.2016} for a discussion of source-type self-similar solutions with nonzero dynamic contact angle). In these cases, however, Kn\"upfer assumes such an expansion already for the initial data which also need to fulfill additional compatibility conditions. Hence in this case there is no proof available that the solution acquires additional regularity at the contact line for positive times, except for a smoothing effect in \cite[Cor.~4.3]{k.2012,k.2012.err} if one waits sufficiently long. This is due to the different techniques in \cite{k.2011,k.2012} relying on the Mellin transform and a suitable subtraction of the singular expansion multiplied with a cut off at $x = \infty$: The resulting estimates contain terms with different scaling in $x$ and consequently have no distinct scaling in time $t$. Thus it seems impossible to introduce time weights in order to capture the smoothing effect of the (non-)linear equation in this setting. On the other hand, it is well-known in the theory of non-degenerate parabolic equations (in domains with smooth boundary for instance) that solutions typically become smooth for positive times without having to assume well-prepared initial data.

\medskip

This (generalized) smoothing effect is the essentially new insight of Theorem~\ref{th:main}. The fact that we need control of an increasing number $k+6$ of logarithmic derivatives $D$ the larger $N_0$, essentially amounts to having sufficient interior regularity of the initial data. Since the degeneracy of \eqref{pde} is immaterial away from $x = 0$, the interior regularity of problem~\eqref{cauchy} can be treated by standard theory.

\medskip

We further remark that $\eps$ has to be chosen dependent on $N_0$. With our method it appears to be unavoidable to assume this dependence\footnote{As we will observe in Section~\ref{sec:linear}, the coercivity constant of $p(D-N_0)$ with respect to the inner product $(\cdot,\cdot)_\alpha$ vanishes as $\alpha \nearrow N_0$ or $\alpha \searrow N_0-1$, and as $N_0 \to \infty$ the admissible exponents in the interval $(N_0-1,N_0)$ become dense. Thus the constant in the maximal-regularity estimate \eqref{mr_main} blows up as $N_0 \to \infty$.}, so that we cannot determine whether the series $\sum_{i \in K_\infty} u_i(t) x^i$ converges.

\medskip

We also notice that there are connections of this work with the theory of elliptic boundary problems in domains with isolated point singularities at the boundary, for which it is known that singular expansions of solutions occur \cite{kmr.1997}. This is not surprising, as the underlying fluid model, the Stokes (or Navier-Stokes) system, has to be solved on a moving infinite-cusp domain. In the partial wetting case, instead, a moving infinite wedge may be considered and in fact, Kn\"upfer's analysis in \cite{k.2011,k.2012} strongly relies on this analogy.
\subsection{Transformation into the original set of variables}
In \cite[Rm.~3.2, Rm.~3.3, App.~A]{ggko.2014} it was explained that analogous leading-order expansions also hold for the function $h$ and the velocity $V$ (the vertically-averaged horizontal velocity within lubrication theory, cf.~\eqref{continuity_eq}). Indeed, in the expression $V = h \partial_z^3 h$ for the velocity (cf.~\eqref{def_vel}), we may employ the hodograph transform \eqref{hodograph}, i.e., $h = x^{\frac 3 2}$ as well as $\partial_z = F \partial_x$ (cf.~\eqref{tfe_trafo_0}\&\eqref{trafo}), so that
\begin{subequations}\label{vel_mult}
\begin{equation}\label{velocity}
V = \frac 3 2 x^{\frac 3 2} F \partial_x F \partial_x F x^{\frac 1 2} = \tilde \MM(F,F,F)
\end{equation}
with
\begin{equation}\label{multi_vel}
\tilde \MM(F_1,F_2,F_3) := \frac 3 2 F_1 \left(D - \frac 1 2\right) F_2 \left(D + \frac 1 2\right) F_3.
\end{equation}
\end{subequations}
Equations~\eqref{vel_mult} can be used to derive
\begin{subequations}\label{vel_non}
\begin{equation}
V = A(t) + B(t,x) + C(t,x)
\end{equation}
with
\begin{align}
A &= - \frac 3 8 (1+u_0)^3, \\
B &= 3 (1+u_0)^2 \tilde\MM_\mathrm{sym}(u-u_0,1,1) = \frac 3 2 (1+u_0)^2 \tilde p(D) (1+u_0) \quad \mbox{where } \, \tilde p(\zeta) = \left(\zeta+\beta+\frac 1 2\right) (\zeta-\beta), \\
C &= 3 (1+u_0) \tilde\MM_\mathrm{sym}(u-u_0,u-u_0,1) + \tilde\MM_\mathrm{sym}(u-u_0,u-u_0,u-u_0).
\end{align}
\end{subequations}
With help of expansion~\eqref{expansion}, equations~\eqref{vel_non} upgrade to
\begin{equation}\label{expansion_v}
V(t,Z(t,x)) = - \frac 3 8 \left(1+u_0(t)\right)^3 + \sum_{\substack{i \in K_{N_0}\\ 1 \le i < N_0}} V_i(t) x^i + O\left(x^{N_0}\right) \quad \mbox{as } \, x \searrow 0 \, \mbox{ and } \, t > 0,
\end{equation}
where the coefficients $V_i(t)$ and the correction $O\left(x^{N_0}\right)$ fulfill decay estimates as in \eqref{decay_as}. From \eqref{expansion_v} we can also read off the velocity $V_0(t)$ and the position $Z_0(t)$ of the contact line as
\[
V_0(t) = \frac 3 8 \left(1+u_0(t)\right)^3 \qquad \mbox{and} \qquad Z_0(t) = z_0 - \frac 3 8 \int_0^t \left(1 + u_0\left(t^\prime\right)\right)^3 \, \d t^\prime,
\]
where $z_0 \in \R$ is a free parameter.

\medskip

Finally, we can derive an expression for the expansion of the film height $h$ and the velocity $V$ in the vicinity of the contact line in terms of the original variables $t$ and $z$: Employing \eqref{expansion} in \eqref{trafo} in conjunction with \eqref{trafo_2}, we conclude
\[
Z_x = \left(1+u_0(t)\right)^{-1} + \sum_{\substack{i \in K_{N_0}\\ \beta \le i < N_0}} (Z_x)_i(t) x^i + O\left(x^{N_0}\right) \quad \mbox{as } \, x \searrow 0 \, \mbox{ and } \, t > 0,
\]
where the coefficients and remainder fulfill estimates analogous to \eqref{decay_as}. Integration of this expansion gives
\begin{eqnarray}\nonumber
\tilde x &:=& Z(t,x) - Z_0(t) \\
&=& x \left(\left(1+u_0(t)\right)^{-1} + \sum_{\substack{i \in K_{N_0}\\ \beta \le i < N_0}} (1+i)^{-1} (Z_x)_i(t) x^i + O\left(x^{N_0}\right)\right) \quad \mbox{as } \, x \searrow 0 \, \mbox{ and } \, t > 0. \label{def_tx}
\end{eqnarray}
Inversion yields
\begin{equation}\label{exp_x}
x = \left(1+u_0(t)\right) \tilde x \left(1 + \sum_{\substack{i \in K_{N_0}\\ \beta \le i < N_0}} c_i(t) \tilde x^i + O\left(\tilde x^{N_0}\right)\right) \quad \mbox{as } \, \tilde x \searrow 0 \, \mbox{ and } \, t > 0,
\end{equation}
with coefficients $c_i(t)$ and remainder $O\left(\tilde x^{N_0}\right)$ obeying estimates analogous to those in \eqref{decay_as}. Utilizing expansion~\eqref{exp_x} in the hodograph transform \eqref{hodograph}, we end up with
\[
h(t,z) = \tilde x^{\frac 3 2} \left(1 + \sum_{i \in K_N} \tilde u_i(t) \tilde x^i + O\left(\tilde x^{N_0}\right)\right) \quad \mbox{as } \, \tilde x \searrow 0 \, \mbox{ and } \, t > 0,
\]
where the $\tilde u_i(t)$ are (at least) continuous in time $t$, $\tilde x := z - Z_0(t)$ (cf.~\eqref{def_tx}), and decay estimates as in \eqref{decay_as} for the $\tilde u_i(t)$ and the remainder $O\left(\tilde x^{N_0}\right)$ hold true. We leave it up to the reader to directly relate the coefficients $\tilde u_i(t)$ to the $u_i(t)$ appearing in \eqref{expansion}. Furthermore, we can also derive an expansion for the velocity $V = V(t,z)$ by using \eqref{exp_x} in \eqref{expansion_v}:
\[
V(t,z) = - \frac 3 8 \left(1+u_0(t)\right)^3 + \sum_{\substack{i \in K_{N_0}\\ 1 \le i < N_0}} \tilde V_i(t) \tilde x^i + O\left(\tilde x^{N_0}\right) \quad \mbox{as } \, \tilde x \searrow 0 \, \mbox{ and } \, t > 0,
\]
where again $\tilde x := z - Z_0(t)$  and the $\tilde V_i(t)$ and the remainder term $O\left(\tilde x^{N_0}\right)$ meet decay estimates as in \eqref{decay_as}. We remark that it is open, whether a similar expansion for the velocity occurs for the (Navier-)Stokes equations on a moving infinite-cusp domain.

\subsection{Outline}
The paper consists of two parts: the linear theory (discussed in Section~\ref{sec:linear}) and the nonlinear theory (contained in Section~\ref{sec:nonlinear}). After recalling some of the basic notions on coercivity and maximal regularity in Section~\ref{ssec:coerc}, Section~\ref{ssec:formal} deals with the formal structure of the linear degenerate-parabolic equation \eqref{lin_cauchy}. Here we demonstrate that by applying an appropriate combination of \emph{scaling-invariant} operators $D-\gamma$ with $\gamma \in \R$ to the linear equation \eqref{lin_pde}, we are able to derive maximal-regularity estimates for \eqref{lin_cauchy} that control the singular expansion of $u$ at $x = 0$ to arbitrary orders (cf.~Section~\ref{ssec:heur_par}). Since all higher-order equations for $u$ are \emph{scaling-invariant}, the corresponding maximal-regularity estimates have a distinct scaling in $x$. Thus, when multiplied with an appropriate time weight, we can combine them to a quasi scaling-invariant maximal-regularity estimate for $u$ in terms of the initial data $u^{(0)}$ in the \emph{quasi-minimal} norm \eqref{norm_initial} and the right-hand side $f$. These arguments are made rigorous in Sections~\ref{ssec:func} and \ref{ssec:rigorous} (cf.~Proposition~\ref{prop:main_lin}) without going into all details. In particular Section~\ref{ssec:rigorous} is not essential for the understanding of the main ideas.

\medskip

Finally, in Section~\ref{sec:nonlinear} we prove our main regularity result, Theorem~\ref{th:regularity}, from which Theorem~\ref{th:main} follows as a special case. The proof strategy is standard and requires two ingredients: maximal-regularity estimates for the linearized problem \eqref{lin_cauchy} and the factorization of the nonlinearity $\NN(u)$ (cf.~\eqref{nonlinearity}) given in Proposition~\ref{prop:nonlinear_est}. The latter is in fact the non-trivial part of the proof (cf.~Section~\ref{ssec:nonest}) and requires detailed estimates that rely on the symmetry properties of the multi-linear form $\MM$ (cf.~\eqref{5linear}).

\section{The linear problem\label{sec:linear}}
\subsection{Coercivity and parabolic maximal regularity\label{ssec:coerc}}
Again, we briefly repeat some of the linear theory in \cite{ggko.2014}. Consider the linear problem
\begin{subequations}\label{lin_cauchy_alt}
\begin{align}
x \partial_t u + P(D) u &= f \quad \mbox{for } \, t, x > 0, \label{lin_pde_alt}\\
u_{|t = 0} &= u^{(0)},
\end{align}
\end{subequations}
which is structurally the same as \eqref{lin_cauchy} but with a general fourth-order polynomial $P(\zeta)$. We assume that the zeros $\gamma_1\le \cdots \le \gamma_4$ of $P(\zeta)$ are real. Then we know from \cite[Prop.~5.3]{ggko.2014} that there is a range of weights $\alpha$ -- which we may call \emph{coercivity range} -- such that $P(D)$ is formally coercive, i.e.,~$(u,P(D)u)_\alpha \gtrsim_\alpha \verti{u}_{2,\alpha}^2$ for all $u \in C_0^\infty((0,\infty))$. A sufficient criterion (which can be elementarily computed) is
\begin{equation}\label{coerc_cond}
\alpha \in (-\infty, \gamma_1) \cap (\gamma_2,\gamma_3) \cap (\gamma_4,\infty) \quad \mbox{and} \quad (\alpha-m(\gamma))^2 \le \frac{\sigma^2(\gamma)}{3},
\end{equation}
where $m(\gamma) := \frac 1 4 \sum_{j = 1}^4 \gamma_j$ (mean of the zeros $\gamma_j$) and $\sigma^2(\gamma) := \frac 1 4 \sum_{j = 1}^4 (\gamma_j - m(\gamma))^2$ (variance of the zeros $\gamma_j$). In the particular case of $P(D) = p(D)$, \eqref{coerc_cond} yields that the coercivity range contains the interval $(-1,0)$.

\medskip

Now suppose that $\alpha \in \R$ is in the coercivity range of $P(D)$. Then at least formally by quite elementary arguments (cf.~\cite[Sec.~2, Sec.~7.1]{ggko.2014}), we can derive a differential version of a maximal-regularity estimate for \eqref{lin_pde_alt} that reads
\begin{equation}\label{maxreg_diff}
\frac{\d}{\d t} \verti{u}_{\ell+2,\alpha-\frac 1 2}^2 + \verti{\partial_t u}_{\ell,\alpha-1}^2 + \verti{u}_{\ell+4,\alpha}^2 \lesssim_{\ell,\alpha} \verti{f}_{\ell,\alpha}^2, \quad \mbox{where } \, \ell \in \N_0.
\end{equation}
Indeed, in \eqref{maxreg_diff} derivatives $f, \cdots, D^\ell f$ control $u, \cdots, D^{\ell+4} u$ in the same norm, which is the maximal control in space one can expect as \eqref{lin_pde_alt} is fourth-order in $D$. The additional control of the time derivative $\partial_t u, \cdots, D^\ell \partial_t u$ (with reduced weight due to the degeneracy in \eqref{lin_cauchy_alt}) can be obtained by using control of $u, \cdots, D^{\ell+4} u$ and the fact that \eqref{lin_pde_alt} is fulfilled. This also yields control of the trace $\frac{\d}{\d t} \verti{u}_{\ell+2,\alpha-\frac 1 2}^2$ by interpolation. We refer to Section~\ref{ssec:rigorous} for more details.

\medskip

By multiplying \eqref{maxreg_diff} with a time weight $t^\sigma$ (where $\sigma \ge 0$), we obtain the integrated version of \eqref{maxreg_diff}, i.e.,
\begin{equation}\label{maxreg_int}
\begin{aligned}
&\sup_{t \ge 0} t^{2 \sigma} \verti{u}_{\ell+2,\alpha-\frac 1 2}^2 + \int_0^\infty t^{2 \sigma} \left(\verti{\partial_t u}_{\ell,\alpha-1}^2 + \verti{u}_{\ell+4,\alpha}^2\right) \d t \\
& \quad \lesssim_{\ell,\alpha} \delta_{\sigma,0} \verti{u^{(0)}}_{\ell+2,\alpha-\frac 1 2}^2 + \int_0^\infty t^{2 \sigma} \verti{f}_{\ell,\alpha}^2 \d t + 2 \sigma \int_0^\infty t^{2 \sigma - 1} \verti{u}_{\ell+2,\alpha-\frac 1 2}^2 \d t, \quad \mbox{where } \, \ell \in \N_0.
\end{aligned}
\end{equation}
The purpose of introducing time weights is two-fold: On the one hand they will enable us to prove the decay estimates as $t \to \infty$ in \eqref{decay_as} for the coefficients $u_i$ and the remainder $R_{N_0}$. While this would be irrelevant on a finite time interval, they also make it possible to prove regularity immediately after time $t = 0$, whereas without them the time after which regularity is obtained is unknown (cf.~\cite[Cor.~4.3]{k.2012,k.2012.err} for a similar case).

\medskip

Estimate~\eqref{maxreg_int} will be the basis of all linear estimates in the sequel.

\subsection{The formal structure of the linear equation\label{ssec:formal}}
As pointed out in \cite[Sec.~2]{ggko.2014}, just applying maximal regularity of the form \eqref{maxreg_int} with $\sigma = 0$ to the linear equation \eqref{lin_pde} is not sufficient in order to obtain well-posedness of the corresponding nonlinear problem \eqref{cauchy}. This is so, since only negative weights $\alpha$ are admissible (viz.~in the coercivity range of $p(D)$) and hence no control of the boundary value $u_0$ or the sup-norm $\sup_{t,x > 0} \verti{u(t,x)}$ can be achieved. On the other hand, as products of up to five factors in $\{u, D u, \cdots, D^4 u\}$ appear in the nonlinearity $\NN(u)$ (cf.~\eqref{5linear}~and~\eqref{nonlinearity}), the control of $\sup_{t,x > 0} \verti{u(t,x)}$ appears to be necessary for proving well-posedness by a contraction argument. In order to circumvent this problem, it was convenient to apply $p(D-1)$ to the linear equation \eqref{lin_pde} that, by using the commutation relation $D x = x (D+1)$, transforms into
\begin{equation}\label{lin_v1}
x \partial_t v^{(1)} + p(D-1) v^{(1)} = g^{(1)} \quad \mbox{for } \, t, x > 0,
\end{equation}
where $v^{(1)} := p(D) u$ and $g^{(1)} := p(D-1) f$. Since the coercivity range has translated to the positive interval $(0,1)$ (cf.~e.g.~\eqref{coerc_cond}), one obtains better control on the regularity of $v$ at the boundary $x = 0$. This is not surprising as in view of \eqref{expansion} we expect $u(t,x) = u_0(t) + u_\beta(t) x^\beta + O(x)$ as $x \searrow 0$ and $t > 0$ and the powers $x^0$ and $x^\beta$ are in the kernel of $p(D)$, whence $v^{(1)}(t,x) = O(x)$ as $x \searrow 0$ and $t > 0$. Furthermore, by compatibility $f(t,x) = O(x)$ as $x \searrow 0$ and $t > 0$ and therefore also $g^{(1)}(t,x) = O(x)$ as $x \searrow 0$ and $t > 0$. As a second step one can then retrieve regularity information on $u$ from regularity information on $v^{(1)}$ using elliptic estimates for the operator $p(D)$ (cf.~\cite[Sec.~2, Lem.~7.2]{ggko.2014}).

\medskip

Before reviewing the arguments, we point out the limitations of the ansatz: As a natural second step, we apply the operator $p(D-2)$ to equation \eqref{lin_v1} and obtain
\begin{equation}\label{lin_v2}
x \partial_t v^{(2)} + p(D-2) v^{(2)} = g^{(2)} \quad \mbox{for } \, t, x > 0,
\end{equation}
with $v^{(2)} := p(D-1) v^{(1)}$ and $g^{(2)} := p(D-2) g^{(1)}$. Following the argumentation above and noting that the coercivity range of the operator $p(D-2)$ contains the interval $(1,2)$, we apply the maximal-regularity estimate \eqref{maxreg_int} to \eqref{lin_v2} and seemingly obtain even better control on the boundary regularity of $v^{(2)}$ which formally suggests $v(t,x) = O\left(x^{2-\eps}\right)$ as $x \searrow 0$ and $t > 0$, where $\eps > 0$ is arbitrarily small. Apparently, such a claim is too strong as generically also the term $x^{2 \beta}$ appears in the expansion of $u$ (cf.~Section~\ref{ssec:regularity}), this term is not in the kernel of $p(D-1) p(D)$, and thus in general $v(t,x) = O\left(x^{2\beta}\right)$ as $x \searrow 0$ and $t > 0$.

\medskip

In order to work around this problem, we set
\[
I_2 := \left\{n_1 + \beta n_2: \, (n_1,n_2) \in \N_0^2, \; 1 < n_1 + \beta n_2 < 2\right\} \setminus \{1 + \beta\} = \{2 \beta, 3 \beta\}
\]
and apply the operator $\prod_{i \in I_2} (D-i) = (D-2\beta) (D-3\beta)$ to \eqref{lin_v2}. Setting $w^{(2)} := \left(\prod_{i \in I_2} (D-i)\right) v^{(2)}$ and $r^{(2)} := \left(\prod_{i \in I_2} (D-i)\right) g^{(2)}$, equation~\eqref{lin_v2} turns into
\begin{equation}\label{lin_w2}
x \partial_t w^{(2)} + p(D-2) w^{(2)} = r^{(2)} + x q_2(D) \partial_t v^{(2)} \quad \mbox{for } \, t, x > 0,
\end{equation}
where $q_2(D) = \prod_{i \in I_2} (D-i) - \prod_{i \in I_2} (D-i+1) = - 2 D + 5 \beta - 1$ is a polynomial of degree $\verti{I_2}-1 = 1$. The additional term $x q_2(D) \partial_t v^{(2)}$ appears as the commutator of $\prod_{i \in I_2} (D-i)$ and the multiplication with $x$ does not vanish. Nevertheless, \eqref{lin_w2} is structurally advantageous compared to \eqref{lin_v2} as now we may indeed expect $w^{(2)}(t,x) = O(x^2)$, $r^{(2)}(t,x) =O(x^2)$, and $x q_2(D) \partial_t v^{(2)}(t,x) = o(x^2)$ as $x \searrow 0$ and $t > 0$. Hence applying the maximal-regularity estimate \eqref{maxreg_int} with $\alpha \in (1,2)$, we obtain better control on $w^{(2)}$ (assuming $\sigma > 0$):
\begin{equation}\label{mr_w2}
\begin{aligned}
&\sup_{t \ge 0} t^{2 \sigma} \verti{w^{(2)}}_{\ell+2,\alpha-\frac 1 2}^2 + \int_0^\infty t^{2 \sigma} \left(\verti{\partial_t w^{(2)}}_{\ell,\alpha-1}^2 + \verti{w^{(2)}}_{\ell+4,\alpha}^2\right) \d t \\
& \quad \lesssim_{\ell,\alpha} \int_0^\infty t^{2 \sigma} \verti{r^{(2)}}_{\ell,\alpha}^2 \d t + 2 \sigma \int_0^\infty t^{2 \sigma - 1} \verti{w^{(2)}}_{\ell+2,\alpha-\frac 1 2}^2 \d t + \int_0^\infty t^{2 \sigma} \verti{q_2(D) \partial_t v^{(2)}}_{\ell, \alpha-1}^2 \d t.
\end{aligned}
\end{equation}

Yet, the additional term $\int_0^\infty t^{2 \sigma} \verti{q_2(D) \partial_t v^{(2)}}_{\ell, \alpha-1}^2 \d t$ in \eqref{mr_w2} has to be treated: Since $\alpha-1 \in (0,1)$ is in the coercivity range of $p(D-1)$, we can apply maximal regularity of the form \eqref{maxreg_int} to the \emph{time-differentiated} version of \eqref{lin_v1}, i.e.,
\begin{equation}\label{lin_tv1}
x \partial_t^2 v^{(1)} + p(D-1) \partial_t v^{(1)} = \partial_t g^{(1)} \quad \mbox{for } \, t, x > 0,
\end{equation}
which leaves us with
\begin{equation}\label{mr_tv1}
\begin{aligned}
& \sup_{t \ge 0} t^{2 \sigma} \verti{\partial_t v^{(1)}}_{\ell^\prime+2,\alpha-\frac 3 2}^2 + \int_0^\infty t^{2 \sigma} \left(\verti{\partial_t^2 v^{(1)}}_{\ell^\prime,\alpha-2}^2 + \verti{\partial_t v^{(1)}}_{\ell^\prime+4,\alpha-1}^2\right) \d t \\
& \quad \lesssim_{\ell^\prime,\alpha} \int_0^\infty t^{2 \sigma} \verti{\partial_t g^{(1)}}_{\ell^\prime,\alpha-1}^2 \d t + 2 \sigma \int_0^\infty t^{2 \sigma - 1} \verti{\partial_t v^{(1)}}_{\ell^\prime+2,\alpha-\frac 3 2}^2 \d t.
\end{aligned}
\end{equation}
Assuming $\ell^\prime \ge \ell + \verti{I_2} - 5 = \ell - 3$ and noting that trivially $\verti{q_2(D) \partial_t v^{(2)}}_{\ell, \alpha-1}^2 \lesssim \verti{\partial_t v^{(1)}}_{\ell^\prime+4,\alpha}^2$, the combination of \eqref{mr_w2} and \eqref{mr_tv1} yields
\begin{equation}\label{mr_w2_tv1}
\begin{aligned}
&\sup_{t \ge 0} t^{2 \sigma} \left(\verti{w^{(2)}}_{\ell+2,\alpha-\frac 1 2}^2 + \verti{\partial_t v^{(1)}}_{\ell^\prime+2,\alpha-\frac 3 2}^2\right) \\
&+ \int_0^\infty t^{2 \sigma} \left(\verti{\partial_t w^{(2)}}_{\ell,\alpha-1}^2 + \verti{w^{(2)}}_{\ell+4,\alpha}^2 + \verti{\partial_t^2 v^{(1)}}_{\ell^\prime,\alpha-2}^2 + \verti{\partial_t v^{(1)}}_{\ell^\prime+4,\alpha-1}^2\right) \d t \\
& \quad \lesssim_{\ell,\ell^\prime,\alpha} \int_0^\infty t^{2 \sigma} \left(\verti{r^{(2)}}_{\ell,\alpha}^2 + \verti{\partial_t g^{(1)}}_{\ell^\prime,\alpha-1}^2\right) \d t \\
& \qquad\qquad + 2 \sigma \int_0^\infty t^{2 \sigma - 1} \left(\verti{w^{(2)}}_{\ell+2,\alpha-\frac 1 2}^2 + \verti{\partial_t v^{(1)}}_{\ell^\prime+2,\alpha-\frac 3 2}^2 \d t\right) \d t.
\end{aligned}
\end{equation}
Now the solutions $w^{(2)}$ and $v^{(1)}$ are estimated in sufficiently strong norms by $\partial_t g^{(1)}$ and $r^{(2)}$ in respective norms and the integral
\begin{equation}\label{remainder1}
\int_0^\infty t^{2 \sigma - 1} \left(\verti{w^{(2)}}_{\ell+2,\alpha-\frac 1 2}^2 + \verti{\partial_t v^{(1)}}_{\ell^\prime+2,\alpha-\frac 1 2}^2 \right) \d t.
\end{equation}
The norms in \eqref{remainder1} have reduced spatial and temporal weights and thus it is possible to absorb these terms by lower-order estimates\footnote{Here ``lower order" is meant in the sense of using norms with lowered weights and the same number of time derivatives.}. We will detail the arguments in Section~\ref{ssec:heur_par}. Notably it seems unavoidable to combine higher-regularity estimates in space with higher-regularity estimates in time as opposed to the case of the thin-film equation \eqref{tfe_general} with $n = 1$, that is, the lubrication approximation of the Hele-Shaw cell \cite[Sec.~8, Sec.~9]{gko.2008}. This has already been observed by Kn\"upfer in \cite{k.2011,k.2012} for the partial wetting case using rather different techniques.

\medskip

Before addressing the issue of dealing with the lower-order terms in \eqref{mr_w2_tv1}, we will first systematize our observations: In order to obtain better control on the solution, we apply the operator $p(D-3)$ to equation \eqref{lin_w2}. Thus we arrive at
\begin{equation}\label{lin_v3}
x \partial_t v^{(3)} + p(D-3) v^{(3)} = g^{(3)} + x q_2(D) p(D-2) \partial_t v^{(2)} \quad \mbox{for } \, t, x > 0,
\end{equation}
where we have set $v^{(3)} := p(D-2) w^{(2)}$ and $g^{(3)} := p(D-3) r^{(2)}$. Again, we cannot expect to have $v^{(3)}(t,x) = O(x^3)$ as $x \searrow 0$ and $t > 0$ as the set
\[
I_3 := \left\{n_1 + \beta n_2: \, (n_1,n_2) \in \N_0^2, \; 2 < n_1 + \beta n_2 < 3\right\} \setminus \{2 + \beta\}
\]
is non-empty (cf.~Fig.~\ref{fig:power}). Applying $\prod_{i \in I_3} (D-i)$ to \eqref{lin_w2}, we obtain
\begin{equation}\label{lin_w3_0}
\begin{aligned}
& x \partial_t w^{(3)} + p(D-3) w^{(3)} \\
& \quad = r^{(3)} + x \tilde q_3(D) \partial_t v^{(3)} + x q_2(D) p(D-2) \left(\prod_{i \in I_3} (D-i+1)\right) \partial_t v^{(2)} \quad \mbox{for } \, t, x > 0,
\end{aligned}
\end{equation}
where again $w^{(3)} := \left(\prod_{i \in I_3} (D-i)\right) v^{(3)}$, $r^{(3)} := \left(\prod_{i \in I_3} (D-i)\right) g^{(3)}$, and $\tilde q_3(D)= \prod_{i \in I_3} (D-i) - \prod_{i \in I_3} (D-i+1)$ is a polynomial in $D$ of degree $\verti{I_3}-1$ that originates from the commutator of $x$ and $\prod_{i \in I_3} (D-i)$. Then we make the rather trivial observation $I_2 \subset I_3 - 1 := \{i-1: \, i \in I_3\}$ (cf.~Fig.~\ref{fig:power}). By exploiting
\begin{align*}
p(D-2) \left(\prod_{i \in I_3} (D-i+1)\right) \partial_t v^{(2)} &= \left(\prod_{i \in (I_3-1) \setminus I_2} (D-i)\right) p(D-2) \underbrace{\left(\prod_{i \in I_2} (D-i)\right) v^{(2)}}_{= w^{(2)}} \\
&= \left(\prod_{i \in (I_3-1) \setminus I_2} (D-i)\right) v^{(3)}
\end{align*}
and setting
\[
q_3(D) := \tilde q_3(D) + q_2(D) \prod_{i \in (I_3-1) \setminus I_2} (D-i),
\]
which is a polynomial of degree less or equal to $\verti{I_3}-1$, we can rewrite the last two terms in \eqref{lin_w3_0} and obtain:
\begin{equation}\label{lin_w3}
x \partial_t w^{(3)} + p(D-3) w^{(3)} = r^{(3)} + x q_3(D) \partial_t v^{(3)} \quad \mbox{for } \, t, x > 0.
\end{equation}
As \eqref{lin_w3} is structurally the same as \eqref{lin_w2}, this procedure can be iterated, and we arrive at the following set of equations
\begin{equation}\label{lin_wnm}
(x \partial_t  + p(D-n)) \partial_t^m w^{(n)} = \partial_t^m r^{(n)} + x q_n(D) \partial_t^{m+1} v^{(n)} \quad \mbox{for } \, t, x > 0,
\end{equation}
where we define the sets of indices
\begin{subequations}\label{def_jnin}
\begin{align}
I_n &:= \left\{n_1 + \beta n_2: \, (n_1,n_2) \in \N_0^2, \; n-1 < n_1 + \beta n_2 < n\right\} \setminus \{n-1+\beta\},\\
J_n &:= \left\{n_1 + \beta n_2: \, (n_1,n_2) \in \N_0^2, \; 0 < n_1 + \beta n_2 < n\right\} \setminus \left(\N \cup (\N_0 + \beta)\right) = \cup_{n^\prime = 1}^n I_{n^\prime}, \label{def_jn}
\end{align}
\end{subequations}
introduce the functions
\begin{subequations}\label{def_vwr}
\begin{align}
v^{(n)} &:= \left(\prod_{n^\prime = 0}^{n-1} p(D-n^\prime)\right) \left(\prod_{i \in J_{n-1}} (D-i)\right) u, \label{def_vn}\\
w^{(n)} &:= \left(\prod_{n^\prime = 0}^{n-1} p(D-n^\prime)\right) \left(\prod_{i \in J_n} (D-i)\right) u = \left(\prod_{i \in I_n} (D-i)\right) v^{(n)}, \label{def_wn}\\
r^{(n)} &:= \left(\prod_{n^\prime = 1}^n p(D-n^\prime)\right) \left(\prod_{i \in J_n} (D-i)\right) f, \label{def_rn}
\end{align}
\end{subequations}
and denote by $q_n(D)$ a polynomial of degree less or equal to $\verti{I_n}-1$. The numbers $n \in \N$ and $m \in \N_0$ are arbitrary.

\subsection{Heuristics for parabolic maximal regularity\label{ssec:heur_par}}
In this section we systematize the ideas of the previous section, leading to estimate~\eqref{mr_w2_tv1}. Throughout the section, all estimates may depend on $N$, $n$, $m$, $\alpha$, or $\delta$. Applying the maximal-regularity estimate \eqref{maxreg_int} to equation~\eqref{lin_w3}, we obtain
\begin{equation}\label{mr_wnm}
\begin{aligned}
&\sup_{t \ge 0} t^{2 (\alpha+n+m) - 3} \verti{\partial_t^m w^{(n)}}_{k(n,m,\alpha^\prime) + 2,\alpha^\prime + n - \frac 3 2}^2 \\
&+ \int_0^\infty t^{2 (\alpha+n+m) - 3} \left(\verti{\partial_t^{m+1} w^{(n)}}_{k(n,m,\alpha^\prime),\alpha^\prime+n-2}^2 + \verti{\partial_t^m w^{(n)}}_{k(n,m,\alpha^\prime)+4,\alpha^\prime+n-1}^2\right) \d t \\
& \quad \lesssim \delta_{2 (\alpha+n+m-1),1} \verti{\partial_t^m w^{(n)}_{| t = 0}}_{k(n,m,\alpha^\prime) + 2,\alpha^\prime + n - \frac 3 2}^2 + \int_0^\infty t^{2 (\alpha+n+m) - 3} \verti{\partial_t^m r^{(n)}}_{k(n,m,\alpha^\prime),\alpha^\prime+n-1}^2 \d t \\
& \qquad + (1-\delta_{n,1}) \int_0^\infty t^{2 (\alpha+n+m) - 3} \verti{\partial_t^{m+1} v^{(n)}}_{k(n,m,\alpha^\prime)+\verti{I_n}-1,\alpha^\prime+n-2}^2 \d t \\
& \qquad + (2 (\alpha+n+m) - 3) \int_0^\infty t^{2 (\alpha+n+m) - 4} \verti{\partial_t^m w^{(n)}}_{k(n,m,\alpha^\prime)+2,\alpha^\prime+n-\frac 3 2}^2 \d t,
\end{aligned}
\end{equation}
where we assume and use the following:
\begin{itemize}
\item[$\bullet$] We take a finite number of weights $\alpha \in [0,1]$ and we choose $\delta > 0$ sufficiently small such that
\begin{itemize}
\item[$\star$] $\alpha^\prime := \alpha \pm \delta \in (0,1)$ if $\alpha \in (0,1)$,
\item[$\star$] $\delta \in (0,1)$ and thus also $1-\delta \in (0,1)$.
\end{itemize}
Thus $\alpha^\prime$ is in the coercivity range of $p(D-1)$ (i.e.,~$\alpha+n-1 \in (n-1,n)$ is in the coercivity range of $p(D-n)$).
\item[$\bullet$] $\delta > 0$ has to be chosen small enough such that if $\alpha_1 < \alpha_2$, then also $\alpha_1 +\delta < \alpha_2 -\delta$. Further smallness conditions on $\delta$ will be specified when necessary.
\item[$\bullet$] We need to assume $2(\alpha+n+m)-3 \ge 0$ so that all time weights are integrable at $t = 0$ (the last term in \eqref{mr_wnm} vanishes for $2(\alpha+n+m)-3 = 0$).
\item[$\bullet$] The indices $k(n,m,\alpha) \in \N_0$, determining the number of $D$-derivatives in the norms appearing in \eqref{mr_wnm}, will be chosen later.
\end{itemize}
The specific choice of the weights $\alpha$ will be explained further below. Choosing proper weights turns out to be essential for obtaining control on the coefficients $u_i(t)$ of the generalized power series \eqref{expansion} of $u$ and for being able to absorb the last line of \eqref{mr_wnm}.

\medskip

Note that estimate~\eqref{mr_wnm} itself is insufficient as the solution $u$ still appears on the estimate's right-hand side in the last two lines.

\subsubsection{Absorption of remnant terms I}
We will first concentrate on absorbing the second but last line of \eqref{mr_wnm} by exploiting the additional time regularity: Therefore we start with estimate~\eqref{mr_wnm} with $n = N$ and $m = 0$. For $N > 1$, the term
\[
\int_0^\infty t^{2 (\alpha+N) - 3} \verti{\partial_t v^{(N)}}_{k(N,0,\alpha^\prime)+\verti{I_N}-1,\alpha^\prime+N-2}^2 \d t
\]
has to be absorbed. This term can be estimated by the left-hand side of \eqref{mr_wnm} for $n = N-1$ and $m = 1$, provided that the indices $k(n,m,\alpha^\prime)$ obey $k(N-1,1,\alpha^\prime) \ge k(N,0,\alpha^\prime) + \verti{I_N} - 1$. Then indeed
\[
\verti{\partial_t v^{(N)}}_{k(N,0,\alpha^\prime)+\verti{I_N}-1,\alpha^\prime+N-2} \lesssim \verti{\partial_t w^{(N-1)}}_{k(N-1,1,\alpha^\prime)+4,\alpha^\prime+N-2}.
\]
Next, supposed that $N > 2$, the term
\[
\int_0^\infty t^{2 (\alpha+N) - 3} \verti{\partial_t^2 v^{(N-1)}}_{k(N-1,1,\alpha^\prime)+\verti{I_{N-1}}-1,\alpha^\prime+N-3}^2 \d t
\]
has to be absorbed, which can be achieved by combining it with estimate~\eqref{mr_wnm} for $n = N - 2$ and $m = 2$, provided that $k(N-2,2,\alpha^\prime) \ge k(N-1,1,\alpha^\prime) + \verti{I_{N-1}} - 1$. Apparently, this procedure can be iterated (cf.~Fig.~\ref{fig:absorb1})
\begin{figure}[t]
\centering
\begin{tikzpicture}[scale=1]
\draw[very thick,->] (.5,1) -- (.5,5);
\draw[very thick,->] (1,.5) -- (5,.5);
\draw [gray] (.4,1) -- (.6,1);
\draw [gray] (1,.4) -- (1,.6);
\draw [gray] (.4,2) -- (.6,2);
\draw [gray] (2,.4) -- (2,.6);
\draw [gray] (.4,3) -- (.6,3);
\draw [gray] (3,.4) -- (3,.6);
\draw [gray] (.4,4) -- (.6,4);
\draw [gray] (4,.4) -- (4,.6);
\draw (.3,1) node[anchor=east] {$1$};
\draw (1,.3) node[anchor=north] {$0$};
\draw (.3,2) node[anchor=east] {$2$};
\draw (2,.3) node[anchor=north] {$1$};
\draw (.3,3) node[anchor=east] {$3$};
\draw (3,.3) node[anchor=north] {$2$};
\draw (.3,4) node[anchor=east] {$4$};
\draw (4,.3) node[anchor=north] {$3$};
\draw [gray] (.9,1) -- (1.1,1);
\draw [gray] (1,.9) -- (1,1.1);
\draw [gray] (.9,2) -- (1.1,2);
\draw [gray] (1,1.9) -- (1,2.1);
\draw [gray] (.9,3) -- (1.1,3);
\draw [gray] (1,2.9) -- (1,3.1);
\draw [gray] (.9,4) -- (1.1,4);
\draw [gray] (1,3.9) -- (1,4.1);
\draw [gray] (1.9,1) -- (2.1,1);
\draw [gray] (2,.9) -- (2,1.1);
\draw [gray] (1.9,2) -- (2.1,2);
\draw [gray] (2,1.9) -- (2,2.1);
\draw [gray] (1.9,3) -- (2.1,3);
\draw [gray] (2,2.9) -- (2,3.1);
\draw [gray] (1.9,4) -- (2.1,4);
\draw [gray] (2,3.9) -- (2,4.1);
\draw [gray] (2.9,1) -- (3.1,1);
\draw [gray] (3,.9) -- (3,1.1);
\draw [gray] (2.9,2) -- (3.1,2);
\draw [gray] (3,1.9) -- (3,2.1);
\draw [gray] (2.9,3) -- (3.1,3);
\draw [gray] (3,2.9) -- (3,3.1);
\draw [gray] (2.9,4) -- (3.1,4);
\draw [gray] (3,3.9) -- (3,4.1);
\draw [gray] (3.9,1) -- (4.1,1);
\draw [gray] (4,.9) -- (4,1.1);
\draw [gray] (3.9,2) -- (4.1,2);
\draw [gray] (4,1.9) -- (4,2.1);
\draw [gray] (3.9,3) -- (4.1,3);
\draw [gray] (4,2.9) -- (4,3.1);
\draw [gray] (3.9,4) -- (4.1,4);
\draw [gray] (4,3.9) -- (4,4.1);
\draw[blue,->] (1.1,3.9) -- (1.9,3.1);
\draw[blue,->] (2.1,2.9) -- (2.9,2.1);
\draw[blue,->] (3.1,1.9) -- (3.9,1.1);
\draw (.5,4.7) node[anchor=east] {$n$};
\draw (4.7,.5) node[anchor=north] {$m$};
\end{tikzpicture}
\caption{Schematic: absorption of remnant terms I. Each node $+$ corresponds to an estimate of the form \eqref{mr_wnm}. The displayed arrows visualize the absorption mechanism for $N = 4$, that is, the remnant at the base of the arrow (forming the second but last line in \eqref{mr_wnm}) is absorbed by the corresponding estimate \eqref{mr_wnm} at the tip of the arrow under the assumption that \eqref{same_alpha} holds true.}
\label{fig:absorb1}
\end{figure}
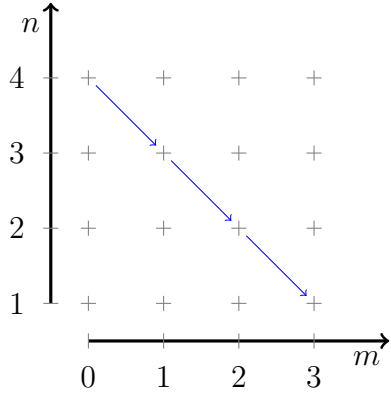
and by merely summing \eqref{mr_wnm} for indices
\[
(n,m) \in \left\{(N,0), (N-1,1), \cdots, (1,N-1)\right\},
\]
we end up with
\begin{equation}\label{wnm_sum}
\begin{aligned}
&\sup_{t \ge 0} t^{2 (\alpha+N) - 3} \sum_{m = 0}^{N-1} \verti{\partial_t^m w^{(N-m)}}_{k(N-m,m,\alpha^\prime) + 2,\alpha^\prime + N-m - \frac 3 2}^2 \\
&+ \int_0^\infty t^{2 (\alpha+N) - 3} \sum_{m = 0}^{N-1} \verti{\partial_t^{m+1} w^{(N-m)}}_{k(N-m,m,\alpha^\prime),\alpha^\prime+N-m-2}^2 \d t \\
&+ \int_0^\infty t^{2 (\alpha+N) - 3} \sum_{m = 0}^{N-1} \verti{\partial_t^m w^{(N-m)}}_{k(N-m,m,\alpha^\prime)+4,\alpha^\prime+N-m-1}^2 \d t \\
& \quad \lesssim \delta_{2 \alpha,1} \delta_{N,1} \verti{w^{(1)}_{| t = 0}}_{k\left(1,0,\frac 1 2 \pm \delta\right) + 2,\pm \delta}^2 + \int_0^\infty t^{2 (\alpha+N) - 3} \sum_{m = 0}^{N-1} \verti{\partial_t^m r^{(N-m)}}_{k(N-m,m,\alpha^\prime),\alpha^\prime+N-m-1}^2 \d t \\
& \qquad + (2 \alpha + 2 N - 3) \int_0^\infty t^{2 (\alpha+N) - 4} \sum_{m = 0}^{N-1} \verti{\partial_t^m w^{(N-m)}}_{k(N-m,m,\alpha^\prime)+2,\alpha^\prime+N-m-\frac 3 2}^2 \d t,
\end{aligned}
\end{equation}
provided that we have
\[
k(N-m-1,m+1,\alpha^\prime) \ge k(N-m,m,\alpha^\prime) + \verti{I_{N-m}} - 1 \quad \mbox{for } \, m = 0, \cdots, N-2
\]
and
\begin{equation}\label{inequ_an}
2 \alpha + 2 N - 3 \ge 0.
\end{equation}
For later purpose we require the stronger assumption
\begin{equation}\label{same_alpha}
k(N-m-1,m+1,\alpha^\prime) \ge k(N-m,m,\alpha^\prime) + \verti{I_{N-m}} \quad \mbox{for } \, m = 0, \cdots, N-2.
\end{equation}
%

\subsubsection{Absorption of remnant terms II}
Estimate~\eqref{wnm_sum} is still insufficient as yet the solution appears on the right-hand side of the estimate (forming the last line). The absorption of this last line is indeed more complicated and demands to specifically choose the weights $\alpha$. In order to understand the choices made, we first make additional comments on elliptic maximal regularity: In fact, one can get control on $u$ from control on the functions $w^{(n)}$.
\begin{proposition}\label{prop:elliptic}
Suppose that $k \in \N_0$, $\varrho \in \R \setminus K_\infty$, and $u: (0,\infty) \to \R$ is smooth satisfying
\begin{equation}\label{ell_assume}
D^\ell u(x) = \sum_{i \in K_\varrho} u_i i^\ell x^i + o(x^\varrho) \quad \mbox{as } \, x \searrow 0 
\end{equation}
for all $\ell = 0, \cdots, k + \verti{K_\varrho}$ (cf.~\eqref{def_kn} for the definition of $K_\varrho$). Then for any polynomial $Q(\zeta) = \prod_{\ell = 1}^m (\zeta - \zeta_\ell)$ with real zeros $\zeta_1, \cdots, \zeta_m$ such that $K_\varrho \subset \{\zeta_1,\cdots,\zeta_m\}$ and $\varrho \notin \{\zeta_1,\cdots,\zeta_m\}$, we have
\begin{equation}\label{elliptic_mr}
\verti{u - \sum_{i \in K_\varrho} u_i x^i}_{k+m,\varrho} \lesssim_{k,\varrho} \verti{Q(D) u}_{k,\varrho}.
\end{equation}
\end{proposition}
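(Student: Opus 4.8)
The plan is to combine a one-line algebraic observation with an elliptic (Fourier-multiplier) a priori estimate for the scaling-invariant operator $Q(D)$, and then to use the point-wise expansion \eqref{ell_assume} to identify which solution of the associated ordinary differential equation is being estimated.

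First I would reduce to a pure a priori estimate. Since $K_\varrho\subset\{\zeta_1,\dots,\zeta_m\}$, for every $i\in K_\varrho$ one of the factors of $Q(\zeta)=\prod_{\ell=1}^m(\zeta-\zeta_\ell)$ vanishes at $\zeta=i$, so $Q(D)x^i=Q(i)x^i=0$. Hence, with $\tilde u:=u-\sum_{i\in K_\varrho}u_i x^i$, one has $Q(D)\tilde u=Q(D)u$, and \eqref{elliptic_mr} is equivalent to $\verti{\tilde u}_{k+m,\varrho}\lesssim_{k,\varrho}\verti{Q(D)\tilde u}_{k,\varrho}$; we may assume the right-hand side finite, for otherwise there is nothing to prove. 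By \eqref{ell_assume}, $\tilde u$ is smooth on $(0,\infty)$ with $D^\ell\tilde u(x)=o(x^\varrho)$ as $x\searrow0$ for $\ell=0,\dots,k+\verti{K_\varrho}$.

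Next I would establish the a priori estimate in the logarithmic variable $s=\ln x$. Setting $\psi(s):=e^{-\varrho s}\tilde u(e^s)$ and $\gamma(s):=e^{-\varrho s}(Q(D)\tilde u)(e^s)$, one has $Q(\partial_s+\varrho)\psi=\gamma$, while $\verti{\tilde u}_{j,\varrho}\sim_\varrho\vertii{\psi}_{W^{j,2}(\R)}$ and $\verti{Q(D)\tilde u}_{k,\varrho}\sim_\varrho\vertii{\gamma}_{W^{k,2}(\R)}$ as recalled after \eqref{sobolev_norm}. The Fourier symbol of $Q(\partial_s+\varrho)$ is $Q(\mathrm{i}\xi+\varrho)=\prod_{\ell=1}^m(\mathrm{i}\xi+\varrho-\zeta_\ell)$, so that $\verti{Q(\mathrm{i}\xi+\varrho)}^2=\prod_{\ell=1}^m(\xi^2+(\varrho-\zeta_\ell)^2)\gtrsim_\varrho(1+\xi^2)^m$ because the $\zeta_\ell$ are real and all different from $\varrho$. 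By Plancherel there is therefore a unique $\psi_\star\in W^{k+m,2}(\R)$ with $Q(\partial_s+\varrho)\psi_\star=\gamma$ and $\vertii{\psi_\star}_{W^{k+m,2}(\R)}\lesssim_{k,\varrho}\vertii{\gamma}_{W^{k,2}(\R)}$; uniqueness in $L^2(\R)$ holds since the homogeneous solutions are the $s^je^{(\zeta_\ell-\varrho)s}$ ($j$ below the multiplicity of $\zeta_\ell$), none of which lies in $L^2(\R)$ as $\zeta_\ell\neq\varrho$. Undoing the substitution, let $\phi$ be defined by $e^{-\varrho s}\phi(e^s)=\psi_\star(s)$; then $Q(D)\phi=Q(D)\tilde u$, $\verti{\phi}_{k+m,\varrho}\lesssim_{k,\varrho}\verti{Q(D)\tilde u}_{k,\varrho}$, and $\phi$ is smooth on $(0,\infty)$ by bootstrapping the constant-coefficient ODE with its smooth right-hand side $\gamma$.

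The heart of the matter, and the step I expect to be the main obstacle, is then to show $\tilde u=\phi$. Both are smooth solutions on $(0,\infty)$ of $Q(D)w=Q(D)\tilde u$, so $w_0:=\tilde u-\phi$ lies in the finite-dimensional kernel of $Q(D)$, spanned by the $(\ln x)^jx^{\zeta_\ell}$ ($j$ below the multiplicity of $\zeta_\ell$). As $x\searrow0$ we have $\tilde u=o(x^\varrho)$ by the above and $\phi=o(x^\varrho)$ because $\verti{\phi}_{1,\varrho}<\infty$ (cf.\ the discussion after \eqref{sobolev_norm}); hence $w_0=o(x^\varrho)$, and since $\varrho\notin\{\zeta_1,\dots,\zeta_m\}$ this forces $w_0$ to be a combination of modes with $\zeta_\ell>\varrho$ only. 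At $x=\infty$, $\phi$ is square-integrable against $x^{-2\varrho}\frac{\d x}{x}$, and so is $\tilde u$ in the regime in which the proposition is applied (the finitely many subtracted powers $x^i$, $i\in K_\varrho$, are square-integrable against this measure near $x=\infty$ because $i<\varrho$, and $u$ itself is bounded there); thus $w_0$ is square-integrable against $x^{-2\varrho}\frac{\d x}{x}$ near $x=\infty$, which rules out any non-trivial combination of modes $x^{\zeta_\ell}$ with $\zeta_\ell>\varrho$. Therefore $w_0\equiv0$, $\tilde u=\phi$, and \eqref{elliptic_mr} follows. The hypothesis \eqref{ell_assume} is stated with control of $D^\ell u$ up to order $k+\verti{K_\varrho}$, which is more than the above uses; this count is the natural one for a more hands-on proof that peels off the factors $D-\zeta_\ell$ one at a time and integrates the resulting first-order ODEs (toward $x=0$ for roots below $\varrho$, toward $x=\infty$ for roots above $\varrho$), each peeling consuming one order of the expansion. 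Some care at $x=\infty$ is unavoidable, since \eqref{ell_assume} gives no information there and one invokes the decay of $u$ available in the applications, or else localizes near $x=0$ and estimates the tail separately.
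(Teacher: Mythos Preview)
Your approach is correct and takes a genuinely different route from the paper's. The paper does not spell out a proof here but points to Hardy's inequality (Lemma~\ref{lem:hardy}) as the key ingredient: one factors $Q(D)=\prod_\ell(D-\zeta_\ell)$ and peels off one factor at a time, each application of \eqref{hardy} gaining one derivative in the weighted norm. That iterative scheme is why the pointwise hypothesis \eqref{ell_assume} is stated for $\ell$ up to $k+\verti{K_\varrho}$ --- every peeling of a root $\zeta_\ell<\varrho$ consumes one order of the expansion at $x=0$ to verify the boundary condition in Lemma~\ref{lem:hardy}. Your Plancherel argument in the logarithmic variable is cleaner in that it produces the full gain of $m$ derivatives in a single step via the lower bound $\verti{Q(\mathrm{i}\xi+\varrho)}^2\gtrsim(1+\xi^2)^m$, and, as you note, uses only the $\ell=0$ case of \eqref{ell_assume} for the identification at $x=0$.

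Both routes share the same weak point, which you identify explicitly: the proposition as stated carries no information about $u$ at $x=\infty$, yet the identification of $\tilde u$ with the $L^2$-solution (in your language, $w_0\equiv0$) requires ruling out kernel modes $x^{\zeta_\ell}$ with $\zeta_\ell>\varrho$, which grow relative to the weight. Indeed $u=x^{\zeta}$ for any root $\zeta>\varrho$ satisfies \eqref{ell_assume} with all $u_i=0$ and has $Q(D)u=0$, while $\verti{u}_{k+m,\varrho}=\infty$ --- so the inequality cannot hold literally without an implicit a~priori finiteness of the left-hand side or decay information at infinity. The paper's Hardy route faces the identical obstruction: Lemma~\ref{lem:hardy} for a root $\gamma=\zeta_\ell>\varrho$ demands $w=o(x^\varrho)$ as $x\nearrow\infty$, which is likewise not furnished by the hypotheses. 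In practice the paper supplies this missing ingredient in the applications (see the approximation argument around \eqref{w2_est_v3} in the proof of Proposition~\ref{prop:main_lin}), exactly as you suggest. So your caveat is not a defect of your argument relative to the paper's; it reflects a genuine incompleteness in the proposition's stated hypotheses that both proofs inherit.
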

A proof in a similar case can be found in \cite[Lem.~7.2]{ggko.2014}. It is an immediate consequence of a version of Hardy's inequality:
\begin{lemma}\label{lem:hardy}
For any $w \in C^\infty((0,\infty))$, $\gamma, \varrho \in \R$ with $\gamma \ne \varrho$, $\verti{w}_{1,\varrho} < \infty$, and $w(x) = o(x^\varrho)$ as $x \searrow 0$ ($x \nearrow \infty$) if $\gamma < \varrho$ ($\gamma > \varrho$), we have
\begin{equation}\label{hardy}
\verti{w}_{1,\varrho} \lesssim_{\gamma,\varrho} \verti{(D-\gamma) w}_\varrho.
\end{equation}
\end{lemma}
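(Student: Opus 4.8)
\emph{Plan.} The statement reduces at once to the zeroth-order Hardy estimate
\[
\verti{w}_\varrho \;\lesssim_{\gamma,\varrho}\; \verti{(D-\gamma)w}_\varrho ,
\]
since then $\verti{Dw}_\varrho \le \verti{(D-\gamma)w}_\varrho + \verti{\gamma}\,\verti{w}_\varrho \lesssim_{\gamma,\varrho} \verti{(D-\gamma)w}_\varrho$, and so $\verti{w}_{1,\varrho}^2 = \verti{w}_\varrho^2 + \verti{Dw}_\varrho^2 \lesssim_{\gamma,\varrho} \verti{(D-\gamma)w}_\varrho^2$ with a constant depending only on $\gamma$ and $\varrho$. (If $\verti{(D-\gamma)w}_\varrho=\infty$ there is nothing to prove; under $\verti{w}_{1,\varrho}<\infty$ this quantity is finite and $w$ is smooth, so the manipulations below are legitimate.) To prove the displayed estimate I would pass to the logarithmic variable $s=\ln x$ and set $W(s):=w(e^s)$, $G(s):=\bigl((D-\gamma)w\bigr)(e^s)$, $\widetilde W(s):=e^{-\varrho s}W(s)$, $\widetilde G(s):=e^{-\varrho s}G(s)$. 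Since $D=\partial_s$ in this variable, the identity $G=(D-\gamma)w$ becomes the linear ODE $W'-\gamma W=G$ on $\R$, while $\verti{w}_\varrho = \vertii{\widetilde W}_{L^2(\R)}$ and $\verti{(D-\gamma)w}_\varrho = \vertii{\widetilde G}_{L^2(\R)}$ by \eqref{l2_norm}.

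\emph{Main step: solving the ODE in the right direction.} Writing the equation as $\bigl(e^{-\gamma\tau}W(\tau)\bigr)' = e^{-\gamma\tau}G(\tau)$, I would integrate in the direction singled out by the one-sided decay hypothesis so that the homogeneous solution $e^{\gamma s}$ drops out. If $\gamma<\varrho$, then $e^{-\gamma\tau}G(\tau)=e^{(\varrho-\gamma)\tau}\widetilde G(\tau)$ is absolutely integrable over $(-\infty,s)$ by Cauchy--Schwarz (as $\widetilde G\in L^2$ and $\varrho-\gamma>0$), and the boundary contribution at $-\infty$ vanishes because $w(x)=o(x^\varrho)$ as $x\searrow 0$ says exactly $e^{-\gamma s}W(s)=o\bigl(e^{(\varrho-\gamma)s}\bigr)\to 0$ as $s\to-\infty$. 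Integrating from $-\infty$ thus gives $W(s)=\int_0^\infty e^{\gamma\lambda}G(s-\lambda)\,\d\lambda$, and multiplying by $e^{-\varrho s}$ one recognizes a convolution,
\[
\widetilde W = \phi * \widetilde G, \qquad \phi(\lambda) := e^{-(\varrho-\gamma)\lambda}\,\mathbf{1}_{\{\lambda>0\}}, \qquad \vertii{\phi}_{L^1(\R)} = \tfrac{1}{\varrho-\gamma}.
\]
Young's convolution inequality then yields $\verti{w}_\varrho = \vertii{\widetilde W}_{L^2} \le \vertii{\phi}_{L^1}\vertii{\widetilde G}_{L^2} = \tfrac{1}{\varrho-\gamma}\,\verti{(D-\gamma)w}_\varrho$. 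The case $\gamma>\varrho$ is symmetric: one integrates from $+\infty$, uses the hypothesis $w(x)=o(x^\varrho)$ as $x\nearrow\infty$, and obtains the kernel $e^{(\varrho-\gamma)\lambda}\mathbf{1}_{\{\lambda>0\}}$ with $L^1$-norm $\tfrac{1}{\gamma-\varrho}$. In either case $\verti{w}_\varrho \le \verti{\varrho-\gamma}^{-1}\verti{(D-\gamma)w}_\varrho$, which completes the argument.

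\emph{Main (mild) obstacle and an alternative.} The only genuine point is the choice of integration direction and the verification that the homogeneous mode $x^\gamma$ truly disappears: this is precisely where the sign of $\varrho-\gamma$ and the one-sided $o(x^\varrho)$ hypothesis enter, and it is also what makes $\phi\in L^1$; everything else is bookkeeping. Under the present hypotheses there is an even shorter route: with $v:=\widetilde W\in W^{1,2}(\R)$ one expands $\verti{(D-\gamma)w}_\varrho^2 = \vertii{v' + (\varrho-\gamma)v}_{L^2}^2 = \vertii{v'}_{L^2}^2 + (\varrho-\gamma)\bigl[v^2\bigr]_{-\infty}^{\infty} + (\varrho-\gamma)^2\vertii{v}_{L^2}^2$, the boundary term vanishing since $W^{1,2}(\R)\hookrightarrow C_0(\R)$ forces $v\to 0$ at $\pm\infty$; this gives $\verti{(D-\gamma)w}_\varrho^2 \ge (\varrho-\gamma)^2\vertii{v}_{L^2}^2$ and $\ge\vertii{v'}_{L^2}^2$ simultaneously. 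I would nonetheless present the convolution proof, since it uses only one-sided decay together with finiteness of the right-hand side --- the form in which the lemma is invoked in the proof of Proposition~\ref{prop:elliptic} and beyond.
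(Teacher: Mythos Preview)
Your argument is correct. The paper does not give its own proof of this lemma; it merely states it as the tool from which Proposition~\ref{prop:elliptic} follows, pointing to \cite[Lem.~7.2]{ggko.2014} for the latter. Your convolution-and-Young approach in the logarithmic variable is the standard proof of such weighted Hardy inequalities and yields the sharp constant $\verti{\varrho-\gamma}^{-1}$; the integration-by-parts alternative you sketch is equally valid here since the standing hypothesis $\verti{w}_{1,\varrho}<\infty$ already forces $v=e^{-\varrho s}w(e^s)\in W^{1,2}(\R)$, so the boundary term vanishes without invoking the one-sided $o(x^\varrho)$ condition. Either route is fine; nothing is missing.
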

For a particular admissible exponent $i \in K_\infty \setminus \{0\}$, we may choose the weights $i \pm \delta$. Both $i + \delta$ and $i-\delta$ appear in the coercivity range of $p(D-\floor{i})$ or $p(D-\floor{i}-1)$ (these are the intervals $(\floor{i}-1,\floor{i})$ and $(\floor{i},\floor{i}+1)$). Having control on
\[
\int_0^\infty t^{2 i - 1} \verti{w^{(\floor{i})}}_{k(\floor{i},0,i-\floor{i}+1-\delta)+4,i - \delta}^2 \d t \quad \mbox{if } \, i \in \N_0
\]
or
\[
\int_0^\infty t^{2 i + 1} \verti{w^{(\floor{i}+1)}}_{k(\floor{i}+1,0,i-\floor{i}\pm\delta)+4,i \pm \delta}^2 \d t \quad \mbox{else},
\]
we obtain control on
\[
\int_0^\infty t^{2 i - 1} \verti{u - \sum_{j \in K_i} u_j x^j}_{k(\floor{i},0,i-\floor{i}+1-\delta)+4+ 4 \floor{i} + \verti{J_{\floor{i}}},i - \delta}^2 \d t \quad \mbox{for } \, i \in \N_0
\]
or else
\[
\int_0^\infty t^{2 i - 1} \verti{u - \sum_{j \in K_i} u_j x^j}_{k(\floor{i}+1,0,i-\floor{i}-\delta)+4+ 4 \left(\floor{i}+1\right) + \verti{J_{\floor{i}+1}},i - \delta}^2 \d t,
\]
and
\[
\int_0^\infty t^{2 i - 1} \verti{u - \sum_{j \in K_i} u_j x^j - u_i x^i}_{k(\floor{i}+1,0,i-\floor{i}+\delta)+4+ 4 \left(\floor{i}+1\right) + \verti{J_{\floor{i}+1}},i + \delta}^2 \d t,
\]
respectively, by using elliptic maximal regularity given by \eqref{elliptic_mr}. It is quite apparent that, by applying the triangle inequality, the last three terms yield control on the coefficient $u_i(t)$ of the form: $\int_0^\infty t^{2 i - 1} \verti{u_i(t)}^2 \d t$. As we have already noted in the introduction, for $\alpha = \frac 1 2$ and $N = 1$ the first term in estimate~\eqref{wnm_sum} also yields control on $\sup_{t \ge 0} \verti{u_0(t)}^2$ and analogous estimates of higher-order coefficients are possible. We will postpone the details on how to extract further control on the coefficients to Section~\ref{ssec:func} (cf.~Lemma~\ref{lem:est_coeff})\footnote{Indeed, sufficiently strong estimates on them are essential for proving appropriate estimates for the nonlinearity (cf.~Proposition~\ref{prop:nonlinear_est}).}. For the moment we just note that for controlling the singular expansion of $u$, it is convenient to use spatial weights $i \pm \delta$ with $i \in K_\infty \setminus \{0\}$ in our norm.

\medskip

Without further ado, we start with the choice of weights by recalling that
\begin{equation}\label{index_sub}
I_{N-1} \subset I_N - 1 = \{i-1: \, i \in I_N\} \quad \mbox{(cf.~Fig.~\ref{fig:power})}.
\end{equation}
We fix $N_0 \in \N$ and are aiming at controlling expansion~\eqref{expansion} up to order $O\left(x^{N_0}\right)$. In view of \eqref{index_sub} it is reasonable to view the set of indices $\AAA$ as a subset of $[0,1] \times \{1,\cdots,N_0\}$. For each $(\alpha,N) \in \AAA$ we may use estimate~\eqref{wnm_sum} where $\alpha^\prime = \alpha \pm \delta \in (0,1)$. We distinguish between two classes of weights:
\begin{itemize}
\item[(a)] By the above considerations on estimating the coefficients and in view of the fact that \eqref{index_sub} holds true, we start by including $(\alpha,N)$ with
\begin{itemize}
\item[$\star$] $\alpha \in \left(I_{N_0} - N_0 + 1\right) \cup \{\beta\}$ and $N = 2, \cdots, N_0$,
\item[$\star$] $\alpha \in \left(\left(I_{N_0} - N_0 + 1\right) \cup \{\beta\}\right) \cap \left(\frac 1 2,1\right)$ and $N = 1$,
\item[$\star$] as well as $(\alpha,N) = (0,N)$ with $N = 2,\cdots,N_0$
\item[$\star$] and $(\alpha,N) = (1,N)$ with $N = 1,\cdots,N_0-1$,
\end{itemize}
in the set $\AAA$. Thus we are already able to control all coefficients $u_i(t)$ with $N_0 > i  > \frac 1 2$. 
\item[(b)] In all these cases, we need to be able to absorb the respective remnant terms forming the last line of \eqref{wnm_sum}. Since the weight in the norm is shifted by $-\frac 1 2$, this requires to include $(\alpha,N)$ with
\begin{itemize}
\item[$\star$] $\alpha \in \left(\left(I_{N_0} - N_0+\frac 1 2\right) \cup \{\beta - \frac 1 2\}\right) \cap \left(0,\frac 1 2\right)$ and $N = 2, \cdots, N_0$,
\item[$\star$] $\alpha \in \left(I_{N_0} - N_0+\frac 3 2\right) \cap \left(\frac 1 2,1\right)$ and $N = 1,\cdots,N_0-1$,
\item[$\star$] as well as $\left(\alpha,N\right) = \left(\frac 1 2,N\right)$ with $N = 1,\cdots,N_0$,
\end{itemize}
in $\AAA$.
\end{itemize}
Now that we have chosen the set of weights $\AAA$, we need to ensure that the absorption mechanism for the last line of \eqref{wnm_sum} works. This requires some additional conditions on the number of derivatives $k(n,m,\alpha^\prime)$. We note that since $\beta$ is irrational, by choosing $\delta > 0$ sufficiently small we have $\alpha^\prime \ne \frac 1 2$ for all $(\alpha,N) \in \AAA$. For $(\alpha,N) \in \AAA$ such that $\alpha + N \ge 2$, we need to distinguish between three cases (cf.~Fig.~\ref{fig:absorb2}):
\begin{itemize}
\item[(a)] If $\alpha^\prime > \frac 1 2$, we can absorb the remnant in the last line of \eqref{wnm_sum} through
\begin{align*}
& \int_0^\infty t^{2 (\alpha+N) - 4} \verti{\partial_t^m w^{(N-m)}}_{k(N-m,m,\alpha^\prime)+2,\alpha^\prime+N-m-\frac 3 2}^2 \d t \\
& \quad \lesssim \int_0^\infty t^{2 \left(\alpha - \frac 1 2 + N\right) - 3} \verti{\partial_t^m w^{(N-m)}}_{k\left(N-m,m,\alpha^\prime-\frac 1 2\right)+4,\alpha^\prime-\frac 1 2 + N - m - 1}^2 \d t,
\end{align*}
where the second line of the inequality appears on the left-hand side of \eqref{wnm_sum} with $\alpha$ replaced by $\alpha - \frac 1 2$. This requires that the indices obey
\begin{equation}\label{alpha-12}
k\left(N-m,m,\alpha^\prime-\frac 1 2\right) \ge k\left(N-m,m,\alpha^\prime\right) - 2 \quad \mbox{if } \, \alpha^\prime \in \left(\frac 1 2, 1\right).
\end{equation}
Indeed one may verify that by construction in all such cases $\left(\alpha-\frac 1 2,N\right) \in \AAA$.
\item[(b)] If $\alpha^\prime < \frac 1 2$ and $N-m \ge 2$, we can absorb the remnant term in \eqref{wnm_sum} through
\begin{align*}
& \int_0^\infty t^{2 (\alpha+N) - 4} \verti{\partial_t^m w^{(N-m)}}_{k(N-m,m,\alpha^\prime)+2,\alpha^\prime+N-m-\frac 3 2}^2 \d t \\
& \quad \lesssim \int_0^\infty t^{2 \left((\alpha + \frac 1 2) + (N - 1)\right) - 3} \verti{\partial_t^m w^{((N-1)-m)}}_{k\left((N-1)-m,m,\alpha^\prime+\frac 1 2\right)+4,\alpha^\prime+\frac 1 2 + (N-1) - m - 1}^2 \d t,
\end{align*}
where the last line of the estimate appears on the left-hand side of \eqref{wnm_sum} with $\alpha$ replaced by $\alpha + \frac 1 2$ and $N$ replaced by $N-1$. In view of \eqref{def_vwr}, this requires the constraint
\begin{equation}\label{alpha+12}
k\left(N-m-1,m,\alpha^\prime+\frac 1 2\right) \ge k\left(N-m,m,\alpha^\prime\right) + 2 + \verti{I_{N-m}} \quad \mbox{if } \, \alpha^\prime \in \left(0, \frac 1 2\right).
\end{equation}
Again, by construction we have $\left(\alpha+\frac 1 2, N-1\right) \in \AAA$.
\item[(c)] If $\alpha^\prime < \frac 1 2$ and $N-m = 1$, necessarily $m \ge 1$ and we can estimate the remnant in \eqref{wnm_sum} by
\begin{align*}
& \int_0^\infty t^{2 (\alpha+N) - 4} \verti{\partial_t^m w^{(1)}}_{k(1,m,\alpha^\prime)+2,\alpha^\prime-\frac 1 2}^2 \d t \\
& \quad \lesssim \int_0^\infty t^{2 \left((\alpha + \frac 1 2) + N - 1\right) - 3} \verti{\partial_t^{(m-1)+1} w^{(1)}}_{k\left(1,m-1,\alpha^\prime+\frac 1 2\right),\alpha^\prime + \frac 1 2 - 1}^2 \d t.
\end{align*}
Here, the second line in the estimate is controlled in \eqref{wnm_sum} with $\alpha + \frac 1 2$ instead of $\alpha$ and $m$ replaced by $m-1$. Yet, the absorption only works if the indices obey
\begin{equation}\label{alpha+12_alt}
k\left(1,m-1,\alpha^\prime+\frac 1 2\right) \ge k\left(1,m,\alpha^\prime\right) + 2 \quad \mbox{if } \, \alpha^\prime \in \left(0, \frac 1 2\right).
\end{equation}
\end{itemize}
%

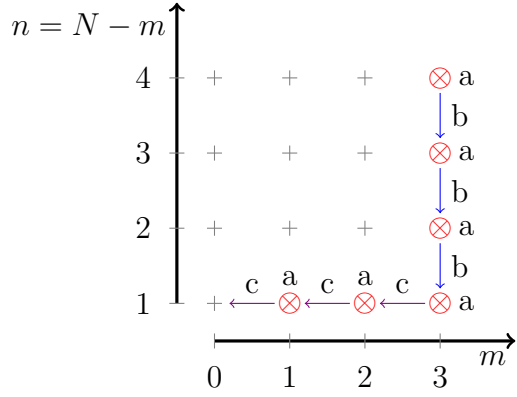
\begin{figure}[htp]
\centering
\begin{tikzpicture}[scale=1]
\draw[very thick,->] (.5,1) -- (.5,5);
\draw[very thick,->] (1,.5) -- (5,.5);
\draw [gray] (.4,1) -- (.6,1);
\draw [gray] (1,.4) -- (1,.6);
\draw [gray] (.4,2) -- (.6,2);
\draw [gray] (2,.4) -- (2,.6);
\draw [gray] (.4,3) -- (.6,3);
\draw [gray] (3,.4) -- (3,.6);
\draw [gray] (.4,4) -- (.6,4);
\draw [gray] (4,.4) -- (4,.6);
\draw (.3,1) node[anchor=east] {$1$};
\draw (1,.3) node[anchor=north] {$0$};
\draw (.3,2) node[anchor=east] {$2$};
\draw (2,.3) node[anchor=north] {$1$};
\draw (.3,3) node[anchor=east] {$3$};
\draw (3,.3) node[anchor=north] {$2$};
\draw (.3,4) node[anchor=east] {$4$};
\draw (4,.3) node[anchor=north] {$3$};
\draw [gray] (.9,1) -- (1.1,1);
\draw [gray] (1,.9) -- (1,1.1);
\draw [gray] (.9,2) -- (1.1,2);
\draw [gray] (1,1.9) -- (1,2.1);
\draw [gray] (.9,3) -- (1.1,3);
\draw [gray] (1,2.9) -- (1,3.1);
\draw [gray] (.9,4) -- (1.1,4);
\draw [gray] (1,3.9) -- (1,4.1);
\draw [red] (2,1) node {$\otimes$};
\draw [gray] (1.9,2) -- (2.1,2);
\draw [gray] (2,1.9) -- (2,2.1);
\draw [gray] (1.9,3) -- (2.1,3);
\draw [gray] (2,2.9) -- (2,3.1);
\draw [gray] (1.9,4) -- (2.1,4);
\draw [gray] (2,3.9) -- (2,4.1);
\draw [red] (3,1) node {$\otimes$};
\draw [gray] (2.9,2) -- (3.1,2);
\draw [gray] (3,1.9) -- (3,2.1);
\draw [gray] (2.9,3) -- (3.1,3);
\draw [gray] (3,2.9) -- (3,3.1);
\draw [gray] (2.9,4) -- (3.1,4);
\draw [gray] (3,3.9) -- (3,4.1);
\draw [red] (4,1) node {$\otimes$};
\draw [red] (4,2) node {$\otimes$};
\draw [red] (4,3) node {$\otimes$};
\draw [red] (4,4) node {$\otimes$};
\draw[blue,->] (4,3.8) -- (4,3.2);
\draw[blue,->] (4,2.8) -- (4,2.2);
\draw[blue,->] (4,1.8) -- (4,1.2);
\draw[violet,->] (3.8,1) -- (3.2,1);
\draw[violet,->] (2.8,1) -- (2.2,1);
\draw[violet,->] (1.8,1) -- (1.2,1);
\draw (4.1,4) node[anchor=west] {a};
\draw (4,3.5) node[anchor=west] {b};
\draw (4.1,3) node[anchor=west] {a};
\draw (4,2.5) node[anchor=west] {b};
\draw (4.1,2) node[anchor=west] {a};
\draw (4,1.5) node[anchor=west] {b};
\draw (4.1,1) node[anchor=west] {a};
\draw (3.5,1) node[anchor=south] {c};
\draw (3,1.1) node[anchor=south] {a};
\draw (2.5,1) node[anchor=south] {c};
\draw (2,1.1) node[anchor=south] {a};
\draw (1.5,1) node[anchor=south] {c};
\draw (.5,4.7) node[anchor=east] {$n =  N-m$};
\draw (4.7,.5) node[anchor=north] {$m$};
\end{tikzpicture}
\caption{Schematic: absorption of remnant terms II. Each node $+$ corresponds to an estimate of the form \eqref{wnm_sum} (with $n = N-m$). The displayed arrows visualize the absorption mechanism starting with $N = 7$, $m = 3$, and $\alpha^\prime > \frac 1 2$. The symbol $\otimes$ denotes a shift of $\alpha$ by $- \frac 1 2$ (keeping $n$ and $m$ fixed). The remnant at the base of the arrow (forming the last line in \eqref{wnm_sum}) is absorbed by the corresponding estimate \eqref{wnm_sum} at the tip of the arrow under the assumption that \eqref{alpha-12}, \eqref{alpha+12}, and \eqref{alpha+12_alt}, respectively, holds true.}
\label{fig:absorb2}
\end{figure}
The above argumentation shows that we can restrict our considerations to the cases in which $\alpha \in \left(\frac 1 2, 1\right)$, $N = 1$, and $m = 0$ since in the case $\alpha = \frac 1 2$, the remnant term in \eqref{wnm_sum} disappears. The remaining terms can be treated by applying an anisotropic version of Hardy's inequality:
\begin{lemma}\label{lem:anisotropic}
Suppose $v: \, (0,\infty)^2 \to \R$ is smooth, $\ell \in \N_0$, and $\alpha \in \left(\frac 1 2, 1\right)$. Then
\begin{equation}\label{anisotropic}
\int_0^\infty t^{2 \alpha - 2} \verti{v}_{\ell,\alpha-\frac 1 2\pm \delta}^2 \d t \lesssim \int_0^\infty \verti{\partial_t v}_{\ell,-\frac 1 2\pm \delta}^2 \d t + \int_0^\infty \verti{v}_{\ell+1,\frac 1 2\pm \delta}^2
\end{equation}
\end{lemma}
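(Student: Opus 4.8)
The plan is to reduce to $\ell=0$, and then to exploit the parabolic scaling $t\sim x$ of the underlying equation through the substitution $t=x\tau$, which collapses \eqref{anisotropic} into a one-dimensional weighted Hardy inequality together with a trivial tail bound. For the reduction, note that $D=x\partial_x$ commutes with $\partial_t$, and that all the norms in \eqref{anisotropic} are sums over $D$-derivatives; hence applying the case $\ell=0$ of \eqref{anisotropic} to each of $D^0v,\dots,D^\ell v$ in place of $v$ and summing reproduces $\verti{v}_{\ell,\alpha-\frac12\pm\delta}$ on the left and $\verti{\partial_t v}_{\ell,-\frac12\pm\delta}$ on the right, while the spatial $D$-derivative of order $\ell+1$ that appears on the right is absorbed into $\verti{v}_{\ell+1,\frac12\pm\delta}$. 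So it suffices to prove
\[
\int_0^\infty t^{2\alpha-2}\verti{v}_{0,\alpha-\frac12\pm\delta}^2\,\d t \lesssim_\alpha \int_0^\infty \verti{\partial_t v}_{0,-\frac12\pm\delta}^2\,\d t + \int_0^\infty \verti{v}_{0,\frac12\pm\delta}^2\,\d t ,
\]
which is non-vacuous exactly when the right-hand side is finite; interchanges of integrals below are justified by Tonelli.

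For the rescaling, write out the norms: the left-hand side equals $\int_0^\infty x^{-2\alpha\mp2\delta}\bigl(\int_0^\infty t^{2\alpha-2}v(t,x)^2\,\d t\bigr)\,\d x$. For fixed $x>0$ substitute $t=x\tau$ in the inner integral and set $G(\tau):=v(x\tau,x)$, so that $G'(\tau)=x\,(\partial_t v)(x\tau,x)$. The powers of $x$ conspire --- this is precisely where the scaling $t\sim x$ enters --- and one obtains
\[
\int_0^\infty t^{2\alpha-2}\verti{v}_{0,\alpha-\frac12\pm\delta}^2\,\d t = \int_0^\infty x^{-1\mp2\delta}\Bigl(\int_0^\infty \tau^{2\alpha-2}\,G(\tau)^2\,\d\tau\Bigr)\,\d x .
\]
Similarly, undoing the substitution shows $\int_0^\infty G'(\tau)^2\,\d\tau = x\int_0^\infty(\partial_t v)(t,x)^2\,\d t$ and $\int_0^\infty G(\tau)^2\,\d\tau = x^{-1}\int_0^\infty v(t,x)^2\,\d t$, so that multiplying by $x^{-1\mp2\delta}$ and integrating in $x$ turns these two quantities back into $\int_0^\infty\verti{\partial_t v}_{0,-\frac12\pm\delta}^2\,\d t$ and $\int_0^\infty\verti{v}_{0,\frac12\pm\delta}^2\,\d t$, respectively.

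It therefore remains to prove, for $\alpha\in(\tfrac12,1)$ and smooth $G:(0,\infty)\to\R$ with finite right-hand side, the one-dimensional inequality
\[
\int_0^\infty \tau^{2\alpha-2}\,G(\tau)^2\,\d\tau \lesssim_\alpha \int_0^\infty G'(\tau)^2\,\d\tau + \int_0^\infty G(\tau)^2\,\d\tau .
\]
On $(1,\infty)$ we have $\tau^{2\alpha-2}\le1$, so that contribution is bounded by $\int_0^\infty G^2\,\d\tau$. On $(0,1)$ the weight is integrable since $\int_0^1\tau^{2\alpha-2}\,\d\tau=(2\alpha-1)^{-1}<\infty$; writing $G(\tau)=G(1)-\int_\tau^1 G'(\rho)\,\d\rho$ and applying Cauchy--Schwarz gives $G(\tau)^2\lesssim G(1)^2+\int_0^1 G'(\rho)^2\,\d\rho$ for all $\tau\in(0,1)$, while a one-dimensional trace (mean-value) bound gives $G(1)^2\lesssim \int_{1/2}^1(G^2+G'^2)\,\d\rho$; hence $\int_0^1\tau^{2\alpha-2}G^2\,\d\tau\lesssim_\alpha\int_0^\infty(G^2+G'^2)\,\d\tau$. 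Inserting this into the previous paragraph and undoing the substitution yields \eqref{anisotropic} with $\ell=0$.

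There is no serious analytic obstacle: once the scaling $\tau=t/x$ is identified, the estimate splits into the trivial tail bound on $(1,\infty)$ and a classical weighted Hardy inequality on $(0,1)$. The only point requiring care is the bookkeeping of the shifted weights $\pm\delta$ and the verification that the powers of $x$ recombine after the substitution so as to reproduce exactly the weighted norms on the right-hand side. (Without the rescaling --- i.e.\ handling $t^{2\alpha-2}$ directly at fixed $x$ --- one is led to a spurious ``diagonal'' trace $\int_0^\infty x^{-1}v(x,x)^2\,\d x$ that is not controlled by the right-hand side of \eqref{anisotropic}, which is why the rescaling is essential rather than cosmetic.)
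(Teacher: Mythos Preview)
Your proof is correct. The paper does not give its own argument but defers to \cite[Lem.~7.5]{ggko.2014}; your route---reduce to $\ell=0$, then use the parabolic rescaling $\tau=t/x$ to collapse the estimate to the one-dimensional weighted inequality $\int_0^\infty \tau^{2\alpha-2}G^2\,\d\tau \lesssim_\alpha \int_0^\infty (G'^2+G^2)\,\d\tau$ for $\alpha\in(\tfrac12,1)$---is exactly the natural proof and matches the spirit of the cited reference.

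Two minor remarks. First, your displayed $\ell=0$ statement has $\verti{v}_{0,\frac12\pm\delta}$ on the right rather than $\verti{v}_{1,\frac12\pm\delta}$; this is \emph{stronger} than the $\ell=0$ case of the lemma, and your proof indeed establishes it. Consequently, in your reduction paragraph the sentence about ``the spatial $D$-derivative of order $\ell+1$ that appears on the right'' is unnecessary---applying your stronger $\ell=0$ bound to $D^0v,\dots,D^\ell v$ already gives $\verti{v}_{\ell,\frac12\pm\delta}$ (not $\ell+1$) on the right, which is then trivially bounded by $\verti{v}_{\ell+1,\frac12\pm\delta}$. Second, the bookkeeping of the $x$-powers under the substitution is exactly right: the crucial cancellation $x^{-2\alpha\mp2\delta}\cdot x^{2\alpha-1}=x^{-1\mp2\delta}$ is what makes the same weight $x^{-1\mp2\delta}$ appear in front of all three $\tau$-integrals, so the reduction to the one-dimensional inequality is clean.
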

A proof can be found in \cite[Lem.~7.5]{ggko.2014}. The remnant in \eqref{wnm_sum} for $(N,m) = (1,0)$ is of the form
\[
\int_0^\infty t^{2 \alpha - 2} \verti{w^{(1)}}_{k(1,0,\alpha^\prime)+2,\alpha^\prime-\frac 1 2}^2 \d t
\]
and can be absorbed by estimate~\eqref{wnm_sum} with $\alpha = \frac 1 2$ and $N = 1$, that is,

\begin{equation}\label{aniso_appl}
\int_0^\infty t^{2 \alpha - 2} \verti{w^{(1)}}_{k\left(1,0,\alpha^\prime\right)+2,\alpha^\prime - \frac 1 2}^2 \, \d t \lesssim \int_0^\infty \verti{\partial_t w^{(1)}}_{k,-\frac 1 2 \pm \delta}^2 \, \d t + \int_0^\infty \verti{w^{(1)}}_{k+4,\frac 1 2 \pm \delta}^2 \, \d t,
\end{equation}
where we write $k := k\left(1,0,\frac 1 2 \pm \delta\right)$ (assuming that the values for $+$ and $-$ coincide), provided we have
\begin{equation}\label{alpha_12_1}
k \ge k\left(1,0,\alpha^\prime\right) + 2 \quad \mbox{if } \, \alpha^\prime \in \left(\frac 1 2, 1\right) \, \mbox{ and } \, \alpha \ne \frac 1 2.
\end{equation}

\bigskip

By summing over all estimates~\eqref{wnm_sum} with $(\alpha,N) \in \AAA$ and $\alpha^\prime = \alpha \pm \delta \in (0,1)$, and fulfilling conditions~\eqref{same_alpha}, \eqref{alpha-12}, \eqref{alpha+12}, \eqref{alpha+12_alt}, and \eqref{alpha_12_1}, we obtain
\begin{equation}\label{mr_star}
\vertiii{u}_* \lesssim \vertiii{u^{(0)}}_{*,0} + \vertiii{f}_{*,1},
\end{equation}
where
\begin{equation}\label{norm_0star}
\vertiii{u^{(0)}}_{*,0}^2 := \verti{w^{(0,0)}}_{k+2,\frac 1 2 - \delta}^2 + \verti{w^{(0,0)}}_{k+2,\frac 1 2 + \delta}^2 \quad \mbox{with } \, w^{(0,0)} := p(D) u^{(0)}
\end{equation}
is the norm for the initial data,
\begin{equation}\label{norm_sol_star}
\begin{aligned}
\vertiii{u}_*^2 := \, & \sum_{\substack{(\alpha,N) \in \AAA \\ \alpha^\prime = \alpha \pm \delta \in (0,1)}} \sup_{t \ge 0} t^{2 (\alpha+N) - 3} \sum_{m = 0}^{N-1} \verti{\partial_t^m w^{(N-m)}}_{k(N-m,m,\alpha^\prime) + 2,\alpha^\prime + N-m - \frac 3 2}^2 \\
&+ \sum_{\substack{(\alpha,N) \in \AAA \\ \alpha^\prime = \alpha \pm \delta \in (0,1)}} \int_0^\infty t^{2 (\alpha+N) - 3} \sum_{m = 0}^{N-1} \verti{\partial_t^{m+1} w^{(N-m)}}_{k(N-m,m,\alpha^\prime),\alpha^\prime+N-m-2}^2 \d t \\
&+ \sum_{\substack{(\alpha,N) \in \AAA \\ \alpha^\prime = \alpha \pm \delta \in (0,1)}} \int_0^\infty t^{2 (\alpha+N) - 3} \sum_{m = 0}^{N-1} \verti{\partial_t^m w^{(N-m)}}_{k(N-m,m,\alpha^\prime)+4,\alpha^\prime+N-m-1}^2 \d t
\end{aligned}
\end{equation}
is the norm for the solution $u$ (with $w^{(n)}$ defined in \eqref{def_wn}), and
\begin{equation}\label{norm_rhs_star}
\vertiii{f}_{*,1}^2 := \sum_{\substack{(\alpha,N) \in \AAA \\ \alpha^\prime = \alpha \pm \delta \in (0,1)}} \int_0^\infty t^{2 (\alpha+N) - 3} \sum_{m = 0}^{N-1} \verti{\partial_t^m r^{(N-m)}}_{k(N-m,m,\alpha^\prime),\alpha^\prime+N-m-1}^2 \d t
\end{equation}
is the norm for the right-hand side $f$ (with $r^{(n)}$ defined in \eqref{def_rn}).

\medskip

Now we make a further assumption, that is, we assume
\begin{equation}\label{alpha_equiv}
\begin{aligned}
& k(n,m,\alpha^\prime) \, \mbox{ is constant for } \, \alpha^\prime \in \left(0, \frac 1 2\right), \, \mbox{ and } \, \alpha^\prime \in \left(\frac 1 2, 1\right) \, \mbox{ respectively}, \\
&\mbox{except for half integers } \, \alpha \in \left\{0,\frac 1 2, 1\right\}.
\end{aligned}
\end{equation}
One may verify that thus still conditions~\eqref{same_alpha}, \eqref{alpha-12}, \eqref{alpha+12}, \eqref{alpha+12_alt}, and \eqref{alpha_12_1} can be satisfied (see below).

\subsubsection{Maximal regularity for the linear equation}
Further applying elliptic maximal regularity given by Proposition~\ref{prop:elliptic}, we infer that the norms $\vertiii{\cdot}_0$ and $\vertiii{\cdot}_{*,0}$, $\vertiii{\cdot}$ and $\vertiii{\cdot}_*$, as well as $\vertiii{\cdot}_1$ and $\vertiii{\cdot}_{*,1}$, respectively, are equivalent, where $\vertiii{\cdot}_0$ is given by \eqref{norm_initial}, i.e.,
\[
\vertiii{u^{(0)}}_0^2 := \verti{u^{(0)}}_{k+6,-\delta}^2 + \verti{u^{(0)} - u^{(0)}_0}_{k+6,\delta}^2,
\]
the norm $\vertiii{\cdot}$ for the solution $u$ is given by
\begin{equation}\label{norm_sol}
\begin{aligned}
\vertiii{u}^2 := \, & \sum_{\substack{(\alpha,N) \in \AAA \\ \alpha^\prime = \alpha \pm \delta \in (0,1)}} \sup_{t \ge 0} t^{2 (\alpha+N) - 3} \sum_{m = 0}^{N-1} \verti{\partial_t^m u - \sum_{i < \alpha^\prime + N-m - \frac 3 2} \frac{\d^m u_i}{\d t^m} x^i}_{\ell(N-m,m,\alpha^\prime) + 2,\alpha^\prime + N-m - \frac 3 2}^2 \\
&+ \sum_{\substack{(\alpha,N) \in \AAA \\ \alpha^\prime = \alpha \pm \delta \in (0,1)}} \int_0^\infty t^{2 (\alpha+N) - 3} \sum_{m = 0}^{N-1} \verti{\partial_t^{m+1} u - \sum_{i < \alpha^\prime + N-m - 2} \frac{\d^{m+1} u_i}{\d t^{m+1}} x^i}_{\ell(N-m,m,\alpha^\prime),\alpha^\prime+N-m-2}^2 \d t \\
&+ \sum_{\substack{(\alpha,N) \in \AAA \\ \alpha^\prime = \alpha \pm \delta \in (0,1)}} \int_0^\infty t^{2 (\alpha+N) - 3} \sum_{m = 0}^{N-1} \verti{\partial_t^m u - \sum_{i < \alpha^\prime + N-m - 1} \frac{\d^m u_i}{\d t^m} x^i}_{\ell(N-m,m,\alpha^\prime)+4,\alpha^\prime+N-m-1}^2 \d t
\end{aligned}
\end{equation}
with (cf.~\eqref{def_jnin} and \eqref{def_vwr})
\begin{equation}\label{def_lnm}
\ell(n,m,\alpha^\prime) := k(n,m,\alpha^\prime) + \verti{J_n} + 4 n,
\end{equation}
and the norm $\vertiii{\cdot}_1$ for the right-hand side $f$ reads
\begin{equation}\label{norm_rhs}
\vertiii{f}_1^2 := \sum_{\substack{(\alpha,N) \in \AAA \\ \alpha^\prime = \alpha \pm \delta \in (0,1)}} \int_0^\infty t^{2 (\alpha+N) - 3} \sum_{m = 0}^{N-1} \verti{\partial_t^m f - \sum_{\beta < i < \alpha^\prime + N-m - 1} \frac{\d^m f_i}{\d t^m} x^i}_{\ell(N-m,m,\alpha^\prime),\alpha^\prime+N-m-1}^2 \d t.
\end{equation}
Consequently, \eqref{mr_star} turns into the maximal-regularity estimate
\begin{equation}\label{mr_main}
\vertiii{u} \lesssim \vertiii{u^{(0)}}_0 + \vertiii{f}_1.
\end{equation}
For convenience, we summarize the conditions on the numbers $\ell(n,m,\alpha^\prime)$: $\ell(n,m,\alpha^\prime)$ is constant for all $\alpha^\prime \in \left(0,\frac 1 2\right)$ and $\alpha^\prime \in \left(\frac 1 2, 1\right)$, respectively, except for $\alpha \in \left\{0,\frac 1 2, 1\right\}$. Furthermore, the following inequalities (through \eqref{def_lnm} equivalent to \eqref{same_alpha}, \eqref{alpha-12}, \eqref{alpha+12}, \eqref{alpha+12_alt}, and \eqref{alpha_12_1}) must hold:
\begin{subequations}\label{linear_cond}
\begin{align}
\ell(N-m-1,m+1,\alpha^\prime) &\ge \ell(N-m,m,\alpha^\prime) - 4 \quad \mbox{for } \, N-m \ge 2,\label{linear_cond1}\\
\ell\left(N-m,m,\alpha^\prime-\frac 1 2\right) &\ge \ell\left(N-m,m,\alpha^\prime\right) - 2 \quad \mbox{if } \, \alpha^\prime \in \left(\frac 1 2, 1\right) \, \mbox{ and } \, \alpha + N \ge 2,\label{linear_cond2}\\
\ell\left(N-m-1,m,\alpha^\prime+\frac 1 2\right) &\ge \ell\left(N-m,m,\alpha^\prime\right) - 2 \quad \mbox{if } \, \alpha^\prime \in \left(0, \frac 1 2\right), \; N-m \ge 2,\label{linear_cond3}\\
\ell\left(1,m-1,\alpha^\prime+\frac 1 2\right) &\ge \ell\left(1,m,\alpha^\prime\right) + 2 \quad \mbox{if } \, \alpha^\prime \in \left(0, \frac 1 2\right) \, \mbox{ and } \, m \ge 1,\label{linear_cond3_alt}\\
k &\ge \ell\left(1,0,\alpha^\prime\right) - 2 \quad \mbox{if } \, \alpha^\prime \in \left(\frac 1 2, 1\right) \, \mbox{ and } \, \alpha \ne \frac 1 2.\label{linear_cond4}
\end{align}
\end{subequations}
It is apparent that conditions~\eqref{linear_cond} can be fulfilled and non-negativity of $k\left(n,m,\alpha^\prime\right)$ can be ensured (cf.~\eqref{def_lnm}) if we explicitly choose (cf.~\eqref{def_jn} for the definition of $J_n$)
\begin{subequations}\label{explicit}
\begin{align}
\ell\left(n,m,\alpha^\prime\right) &:= 8 N_0 + \verti{J_{N_0}} - 2 \floor{2 \left(n+m+\alpha^\prime\right)} \quad \mbox{for } \, \alpha \notin \left\{0,\frac 1 2,1\right\}, \\
\ell\left(n,m,\alpha^\prime\right) &:= 8 N_0 + \verti{J_{N_0}} + 3 - 4 \left(n+m+\alpha\right) \quad \mbox{for } \, \alpha \in \left\{0,\frac 1 2,1\right\}, \;\, (n,m,\alpha) \ne \left(1,0,\frac 1 2\right), \\
k &:= 8 N_0 + \verti{J_{N_0}} - 5.
\end{align}
\end{subequations}
This choice is also compatible with the ``nonlinear" conditions \eqref{cond_l_non}, which are necessary for the treatment of the nonlinearity $\NN(u)$ in Section~\ref{sec:nonlinear}.
\subsection{Properties of the parabolic norms and definition of function spaces\label{ssec:func}}
In this subsection we summarize some of the properties of the parabolic norms $\vertiii{\cdot}$, $\vertiii{\cdot}_0$, and $\vertiii{\cdot}_1$ (cf.~\eqref{norm_initial}, \eqref{norm_sol}, \eqref{norm_rhs}).

\begin{lemma}\label{lem:est_coeff}
For given $N_0 \in \N$ and locally integrable $u, f: \, (0,\infty)^2 \to \R$ such that the generalized power series \eqref{expansion} exists to order $O(x^{N_0})$, the following estimates (with constants independent of $f$ and $u$) hold true:
\begin{subequations}\label{est_coeff}
\begin{align}
\int_0^\infty t^{2 i + 2 m - 1} \verti{\frac{\d^m u_i}{\d t^m}(t)}^2 \d t &\lesssim \vertiii{u}^2 \quad \mbox{for } \, i \in K_{N_0-m} \setminus \{0\} \, \mbox{ and } \, m \in \N_0, \label{est_coeff1}\\
\sup_{t \ge 0} t^{2 i + 2 m} \verti{\frac{\d^m u_i}{\d t^m}(t)}^2 &\lesssim \vertiii{u}^2 \quad \mbox{for } \, i \in K_{N_0-m-\frac 1 2} \, \mbox{ and } \, m \in \N_0, \label{est_coeff2}\\
\int_0^\infty t^{2 i + 2 m - 1} \verti{\frac{\d^m f_i}{\d t^m}(t)}^2 \d t &\lesssim \vertiii{f}_1^2 \quad \mbox{for } \, i \in K_{N_0-m} \setminus \{0,\beta\} \, \mbox{ and } \, m \in \N_0. \label{est_coeff3}
\end{align}
\end{subequations}
Furthermore, for any locally integrable $u^{(0)}: \, (0,\infty) \to \R$ such that $u^{(0)}_0 = \lim_{x \searrow 0} u^{(0)}(x)$ exists, we have $\verti{u^{(0)}_0} \lesssim \vertiii{u^{(0)}}_0$ (where the constant is independent of $u^{(0)}$).
\end{lemma}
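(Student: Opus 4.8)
The plan is to extract the coefficient estimates \eqref{est_coeff1}--\eqref{est_coeff3} from the definition \eqref{norm_sol} of $\vertiii{\cdot}$ (and \eqref{norm_rhs} of $\vertiii{\cdot}_1$) by, for each admissible exponent $i$, selecting the single summand in that definition whose spatial weight lies just above $i$ and whose time weight matches the one appearing on the left-hand side. Concretely, for \eqref{est_coeff1} with $i \in K_{N_0-m}\setminus\{0\}$, I would choose $(\alpha,N)\in\AAA$ and the sign in $\alpha'=\alpha\pm\delta$ so that $\alpha'+N-m-1 = i+\delta'$ for some small $\delta'>0$ (this is precisely the reason the weights $i\pm\delta$, $i\in K_\infty\setminus\{0\}$, were built into $\AAA$ in Section~\ref{ssec:heur_par}), and then look at the corresponding term $\int_0^\infty t^{2(\alpha+N)-3}\verti{\partial_t^m u - \sum_{j<i+\delta'}\tfrac{\d^m u_j}{\d t^m}x^j}_{\ell(\,\cdot\,)+4,\,i+\delta'}^2\,\d t$ in the third block of \eqref{norm_sol}; since the subtracted polynomial removes exactly the terms $x^j$ with $j<i$ (as $j=i$ is not in the sum because $i\not<i+\delta'$ would require $\delta'$ chosen so that $i$ \emph{is} excluded — here one picks the $-\delta$ branch so that $\alpha'+N-m-1=i-\delta<i$), what remains has leading term $\tfrac{\d^m u_i}{\d t^m}(t)\,x^i$ plus strictly higher powers. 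A one-sided Hardy-type / point-evaluation argument (the same elementary fact used after \eqref{sobolev_norm}, namely $\verti{w}_{1,\varrho}<\infty$ with $w(x)=o(x^\varrho)$ forces $w$ to vanish to order $\varrho$ at $0$, applied here to $w = \partial_t^m u - \sum_{j\le i}\tfrac{\d^m u_j}{\d t^m}x^j$ against the weight $i-\delta$, combined with the reverse bound for the weight $i+\delta$) then shows $\verti{\tfrac{\d^m u_i}{\d t^m}(t)}^2 \lesssim \verti{\partial_t^m u - \sum_{j<i}\tfrac{\d^m u_j}{\d t^m}x^j}_{1,\,i\pm\delta}^2$ pointwise in $t$. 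Multiplying by $t^{2i+2m-1}$ and integrating, and noting $2(\alpha+N)-3 = 2i+2m-1$ by the choice of $(\alpha,N)$ tied to $i$ via $\alpha'+N-m-1=i\pm\delta$, yields \eqref{est_coeff1}.

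For \eqref{est_coeff2} I would run the same selection but using the first (supremum) block of \eqref{norm_sol}: for $i\in K_{N_0-m-\frac12}$ choose $(\alpha,N)\in\AAA$ with $\alpha'+N-m-\tfrac32 = i\pm\delta$, so that $2(\alpha+N)-3 = 2i+2m$, and read off $\sup_{t\ge0} t^{2i+2m}\verti{\partial_t^m u - \sum_{j<i\pm\delta}\tfrac{\d^m u_j}{\d t^m}x^j}_{\ell+2,\,i\pm\delta}^2 \le \vertiii{u}^2$; the same two-sided pointwise extraction of the coefficient then gives \eqref{est_coeff2}. Here the relevant case $i=0$, $m=0$ is exactly the remark made after Theorem~\ref{th:main} ($\alpha=\tfrac12$, $N=1$ controls $\sup_t\verti{u_0(t)}^2$), and the membership conditions on $(\alpha,N)$ in class~(a) of Section~\ref{ssec:heur_par} were designed to make all these choices available — this bookkeeping, matching each $i$ to a legitimate $(\alpha,N)\in\AAA$ with the correct $\pm\delta$ branch, is the one genuinely fiddly point. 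Estimate \eqref{est_coeff3} is identical in structure, now reading the single summand of \eqref{norm_rhs} with $\alpha'+N-m-1=i\pm\delta$; the exclusion of $i\in\{0,\beta\}$ reflects that $f$ (equivalently $g^{(1)},r^{(n)}$) is compatible-$O(x)$ with no $x^0$ or $x^\beta$ component, so the subtracted polynomial in \eqref{norm_rhs} already starts at $i>\beta$ and the argument applies verbatim to the remaining exponents.

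Finally, the bound $\verti{u^{(0)}_0}\lesssim\vertiii{u^{(0)}}_0$ is the static ($t$-independent) analogue: from \eqref{norm_initial}, $\verti{u^{(0)}-u^{(0)}_0}_{k+6,\delta}<\infty$ forces $u^{(0)}(x)-u^{(0)}_0 = o(x^\delta)$ as $x\searrow0$ (again the post-\eqref{sobolev_norm} fact), while $\verti{u^{(0)}}_{k+6,-\delta}<\infty$ bounds $u^{(0)}$ near $x=0$ against $x^{-\delta}$; evaluating $u^{(0)}(x) = u^{(0)}_0 + o(x^\delta)$ at a fixed reference point and using Sobolev embedding in the $s=\ln x$ variable to turn the norm into a sup bound gives $\verti{u^{(0)}_0}\lesssim\vertiii{u^{(0)}}_0$ — this is precisely the estimate \cite[Eq.~(8.5)]{ggko.2014} already quoted after \eqref{norm_initial}, so it may simply be invoked. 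The main obstacle is not analytical depth but the combinatorial verification that for every exponent $i$ and derivative order $m$ appearing in the claimed ranges $K_{N_0-m}\setminus\{0\}$, $K_{N_0-m-\frac12}$, $K_{N_0-m}\setminus\{0,\beta\}$, there is indeed a pair $(\alpha,N)\in\AAA$ (with the appropriate $\delta$-branch) realizing the required weight and time-exponent identities; this follows by inspection of the construction of $\AAA$ in classes (a) and (b) of Section~\ref{ssec:heur_par} together with the defining relation \eqref{def_kn}, but it is where the care lies.
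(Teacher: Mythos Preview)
Your proposal is correct and follows essentially the same approach as the paper: for each coefficient $u_i$ (or $f_i$) you select $(\alpha,N)\in\AAA$ with $\alpha+N-m-1=i$ (respectively $\alpha+N-m-\tfrac32=i$ for the sup estimate), use both $\pm\delta$ branches of the corresponding summand in \eqref{norm_sol} or \eqref{norm_rhs}, and extract the coefficient by a pointwise-in-$t$ argument---the paper makes this concrete by integrating over $x\in[\tfrac12,2]$ and applying the triangle inequality, which is exactly your ``two-sided point-evaluation'' step. Your identification of the combinatorial bookkeeping (matching each $(i,m)$ to a legitimate $(\alpha,N)\in\AAA$) as the only real issue, and your treatment of $\verti{u^{(0)}_0}$ by citing \cite{ggko.2014}, also agree with the paper.
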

\begin{proof}
The estimate for $u^{(0)}$ has already been proven in \cite[Lem.~4.3~(a)]{ggko.2014}. Estimates~\eqref{est_coeff} for the coefficients follow quite elementarily by the same reasoning as in the proof of \cite[Lem.~4.3~(b)]{ggko.2014}:

\medskip

For estimate~\eqref{est_coeff1} we take $(\alpha,N) \in \AAA$ with $\alpha + N - m - 1 = i$ and $i \notin \N_0$ and obtain\footnote{The assumption $i \notin \N_0$ is merely for notational simplicity. The reader may verify that the reasoning works in the same way for $i \in \N_0$.}:
\begin{align*}
\verti{\frac{\d^m u_i}{\d t^m}}^2 \lesssim \, & \int_{\frac 1 2}^2 \verti{\frac{\d^m u_i}{\d t^m}}^2 \d x \\
\lesssim \, &  \int_{\frac 1 2}^2 \left(\verti{\partial_t^m u - \sum_{j < \alpha + N - m - 1 - \delta} \frac{\d^m u_j}{\d t^m} x^j}^2 + \verti{\partial_t^m u - \sum_{j < \alpha + N - m - 1 - \delta} \frac{\d^m u_j}{\d t^m} x^j - \frac{\d^m u_i}{\d t^m} x^i}^2\right) \d x \\
\lesssim \, &  \verti{\partial_t^m u - \sum_{j < \alpha + N - m - 1 - \delta} \frac{\d^m u_j}{\d t^m} x^j}_{\ell(N-m,m,\alpha^\prime)+4,\alpha+N-m-1-\delta}^2 \\
&+ \verti{\partial_t^m u - \sum_{j < \alpha + N - m - 1 + \delta} \frac{\d^m u_j}{\d t^m} x^j}_{\ell(N-m,m,\alpha^\prime)+4,\alpha+N-m-1+\delta}^2
\end{align*}
Multiplying with the time weight $t^{2 (\alpha+N)-3}$ and integrating in time, we obtain \eqref{est_coeff1} (cf.~\eqref{norm_sol}). For proving estimate~\eqref{est_coeff2}, we take $(\alpha,N) \in \AAA$ with $\alpha + N - m - \frac 3 2 = i$ and the same reasoning as above (taking the $\sup$ in time instead of integrating) leads to estimate~\eqref{est_coeff2}. The proof of \eqref{est_coeff3} is the same as for \eqref{est_coeff1}.
\end{proof}
Estimates~\eqref{est_coeff} will turn out to be relevant for estimating the nonlinearity. However, they are also convenient in order to define appropriate spaces for our solution and the right-hand side. As in the case of standard Sobolev spaces there are two possible approaches:

\medskip

On the one hand, one may define for a locally integrable function $u: (0,\infty)^2 \to \R$ its distributional derivatives and take the infimum over all possible coefficients $u_i : (0,\infty) \to \R$ in the definition of the norm $\vertiii{u}$ in \eqref{norm_sol}. It is apparent from \eqref{norm_sol} that if $\vertiii{u} < \infty$, the coefficients $u_i$ are uniquely defined almost everywhere. This corresponds to the definition of the $W$-scale in the standard theory of Sobolev spaces.

\medskip

On the other hand, it was shown in \cite[Lem.~B.4]{ggko.2014} for $N_0 = 1$ that one can approximate any locally integrable $u: (0,\infty)^2 \to \R$ with $\vertiii{u} < \infty$ by a sequence $\left(u^{(\nu)}: (0,\infty)^2 \to \R\right)_{\nu \in \N}$ of more regular functions $u^{(\nu)}$ fulfilling:
\begin{itemize}
\item[(a)] $u^{(\nu)} \in C^\infty((0,\infty)^2) \cap C^0_0([0,\infty)^2)$;
\item[(b)] for every $t \in [0,\infty)$ we have $u^{(\nu)}(t,x) = u^{(\nu)}_0(t) + u^{(\nu)}_\beta(t) x^\beta$ for $x \ll_\nu 1$, where $u^{(\nu)}_0, u^{(\nu)}_{\beta}: (0,\infty) \to \R$ are smooth;
\item[(c)] $\vertiii{u-u^{(\nu)}} \to 0$ as $\nu \to \infty$.
\end{itemize}
Taking the closure of all $u: (0,\infty)^2 \to \R$ with (a), (b), and (c) with respect to $\vertiii{\cdot}$ (with $N_0 = 1$), we end up with the analogue of what one commonly refers to as the $H$-scale of Sobolev spaces. Lemma~B.4 of reference \cite{ggko.2014} then states that for $N_0 = 1$ indeed $H = W$ holds\footnote{This is also true for the norm $\vertiii{\cdot}_0$ as shown in \cite[Lem.~B.3]{ggko.2014}.}.

\medskip

Unlike in \cite{ggko.2014}, where it turned out to be more convenient to rely on the $W$-approach of Sobolev spaces, we will employ the $H$-approach in what follows:
\begin{definition}\label{def:spaces}
Suppose $N_0 \in \N$ and $\delta > 0$ is chosen sufficiently small. Furthermore suppose that conditions~\eqref{linear_cond} are fulfilled (cf.~\eqref{explicit} for a specific choice).
\begin{itemize}
\item[(a)] We define the space $U_0$ of initial data $u^{(0)}$ as the closure of all $u^{(0)} \in C^\infty((0,\infty)) \cap C^0_0([0,\infty))$ with $u^{(0)}(x) = u^{(0)}_0 + u^{(0)}_\beta x^\beta$ for $x \ll 1$ with respect to $\vertiii{\cdot}_0$ (cf.~\eqref{norm_initial}).
\item[(b)] The solution space $U$ is defined as the closure with respect to $\vertiii{\cdot}$ (cf.~\eqref{norm_sol}) of all $u \in C^\infty\left((0,\infty)^2\right) \cap C^0_0\left([0,\infty)^2\right)$ such that $u(t,x) = \sum_{i \in K_{N_0}} u_i(t) x^i$ for $x \ll 1$, where the $u_i: (0,\infty) \to \R$ are smooth functions of time.
\item[(c)] The space $F$ of right-hand sides $f$ is defined as the closure with respect to $\vertiii{\cdot}_1$ (cf.~\eqref{norm_rhs}) of all $f \in C^\infty\left((0,\infty)^2\right) \cap C^0_0\left([0,\infty)^2\right)$ such that $f(t,x) = \sum_{i \in K_{N_0} \setminus \{0,\beta\}} f_i(t) x^i$ for $x \ll 1$, where the $f_i: (0,\infty) \to \R$ are smooth.
\end{itemize}
\end{definition}
We mark that in view of Lemma~\ref{lem:est_coeff} the coefficients $u_i$, $u_0^{(0)}$, and $f_i$ of the generalized power series are also defined (at least almost everywhere in time $t$) for functions $u$, $u^{(0)}$, and $f$, respectively, for which $\vertiii{u}$, $\vertiii{u^{(0)}}_0$, and $\vertiii{f}_1$, respectively, is finite.

\medskip

Next we also provide another estimate that in particular guarantees control of the norm $\sup_{t \ge 0} \vertii{u}$, where
\begin{equation}\label{def_c0}
\vertii{u} := \sup_{x > 0} \verti{u(x)}
\end{equation}
denotes the $\infty$-norm in space $x$. By approximation this also implies continuity of $u$ and derivatives if $u \in U$.
\begin{lemma}\label{lem:c0control}
Suppose that $N_0 \in \N$, $m \in \{0,\cdots,N_0-1\}$, and $\ell \in \left\{0,\cdots,\ell\left(1,m,\frac 1 2\pm\delta\right)+1\right\}$. Then we have
\begin{equation}\label{est_c0norm}
\sup_{t \ge 0} t^{2 m} \vertii{\partial_t^m D^\ell u}^2 + \sup_{t \ge 0} t^{2 m} \vertii{\partial_t^m D^\ell (u-u_0)}^2 \lesssim \vertiii{u}^2 \quad \mbox{for all } \, u \in U,
\end{equation}
where the constant in the estimate is independent of $u$.
\end{lemma}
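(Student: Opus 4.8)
The plan is to deduce the pointwise-in-$x$ control \eqref{est_c0norm} from the Sobolev-type control already encoded in $\vertiii{u}$ via a one-dimensional Sobolev embedding in the logarithmic variable $s = \ln x$, combined with the coefficient bounds of Lemma~\ref{lem:est_coeff}. First I would recall (from the discussion around \eqref{sobolev_norm}) that for fixed $t$ and any weight $\alpha$, setting $v(s) := e^{-\alpha s} (\partial_t^m u)(t, e^s)$ one has $\verti{\partial_t^m u}_{k,\alpha} \sim_\alpha \vertii{v}_{W^{k,2}(\R)}$, so that by the embedding $W^{1,2}(\R) \hookrightarrow L^\infty(\R)$ one gets $\sup_{x>0} x^{-\alpha} \verti{\partial_t^m D^\ell u(t,x)} \lesssim \verti{\partial_t^m u}_{\ell+1,\alpha}$. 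The point is that the weight $\alpha$ must be chosen so that the weighted $L^\infty$ bound is equivalent to the unweighted one near $x = 0$ and near $x = \infty$ simultaneously — and this is exactly why one subtracts the leading coefficients: $u - u_0$ (and its $D$-derivatives, since $D x^0 = 0$) decays like $x^\beta$ as $x \searrow 0$, so one uses a small positive weight there, whereas $u$ itself (bounded) needs a small negative weight.

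Concretely, I would split into two regimes. For $x \le 1$: apply the embedding with weight $\alpha = +\delta$ to $\partial_t^m D^\ell(u - u_0)$, using that $D^\ell(u-u_0) = D^\ell u$ for $\ell \ge 1$ and that the relevant norm $\verti{\partial_t^m u - \frac{\d^m u_0}{\d t^m}}_{\ell(N-m,m,\alpha')+4,\alpha'+N-m-1}$ with an appropriate $(\alpha,N) \in \AAA$ satisfying $\alpha' + N - m - 1 = \delta$ appears (after multiplication by the time weight $t^{2m}$) inside $\vertiii{u}^2$; this controls $\sup_{x \le 1} x^{-\delta}\verti{\partial_t^m D^\ell(u-u_0)}$, hence a fortiori the unweighted sup over $x \le 1$. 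Then $\sup_{x\le 1}\verti{\partial_t^m D^\ell u}$ is controlled by the triangle inequality after adding back $\sup_t t^{2m}\verti{\frac{\d^m u_0}{\d t^m}}^2 \lesssim \vertiii{u}^2$ from \eqref{est_coeff2} (valid since $0 \in K_{N_0 - m - 1/2}$). For $x \ge 1$: apply the embedding with weight $\alpha = -\delta$ to $\partial_t^m D^\ell u$ directly, using the norm $\verti{\cdot}_{\cdot,-\delta+N-m-1}$-type term with $(\alpha,N)\in\AAA$, $\alpha' + N - m - 1 = -\delta$ (e.g. built from $(\alpha,N)$ with $\alpha = 1-\delta$, $N = 1$, or the appropriate shifted entries of $\AAA$) that likewise sits inside $\vertiii{u}^2$ after multiplication by $t^{2m}$; here the negative weight $x^{+\delta} \ge 1$ dominates the unweighted function for $x \ge 1$. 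Combining the two regimes and taking $\sup_{t \ge 0} t^{2m}$ throughout yields \eqref{est_c0norm}. The range $\ell \in \{0,\dots,\ell(1,m,\tfrac12\pm\delta)+1\}$ is exactly what makes the $\ell + 1 \le \ell(1,m,\tfrac12\pm\delta)+2 \le \ell(N-m,m,\alpha') + 4$ book-keeping work out, using condition \eqref{linear_cond4} (i.e. $k \ge \ell(1,0,\alpha')-2$) and the monotonicity of $\ell(n,m,\cdot)$ in \eqref{linear_cond}; the value $\tfrac12\pm\delta$ is picked because it is the weight at which the supremum (rather than a time integral) is available in $\vertiii{u}$, cf. the first line of \eqref{norm_sol} with $\alpha' + N - m - \tfrac32$ ranging near $0$.

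Finally, the continuity assertion: once the bound \eqref{est_c0norm} holds for all $u$ in the dense subclass from Definition~\ref{def:spaces}(b) — whose members are manifestly in $C^\infty((0,\infty)^2)\cap C^0_0([0,\infty)^2)$ and for which $u - u_0$ and its $D$-derivatives are genuinely continuous up to $\{x=0\}$ — a standard approximation argument promotes it to all of $U$: the estimate shows the map $u \mapsto (\partial_t^m D^\ell u,\ \partial_t^m D^\ell(u-u_0))$ is bounded from the dense subclass into $C^0$-type spaces with the displayed weighted sup-norms, hence extends continuously to $U$, and the limit is continuous because a uniform limit of continuous functions is continuous.

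I expect the main obstacle to be purely organizational rather than analytic: verifying that for each of the two weights $\pm\delta$ (after the shift by $N-m-1$) there is genuinely an index pair $(\alpha,N)\in\AAA$ realizing it with the time exponent $2(\alpha+N)-3$ reducing to $2m$ (so that $\sup_t t^{2m}(\cdots)$ appears, not $\sup_t t^{2m}t^{\text{positive}}(\cdots)$, which would be weaker) and with $\ell(N-m,m,\alpha')$ large enough — i.e. tracing through the construction of $\AAA$ in cases (a),(b) of Section~\ref{ssec:heur_par} and the inequalities \eqref{linear_cond}/\eqref{explicit}. The analytic core (1D Sobolev embedding plus triangle inequality with the coefficient bound \eqref{est_coeff2}) is routine.
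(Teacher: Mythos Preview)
Your approach is correct and essentially identical to the paper's: both use the one-dimensional Sobolev embedding $H^1(\R)\hookrightarrow C^0(\R)$ in the logarithmic variable, split at $x=1$ with weights $\pm\delta$, and combine with the coefficient bound \eqref{est_coeff2} via the triangle inequality. One minor slip: the term of $\vertiii{u}$ you actually need is the \emph{first} line of \eqref{norm_sol} (the $\sup_t$ term with weight $\alpha'+N-m-\tfrac32$ and derivative count $\ell(\cdot)+2$), taking $(\alpha,N)=(\tfrac12,m+1)\in\AAA$ so that the spatial weight becomes $\pm\delta$ and the time weight becomes $t^{2m}$ --- you state this correctly at the end, but earlier you point to the third (time-integral) line with $\alpha'+N-m-1=\delta$, which would not give a $\sup_t$ bound.
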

\begin{proof}
A proof for an analogous estimate is contained in \cite[est.~(8.5)]{ggko.2014} so that we only sketch the arguments here. First we may show that, passing to the logarithmic variable $s := \ln x$ and using a cut-off argument in combination with the standard embedding $H^1(\R) \hookrightarrow C^0(\R)$, the following estimates hold
\begin{equation}\label{est_interval}
\vertii{v-v_0}_{(0,1]} \lesssim \verti{v-v_0}_{1,\delta} \quad \mbox{and} \quad \vertii{v}_{[1,\infty)} \lesssim \verti{v}_{1,-\delta} \quad \mbox{for any locally integrable } \, v,
\end{equation}
where the constants are independent of $\delta$ and $\vertii{v}_A := \sup_{x \in A} \verti{v(x)}$ for any set $A \subset (0,\infty)$. estimate~\eqref{est_interval} in combination with Lemma~\ref{lem:est_coeff} shows
\begin{eqnarray*}
\sup_{t \ge 0} t^{2 m} \vertii{\partial_t^m D^\ell (u-u_0)}^2 &\lesssim& \sup_{t \ge 0} t^{2 m} \vertii{\partial_t^m D^\ell u - \delta_{\ell,0} \frac{\d^m u_0}{\d t^m}}_{(0,1]}^2 + \sup_{t \ge 0} t^{2 m} \verti{\frac{\d^m u_0}{\d t^m}}^2 \\
&& + \sup_{t \ge 0} t^{2 m} \vertii{\partial_t^m D^\ell u}_{[1,\infty)}^2 \\
&\stackrel{\eqref{est_coeff2}, \,\eqref{est_interval}}{\lesssim}& \sup_{t \ge 0} t^{2 m} \left(\verti{\partial_t^m u - \frac{\d^m u_0}{\d t^m}}_{\ell+1,\delta}^2 + \verti{\partial_t^m u}_{\ell+1,-\delta}^2\right) + \vertiii{u}^2\\
&\stackrel{\eqref{norm_sol}}{\lesssim}& \vertiii{u}^2.
\end{eqnarray*}
By the triangle inequality and again using estimate~\eqref{est_coeff2}, we obtain \eqref{est_c0norm}.
\end{proof}
\subsection{Rigorous treatment of the linear equation\label{ssec:rigorous}}
In this section we prove our main result for the linear equation:
\begin{proposition}\label{prop:main_lin}
Suppose $N_0 \in \N$ and $\delta > 0$ is chosen sufficiently small. Furthermore, suppose that conditions~\eqref{linear_cond} are fulfilled (cf.~\eqref{explicit} for explicitly chosen indices). Then for any $f \in F$ and $u^{(0)} \in U_0$ there exists exactly one solution $u = S\left[u^{(0)},f\right] \in U$ of the linear degenerate-parabolic problem \eqref{lin_cauchy}. This solution fulfills the maximal-regularity estimate \eqref{mr_main}.
\end{proposition}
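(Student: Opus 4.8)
The plan is to treat Proposition~\ref{prop:main_lin} as the rigorous counterpart of the formal computation carried out in Sections~\ref{ssec:formal} and \ref{ssec:heur_par}: the a priori estimate \eqref{mr_main} has essentially been assembled there, and what remains is (i) to justify the manipulations leading to \eqref{mr_main} for sufficiently regular solutions, (ii) to produce a solution $u \in U$ by approximation, and (iii) to read off uniqueness from \eqref{mr_main}. Step (iii) is immediate once \eqref{mr_main} is available on $U$: if $u_1, u_2 \in U$ both solve \eqref{lin_cauchy} with the same data, then $u_1 - u_2$ solves it with $f \equiv 0$ and $u^{(0)} \equiv 0$, hence $\vertiii{u_1 - u_2} \lesssim \vertiii{0}_0 + \vertiii{0}_1 = 0$.

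For the a priori estimate I would first make rigorous the basic differential maximal-regularity bound \eqref{maxreg_diff} for the auxiliary problem \eqref{lin_cauchy_alt}. On the dense class of functions from Definition~\ref{def:spaces} --- smooth, compactly supported in $[0,\infty)$ modulo the generalized power series near $x = 0$ --- all the operations are legitimate: the commutation $D x = x(D+1)$ and the substitution $s = \ln x$, the integration by parts yielding formal coercivity $(u, P(D) u)_\alpha \gtrsim_\alpha \verti{u}_{2,\alpha}^2$ under \eqref{coerc_cond} (cf.~\cite[Prop.~5.3]{ggko.2014}), and the testing of \eqref{lin_pde_alt} against $u$ and then against the equation itself to pick up control of $\partial_t u$ and the trace term; multiplying by $t^\sigma$ and integrating gives \eqref{maxreg_int}. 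Applying \eqref{maxreg_int} to each transformed equation \eqref{lin_wnm} --- obtained by applying $\bigl(\prod_{n^\prime = 0}^{n-1} p(D-n^\prime)\bigr)\bigl(\prod_{i \in J_n}(D-i)\bigr)$ and $\partial_t^m$ to \eqref{lin_pde} --- with weights $\alpha^\prime = \alpha \pm \delta$ and indices $k(n,m,\alpha^\prime)$ fixed by \eqref{explicit}, I then run the two absorption arguments verbatim: Absorption~I (Fig.~\ref{fig:absorb1}) trades the remnant $x q_n(D) \partial_t^{m+1} v^{(n)}$ against a lower estimate in the chain $(N,0), (N-1,1), \dots, (1,N-1)$ using \eqref{same_alpha}; Absorption~II (Fig.~\ref{fig:absorb2}) absorbs the lower-weight time integrals via the $\alpha \mapsto \alpha - \frac 1 2$ shift, the $N \mapsto N-1$, $\alpha \mapsto \alpha + \frac 1 2$ step, and, for the only surviving case $(\alpha, N, m) = (\alpha, 1, 0)$ with $\alpha^\prime \in (\frac 1 2, 1)$, the anisotropic Hardy inequality of Lemma~\ref{lem:anisotropic}; the case $\alpha = \frac 1 2$, $N = 1$ has no remnant and produces the data term $\vertiii{u^{(0)}}_{*,0}$ of \eqref{norm_0star}. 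This yields \eqref{mr_star}; the elliptic maximal regularity of Proposition~\ref{prop:elliptic} (itself a consequence of Lemma~\ref{lem:hardy}) then converts the $w^{(n)}$-norms back to $u$-norms and $\vertiii{\cdot}_{*,0}$ to $\vertiii{\cdot}_0$, giving \eqref{mr_main} on the dense class, which extends to all of $U$ by density.

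Existence I would obtain by an approximation scheme compatible with the weighted norms --- for instance a Galerkin discretization in $s = \ln x$ on a truncated interval using the coercive forms $(\cdot, p(D-n)\cdot)_\alpha$, or equivalently the truncated-domain construction underlying \cite[Th.~3.1]{ggko.2014} --- applied first to data $(u^{(0)}, f)$ in the dense classes of Definition~\ref{def:spaces}. The approximants are smooth, so the whole a priori chain above applies to them with constants uniform in the approximation parameter; weak compactness then produces a limit $u \in U$ solving \eqref{lin_cauchy} and obeying \eqref{mr_main}. For general $(u^{(0)}, f) \in U_0 \times F$, approximating by dense-class data and invoking \eqref{mr_main} on the differences exhibits a Cauchy sequence in $U$ whose limit is the desired $S[u^{(0)}, f]$; linearity of $S$ is immediate from the construction.

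The main obstacle is bookkeeping rather than analysis: one must verify that the explicit choice \eqref{explicit} of $\ell(n,m,\alpha^\prime)$ (equivalently of $k(n,m,\alpha^\prime)$) simultaneously satisfies all of \eqref{linear_cond} --- and is compatible with the later nonlinear conditions \eqref{cond_l_non} --- and that the weight set $\AAA$ assembled in parts~(a) and (b) of Section~\ref{ssec:heur_par} is closed under the shifts $\alpha \mapsto \alpha \pm \frac 1 2$ used in Absorption~II, so that every remnant term genuinely lands on the left-hand side of another summand in \eqref{wnm_sum}. A secondary point is the handling of the $t = 0$ traces of $\partial_t^m w^{(n)}$: one checks that for every $(\alpha, N) \in \AAA$ with $(\alpha, N) \neq (\frac 1 2, 1)$ the time weight $t^{2(\alpha + N) - 3}$ is integrable at $t = 0$ (condition \eqref{inequ_an}), so that no spurious initial-trace term survives and the data norm indeed collapses to \eqref{norm_0star} and finally to $\vertiii{u^{(0)}}_0$.
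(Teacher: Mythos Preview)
Your outline is broadly sound, but the existence strategy differs from the paper's, and your handling of the initial-trace terms contains a genuine confusion. The paper does \emph{not} build the solution for general $N_0$ from a fresh Galerkin or truncated-domain scheme. It starts from the unique solution $u$ already furnished by the $N_0=1$ theory of \cite[Prop.~7.6]{ggko.2014} and \emph{upgrades its regularity}: a bootstrap in $(n,m)$ (the ``qualitative argument for regularity'') first shows, via the equations \eqref{lin_wnm} and elliptic regularity (Proposition~\ref{prop:elliptic}), that the norms in \eqref{finite} are finite for every $\tau>0$; only then are the testing arguments for \eqref{mr_wnm} made rigorous with spatial cut-offs $\eta_R$, using the finiteness just established to pass $R\to\infty$. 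Uniqueness then comes for free from the nesting of the $U$-spaces in $N_0$, not from \eqref{mr_main}. Your Galerkin route could in principle work, but you would need to verify that the projection is compatible with the full hierarchy \eqref{lin_wnm} and its commutator structure, which you have not addressed.

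On the traces: your claim that ``the time weight $t^{2(\alpha+N)-3}$ is integrable at $t=0$ (condition~\eqref{inequ_an}), so that no spurious initial-trace term survives'' is wrong on its face. Condition~\eqref{inequ_an} asserts that the exponent $2(\alpha+N)-3$ is \emph{nonnegative}, not that the weight is integrable; and a positive exponent kills the boundary contribution $t^{2\sigma}\verti{\partial_t^m w^{(n)}_{|t=\tau}}^2_{\dots}$ as $\tau\searrow 0$ only if that norm does not blow up --- this is precisely the content of \eqref{conv_init}, which the paper proves by an induction on $n+m$ feeding each instance of \eqref{mr_wnm} into the next. For smooth Galerkin approximants boundedness at $t=0$ would be automatic, so your scheme is salvageable here, but the reason you give is not. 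You also skip the last step of the paper's proof: passing from \eqref{mr_star} to \eqref{mr_main} via Proposition~\ref{prop:elliptic} requires checking its hypothesis~\eqref{ell_assume}, i.e.\ that no powers $x^j$ with $j\notin K_{N_0}$ contaminate the expansion of $u$; the paper does this using the linear equation and the known $N_0=1$ expansion, and it is not a mere formality.
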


\begin{proof}[of Proposition~\ref{prop:main_lin}]
The statement is the generalization of \cite[Prop.~7.6]{ggko.2014} for $N_0 = 1$ to arbitrary $N_0 \in \N$. Since our Banach spaces $U$ and $F$ are nested for increasing $N_0$, uniqueness is already clear and it remains to prove existence. As Proposition~\ref{prop:main_lin} for $N_0 = 1$ is already proven, there exists a unique solution $u$ of \eqref{lin_cauchy} lying in the space $U$ for $N_0 = 1$. By approximation (cf.~Definition~\ref{def:spaces}), we may without loss of generality assume that $f \in C^\infty\left((0,\infty)^2\right) \cap C_0^0\left([0,\infty)\right)$ and $u^{(0)} \in C^\infty\left((0,\infty)\right) \cap C_0^0\left([0,\infty)\right)$, with the expansions $u^{(0)}(x) = u^{(0)}_0 + u^{(0)}_\beta x^\beta$ for $x \ll 1$, and $f(t,x) = \sum_{i \in K_{N_0} \setminus \{0,\beta\}} f_i(t) x^i$ for $x \ll 1$, where the $f_i: (0,\infty) \to \R$ are smooth functions of time. By standard parabolic theory in the bulk and using \cite[Prop.~7.6]{ggko.2014} also $u \in C^\infty\left((0,\infty)^2\right) \cap C^0_0\left([0,\infty)^2\right)$ with $u(t,x) = u_0(t) + u_\beta(t) x^\beta + o\left(x^\beta\right)$ as $x \searrow 0$.

\subsubsection*{A qualitative argument for regularity}
Here we argue why the solution indeed has additional spatial regularity. As we have observed before, this requires higher regularity in time. At the basis of our reasoning is the existence and uniqueness result for the linear problem \eqref{lin_cauchy} given by \cite[Prop.~7.6]{ggko.2014}. In particular we have finiteness of
\[
\int_0^\infty t^{2 \beta - 1} \verti{\partial_t w^{(1)}}_{k(1,0,\beta\pm\delta),\beta - 1 \pm \delta}^2 \d t \stackrel{\eqref{def_wn}}{\sim} \int_0^\infty t^{2 \beta - 1} \verti{\partial_t u}_{\ell(1,0,\beta\pm\delta),\beta - 1 \pm \delta}^2 \d t
\]
and thus we may use finiteness of the norms $\verti{\partial_t u}_{\ell(1,0,\beta\pm\delta),\beta - 1 \pm \delta}^2$ for some time $t = \tau > 0$ to solve
\[
x \partial_t (\partial_t u) + p(D) (\partial_t u) = \partial_t f \quad \mbox{for } \, t > \tau \, \mbox{ and } \, x > 0
\]
and infer that $\partial_t u$ has additional regularity for times $t > \tau$. In principle such a reasoning is possible but as it was noted already in \cite[Sec.~2]{ggko.2014}, the arguments there\footnote{We refer to the discussion in \cite[Sec.~2, Eqs.~(2.8)--(2.12)]{ggko.2014} and the (rigorous) time discretization in \cite[Prop.~7.6]{ggko.2014}. Utilizing the linear equations \eqref{lin_v1} and \eqref{lin_v2} (where $v^{(1)} \stackrel{\eqref{def_vwr}}{=} w^{(1)}$ and the difference between $v^{(2)}$ and $w^{(2)}$ is immaterial due to the choice of the weight $1 + \delta < 2 \beta$), the integrals in \eqref{prt_w12} are finite through control of the spatial integrals $\int_0^\infty t \left(\verti{v^{(1)}}_{k(1,0,1-\delta)+4,1-\delta}^2 + \verti{g^{(1)}}_{k(1,0,1-\delta),1-\delta}^2\right) \d t$ and $\int_0^\infty t \left(\verti{w^{(2)}}_{k(2,0,\delta)+4,1+\delta}^2 + \verti{r^{(2)}}_{k(2,0,\delta),1+\delta}^2\right) \d t$, respectively.} were also suitable to obtain finiteness of the integrals
\begin{subequations}\label{prt_w12}
\begin{equation}
\int_0^\infty t \verti{\partial_t w^{(1)}}_{k(1,0,1-\delta),-\delta}^2 \d t \sim \int_0^\infty t \verti{\partial_t u}_{\ell(1,0,1-\delta),-\delta}^2 \d t
\end{equation}
and
\begin{equation}
\int_0^\infty t \verti{\partial_t w^{(2)}}_{k(2,0,\delta),\delta}^2 \d t \sim \int_0^\infty t \verti{\partial_t u - \frac{\d u_0}{\d t}}_{\ell(2,0,\delta),\delta}^2 \d t.
\end{equation}
\end{subequations}
Now we may use finiteness of the norms $\verti{\partial_t u}_{\ell(1,0,1-\delta),-\delta}$ and $\verti{\partial_t u - \frac{\d u_0}{\d t}}_{\ell(2,0,\delta),\delta}$ for some time $\tau_\nu > 0$ with $\tau_\nu \searrow 0$ as $\nu \to \infty$. Applying \cite[Prop.~7.6]{ggko.2014} implies that also $\vertiii{\partial_t u(\cdot+\tau_\nu)}$ for $N_0 = 1$ (and with appropriately chosen derivative numbers) is finite\footnote{Notice that the solution of \cite[Prop.~7.6]{ggko.2014} coincides with $\partial_t u$, because uniqueness under the assumption $\int_{\tau_\nu}^\infty \verti{\partial_t u}_{\ell\left(1,0,1-\delta\right),- \delta} \, \d t < \infty$ holds true as can be seen in the uniqueness proof for \cite[Prop.~7.6]{ggko.2014}.}. In fact, since the arguments in \cite[Sec.~7]{ggko.2014} were also applicable for general weights in the interval $\left(\frac 1 2, 1\right)$, we also obtain
\[
\sup_{t \ge \tau_\nu} (t - \tau_\nu)^{2 \alpha - 1} \verti{\partial_t w^{(1)}}_{k(1,1,\alpha^\prime),\alpha^\prime-1}^2 \d t < \infty
\]
and
\[
\int_{\tau_\nu}^\infty \left(t - \tau_\nu\right)^{2 \alpha - 1} \left(\verti{\partial_t^2 w^{(1)}}_{k(1,1,\alpha^\prime),\alpha^\prime-1}^2 + \verti{\partial_t w^{(1)}}_{k(1,1,\alpha^\prime),\alpha^\prime}^2\right) \d t < \infty
\]
for all admissible weights $\alpha$. In particular the integral
\[
\int_{\tau_\nu}^\infty \left(t - \tau_\nu\right) \verti{\partial_t^2 w^{(1)}}_{k(1,1,1-\delta),-\delta}^2 \d t \sim \int_{\tau_\nu}^\infty \left(t - \tau_\nu\right) \verti{\partial_t^2 u}_{\ell(1,1,1-\delta),-\delta}^2 \d t
\]
is finite and we may argue as above that
\[
\int_{\tau_\nu}^\infty \left(t - \tau_\nu\right) \verti{\partial_t^2 w^{(2)}}_{k(2,1,\delta),\delta}^2 \d t \sim \int_{\tau_\nu}^\infty \left(t - \tau_\nu\right) \verti{\partial_t^2 u - \frac{\d^2 u_0}{\d t^2}}_{\ell(2,1,\delta),\delta}^2 \d t
\]
is finite. Hence $\verti{\partial_t^2 u}_{\ell(1,1,1-\delta),-\delta} < \infty$ and $\verti{\partial_t^2 u - \frac{\d^2 u_0}{\d t^2}}_{\ell(2,1,\delta),\delta} < \infty$ for some $t = \tilde \tau_\nu > \tau_\nu$, where $\tilde \tau_\nu \searrow 0$ as $\nu \to \infty$. Apparently the reasoning can be boot strapped and (taking the limit $\nu \to \infty$) we obtain
\begin{subequations}
\begin{equation}
\sup_{t \ge \tau} \verti{\partial_t^m w^{(1)}}_{k(1,m,\alpha^\prime)+2,\alpha^\prime-\frac 1 2}^2 < \infty, \quad \int_{\tau}^\infty \verti{\partial_t^{m+1} w^{(1)}}_{k(1,m,\alpha^\prime),\alpha^\prime-1}^2 \d t< \infty,
\end{equation}
and
\begin{equation}\label{dtmw1_fin}
\int_{\tau}^\infty \verti{\partial_t^m w^{(1)}}_{k(1,m,\alpha^\prime)+4,\alpha^\prime}^2 \d t < \infty
\end{equation}
\end{subequations}
for all $\tau > 0$, all $m = 0, \cdots, N_0-1$, and all admissible $\alpha$.

\medskip

In order to obtain additional spatial regularity, we first observe
\[
\int_\tau^\infty \verti{\partial_t^{m+1} w^{(2)}}_{k(2,m,\alpha^\prime),\alpha^\prime}^2 \d t \stackrel{\eqref{same_alpha}}{\lesssim} \int_\tau^\infty \verti{\partial_t^{m+1} w^{(1)}}_{k\left(1,m+1,\alpha^\prime\right)+4,\alpha^\prime}^2 \d t < \infty \quad \mbox{for all } \, \tau > 0.
\]
Then we may use
\[
\partial_t^m v^{(3)} \stackrel{\eqref{def_vwr}}{=} p(D-2) \partial_t^m w^{(2)} \stackrel{\eqref{lin_wnm}}{=} \partial_t^m r^{(2)} + x q_2(D) \partial_t^{m+1} v^{(2)} - x \partial_t^{m+1} w^{(2)} \quad \mbox{for } \, t, x > 0
\]
and therefore
\begin{equation}\label{est_iter}
\begin{aligned}
\verti{\partial_t^m v^{(3)}}_{k(2,m,\alpha^\prime),\alpha^\prime+1} &\lesssim \verti{\partial_t^m r^{(2)}}_{k(2,m,\alpha^\prime),\alpha^\prime + 1} + \verti{q_2(D) \partial_t^{m+1} v^{(2)}}_{k(2,m,\alpha^\prime),\alpha^\prime} + \verti{\partial_t^{m+1} w^{(2)}}_{k(2,m,\alpha^\prime),\alpha^\prime} \nonumber\\
&\hspace{-18pt}\stackrel{\eqref{def_vwr}, \eqref{same_alpha}}{\lesssim} \verti{\partial_t^m r^{(2)}}_{k(2,m,\alpha^\prime),\alpha^\prime + 1} + \verti{\partial_t^{m+1} w^{(1)}}_{k(1,m+1,\alpha^\prime)+4,\alpha^\prime}.
\end{aligned}
\end{equation}
Since by construction $\int_\tau^\infty \verti{\partial_t^m r^{(2)}}_{k(2,m,\alpha^\prime),\alpha^\prime + 1}^2 \d t < \infty$ (cf.~\eqref{norm_rhs_star}) and by \eqref{dtmw1_fin}, we also have
\[
\int_\tau^\infty \verti{\partial_t^m v^{(3)}}_{k(2,m,\alpha^\prime),\alpha^\prime+1}^2 \d t < \infty \quad \mbox{for all } \, \tau > 0.
\]
Our aim is to show
\begin{equation}\label{w2_est_v3}
\verti{\partial_t^m w^{(2)}}_{k(2,m,\alpha^\prime)+4,\alpha^\prime+1} \lesssim \verti{\partial_t^m v^{(3)}}_{k(2,m,\alpha^\prime),\alpha^\prime+1} \quad \mbox{for } \, t > 0.
\end{equation}
This follows by applying elliptic regularity of $p(D-2)$ (Proposition~\ref{prop:elliptic}), but care has to be taken as the decay assumptions \eqref{ell_assume} have to be satisfied: As in the proof of \cite[Lem.~B.3]{ggko.2014} we may approximate $\partial_t^m v^{(3)}$ by a sequence of functions $\psi^{(\nu)} \in C^\infty_0((0,\infty))$ in the norm $\verti{\cdot}_{k(2,m,\alpha^\prime),\alpha^\prime+1}$. Solving $p(D-2) \omega^{(\nu)} = \psi^{(\nu)}$ as in \cite[Lem.~B.3]{ggko.2014}, we find a smooth solution $\omega^{(\nu)}$ such that $D^\ell \omega^{(\nu)} = O(x^2)$ as $x \searrow 0$ and $D^\ell \omega^{(\nu)} = O\left(x^{\frac 3 2 - \beta}\right)$ as $x \nearrow \infty$ for all $\ell \ge 0$. By Proposition~\ref{prop:elliptic}, $\verti{\omega^{(\nu)} - \omega^{\left(\nu^\prime\right)}}_{k(2,m,\alpha^\prime)+4,\alpha^\prime+1} \lesssim \verti{\psi^{(\nu)} - \psi^{\left(\nu^\prime\right)}}_{k(2,m,\alpha^\prime),\alpha^\prime+1} \to 0$ as $\nu, \nu^\prime \to \infty$ and hence $\verti{\omega^{(\nu)} - \omega}_{k(2,m,\alpha^\prime)+4,\alpha^\prime+1} \to 0$ as $\nu \to \infty$ for some locally integrable $\omega$. Since $\verti{\omega^{(\nu)}}_{k(2,m,\alpha^\prime)+4,\alpha^\prime+1} \lesssim \verti{\psi^{(\nu)}}_{k(2,m,\alpha^\prime),\alpha^\prime+1}$, in particular $\verti{\omega}_{k(2,m,\alpha^\prime)+4,\alpha^\prime+1} \lesssim \verti{\partial_t^m v^{(3)}}_{k(2,m,\alpha^\prime),\alpha^\prime+1}$ for $t > 0$ by continuity of the norms. Hence, for establishing \eqref{w2_est_v3} it remains to prove
\begin{equation}\label{om_eq_w}
\omega = \partial_t^m w^{(2)} \quad \mbox{almost everywhere.}
\end{equation}
Note that
\[
p(D-2) \left(\omega - \partial_t^m w^{(2)}\right) = p(D-2) \lim_{\nu \to \infty} \omega^{(\nu)} - \partial_t^m v^{(3)} = \lim_{\nu \to \infty} \psi^{(\nu)} - \partial_t^m v^{(3)} = 0
\]
almost everywhere and hence
\begin{equation}\label{spanned}
\omega - \partial_t^m w^{(2)} \in \ker p(D-2) = \spanned \left\{x^2, x^{\beta+2},x^{\frac 3 2-\beta}, x^{\frac 1 2}\right\}.
\end{equation}
Because of $\verti{\partial_t^m w^{(2)}}_{1-\delta} \lesssim \verti{\partial_t^m w^{(1)}}_{4+\verti{I_2},1-\delta} < \infty$, necessarily $\partial_t^m w^{(2)} = o\left(x^{1-\delta}\right)$ as $x \searrow 0$ and $x \nearrow \infty$ for $t > 0$. Together with $\verti{\omega}_{k(2,m,\alpha^\prime)+4,\alpha^\prime+1} < \infty$ therefore
\[
\omega - \partial_t^m w^{(2)} = o\left(x^{1-\delta}\right) \quad \mbox{as } \, x \searrow 0 \quad \mbox{and} \quad \omega - \partial_t^m w^{(2)} = o\left(x^{2-\delta}\right) \quad \mbox{as } \, x \nearrow \infty
\]
almost everywhere in $t > 0$. In view of \eqref{spanned} this implies \eqref{om_eq_w}.

 \medskip
  
Finiteness of $\sup_{t \ge \tau} \verti{\partial_t^m w^{(2)}}_{k(2,m,\alpha^\prime)+2,\alpha^\prime+\frac 1 2}^2$ for $\tau > 0$ can be obtained by the standard trace estimate
\[
\sup_{t \ge \tau} \verti{\partial_t^m w^{(2)}}_{k(2,m,\alpha^\prime)+2,\alpha^\prime+\frac 1 2}^2 \lesssim \int_\tau^\infty \left(\verti{\partial_t^{m+1} w^{(2)}}_{k(2,m,\alpha^\prime),\alpha^\prime}^2 + \verti{\partial_t^m w^{(2)}}_{k(2,m,\alpha^\prime)+4,\alpha^\prime+1}^2\right) \d t.
\]
A proof is contained e.g.~in \cite[Lem.~B.2]{ggko.2014}.

\medskip

Apparently the argument can be boot strapped and we obtain
\begin{subequations}\label{finite}
\begin{align}
&\sup_{t \ge \tau} \verti{\partial_t^m w^{(N-m)}}_{k(N-m,m,\alpha^\prime)+2,\alpha^\prime-N-m-\frac 3 2}^2 < \infty,\\
&\int_\tau^\infty \verti{\partial_t^{m+1} w^{(N-m)}}_{k(N-m,m,\alpha^\prime),\alpha^\prime+N-m-2}^2 \d t < \infty,
\end{align}
and
\begin{equation}
\int_\tau^\infty \verti{\partial_t^m w^{(N-m)}}_{k(N-m,m,\alpha^\prime)+4,\alpha^\prime+N-m-1}^2 \d t < \infty
\end{equation}
\end{subequations}
for all $\tau > 0$, all $N = 1, \cdots, N_0$, $m = 0, \cdots, N-1$, and all admissible $\alpha$.

\subsubsection*{Proof of estimate~\eqref{mr_star}}
Now we consider equation~\eqref{lin_wnm}, where $r^{(n)}$, $v^{(n)}$, and $w^{(n)}$ -- defined through \eqref{def_vwr} -- are smooth. We are aiming at deriving estimate~\eqref{mr_wnm}. Therefore it is more convenient to use the logarithmic variable $s := \ln x$, for which equation~\eqref{lin_wnm} reads
\begin{equation}\label{eq_wnm_s}
e^s \partial_t^{m+1} w^{(n)} + p(\partial_s - n) \partial_t^m w^{(n)} = \partial_t^m r^{(n)} + e^s q_n(\partial_s) \partial_t^{m+1} v^{(n)} \quad \mbox{for $t > 0$ and $s \in \R$.}
\end{equation}
In order to simplify the notation, we write
\begin{equation}\label{notation}
\omega := \partial_t^m w^{(n)}, \quad P(\zeta) := p(\zeta - n), \quad \mbox{and} \quad \varphi := \partial_t^m r^{(n)} + e^s q_n(\partial_s) \partial_t^{m+1} v^{(n)},
\end{equation}
that is, equation~\eqref{eq_wnm_s} can be rephrased as
\begin{equation}\label{eq_om_phi}
e^s \partial_t \omega + P(\partial_s) \omega = \varphi \quad \mbox{for $t > 0$ and $s \in \R$.}
\end{equation}
We take a cut off $\eta \in C^\infty(\R)$ with $\eta \ge 0$, $\eta(s) \equiv 1$ for $\verti{s} \le 1$, and $\eta(s) \equiv 0$ for $\verti{s} \ge 2$. Let $\eta_R(s) := \eta(s/R)$ and test equation~\eqref{eq_om_phi} with $e^{- 2 \mu s} \eta_R^2 w^{(n)}$ in $L^2(\R_s)$, where $\mu$ is in the coercivity range of $P(\zeta)$ (We specifically choose $\mu := \alpha^\prime+n-1$ later on.). Thus we arrive at
\[
\frac 1 2 \frac{\d}{\d t} \int_\R e^{- 2 \left(\mu - \frac 1 2\right) s} \eta_R^2 \omega^2 \d s + \int_\R e^{- 2 \mu s} \eta_R^2 \omega P(\partial_s) \omega \, \d s = \int_\R e^{- 2 \mu s} \eta_R^2 \varphi \omega \, \d s.
\]
Now we commute one factor $\eta_R$ with the differential operator $P(\partial_s)$. Since in the commutator of $\eta_R$ and $P(\partial_s)$ at least one derivative acts on $\eta_R$, we obtain through integration by parts
\begin{align*}
& \frac 1 2 \frac{\d}{\d t} \int_\R e^{- 2 \left(\mu - \frac 1 2\right) s} (\eta_R \omega)^2 \d s + \int_\R e^{- 2 \mu s} (\eta_R \omega) P(\partial_s) (\eta_R \omega) \, \d s \\
& \le \frac{1}{2 \eps} \int_\R e^{- 2 \mu s} (\eta_R \varphi)^2 \, \d s + \frac{\eps}{2} \int_\R e^{- 2 \mu s} (\eta_R \omega)^2 \, \d s + \frac C R \int_{-2R}^{2R} e^{- 2 \mu s} \left(\omega^2 + (\partial_s\omega)^2 + (\partial_s^2\omega)^2\right) \d s,
\end{align*}
where $C > 0$ is independent of $R$ and $\eps > 0$ is arbitrary\footnote{All estimates and constants in this part only depend on $\kappa$ and $\mu$.}. As a next step, we use coercivity of $P(\partial_s)$ and obtain
\begin{equation}\label{basic}
\begin{aligned}
& \frac{\d}{\d t} \int_\R e^{- 2 \left(\mu - \frac 1 2\right) s} (\eta_R \omega)^2 \d s + \int_\R e^{- 2 \mu s} \left((\eta_R \omega)^2 + (\partial_s \eta_R \omega)^2 + (\partial_s^2 \eta_R \omega)^2\right) \, \d s\\
&\quad \lesssim \int_\R e^{- 2 \mu s} (\eta_R \varphi)^2 \, \d s + \frac C R \int_{-2R}^{2R} e^{- 2 \mu s} \left(\omega^2 + (\partial_s\omega)^2 + (\partial_s^2\omega)^2\right) \d s.
\end{aligned}
\end{equation}
Estimate~\eqref{basic} is a basic estimate that has to be combined with a higher-order estimate to arrive at \eqref{mr_wnm}. Therefore, we again consider equation~\eqref{eq_om_phi} and apply the operator $e^{- 2 \mu s} (\partial_s-1)^{\kappa+2} \eta_R$ to it (where $\kappa = k(n,m,\alpha^\prime)$). Testing with $\partial_s^{\kappa+2} (\eta_R \omega)$ in $L^2(\R_s)$, we obtain after integrating by parts and using a standard interpolation estimate
\begin{equation}\label{est_higher}
\begin{aligned}
&\frac{\d}{\d t} \int_\R e^{-2 \left(\mu - \frac 1 2\right) s} \left(\partial_s^{\kappa+2}(\eta_R \omega)\right)^2 \d s + \int_\R e^{-2 \mu s} \left(\partial_s^{\kappa+4} (\eta_R \omega)\right)^2 \d s \\
&\lesssim \eps^{-1} \sum_{j = 0}^\kappa \int_\R e^{- 2 \mu s} \verti{\partial_s^j \left(\eta_R \varphi\right)}^2 \, \d s + \eps \sum_{j = 0}^{\kappa+4} \int_\R e^{-2 \mu s} \left(\partial_s^j \left(\eta_R \omega\right)\right)^2 \d s  + \frac 1 R \int_{-2 R}^{2 R} e^{-2 \mu s} \left(\omega^2 + (\partial_s^{\kappa+4} \omega)^2\right) \d s,
\end{aligned}
\end{equation}
where $\eps > 0$ is arbitrary. By interpolation and increasing $C$, the second term in the second line can be absorbed. Using \eqref{basic}, \eqref{est_higher}, and interpolating once more, we obtain, after undoing the transformation $s = \ln x$,
\[
\frac{\d}{\d t} \verti{\eta_R \omega}_{\kappa+2,\mu-\frac 1 2}^2 + \verti{\eta_R \omega}_{\kappa+4,\mu}^2 \lesssim \verti{\eta_R \varphi}_{\kappa,\mu}^2 + \frac 1 R \verti{\eta_{2 R} \omega}_{\kappa+4,\mu}^2.
\]
Multiplying with $t^{2 \sigma}$, where $\sigma := \alpha + n + m - \frac 3 2$ and integrating in time (assuming $\tau^\prime > \tau > 0$), we obtain
\begin{eqnarray}\nonumber
\lefteqn{\sup_{t \in \left[\tau, \tau^\prime\right]} t^{2 \sigma} \verti{\eta_R \omega}_{\kappa+2,\mu-\frac 1 2}^2 + \int_{\tau}^{\tau^\prime} t^{2 \sigma} \verti{\eta_R \omega}_{\kappa+4,\mu}^2 \d t} \\
&\lesssim& \tau^{2 \sigma} \verti{\eta_R \omega_{|t=\tau}}_{\kappa+2,\mu-\frac 1 2}^2 + \int_{\tau}^{\tau^\prime} t^{2 \sigma} \verti{\eta_R \varphi}_{\kappa,\mu}^2 \d t + 2 \sigma \int_{\tau}^{\tau^\prime} t^{2 \sigma - 1} \verti{\eta_R \omega}_{\kappa+2,\mu-\frac 1 2}^2 \d t \nonumber\\
&&+ \frac 1 R \int_{\tau}^{\tau^\prime} t^{2 \sigma} \verti{\eta_{2 R} \omega}_{\kappa+4,\mu}^2 \d t. \label{est_s_var}
\end{eqnarray}
Due to \eqref{finite} and \eqref{notation} all terms appearing in \eqref{est_s_var} remain finite in the limit $R \to \infty$ (note that $\verti{\eta_{2 R} \omega}_{\kappa+4,\mu} \lesssim \verti{\partial_t^m w^{(n)}}_{k(n,m,\alpha^\prime)+4,\alpha^\prime+n-1}$ for $t > 0$)  so that we obtain
\begin{align*}
& \sup_{t \in \left[\tau, \tau^\prime\right]} t^{2 \sigma} \verti{\omega}_{\kappa+2,\mu-\frac 1 2}^2 + \int_{\tau}^{\tau^\prime} t^{2 \sigma} \verti{\omega}_{\kappa+4,\mu}^2 \d t \\
& \quad \lesssim \tau^{2 \sigma} \verti{\omega_{|t=\tau}}_{\kappa+2,\mu-\frac 1 2}^2 + \int_{\tau}^{\tau^\prime} t^{2 \sigma} \verti{\varphi}_{\kappa,\mu}^2 \d t + 2 \sigma \int_{\tau}^{\tau^\prime} t^{2 \sigma - 1} \verti{\omega}_{\kappa+2,\mu-\frac 1 2}^2 \d t.
\end{align*}
Undoing the notational change \eqref{notation} and increasing the set $\left[\tau,\tau^\prime\right]$, we end up with \eqref{mr_wnm}, provided that for $\alpha + n + m > \frac 3 2$
\begin{equation}\label{conv_init}
\tau^{2 \sigma} \verti{\omega_{|t=\tau}}_{\kappa+2,\mu-\frac 1 2}^2 = \tau^{2 (\alpha+n+m)-3} \verti{\partial_t^m w^{(n)}_{|t=\tau}}_{k\left(n,m,\alpha^\prime\right)+2,\alpha^\prime+n-\frac 3 2}^2 \to 0 \quad \mbox{as } \, \tau \searrow 0
\end{equation}
holds true at least for a subsequence. This is already clear for $(n,m) = (1,0)$ and $\alpha^\prime \in \left(\frac 1 2, 1\right)$ from the extension of the discretization argument in \cite[Sec.~7]{ggko.2014}. From \eqref{mr_wnm} for $(n,m) = (1,0)$, we get
\begin{equation}\label{lim1}
\int_0^\infty t^{2 \alpha-1} \left(\verti{\partial_t w^{(1)}}_{k\left(1,0,\alpha^\prime\right),\alpha^\prime-1}^2 + \verti{w^{(1)}}_{k\left(1,0,\alpha^\prime\right)+4,\alpha^\prime}^2\right) \, \d t < \infty \quad \mbox{for } \, \alpha^\prime \in \left(\frac 1 2, 1\right).
\end{equation}
Utilizing
\[
\int_0^\infty t^{2 \alpha} \verti{w^{(2)}}_{k\left(2,0,\alpha^\prime\right)+2,\alpha^\prime+\frac 1 2}^2 \, \d t \stackrel{\eqref{alpha+12}}{\lesssim} \int_0^\infty t^{2 \alpha} \verti{w^{(1)}}_{k\left(1,0,\alpha^\prime + \frac 1 2\right)+4,\alpha^\prime+\frac 1 2}^2 \, \d t < \infty \quad \mbox{for } \, \alpha^\prime \in \left(0,\frac 1 2\right),
\]
we conclude that \eqref{conv_init} holds for $(n,m) = (2,0)$ and $\alpha^\prime \in \left(0,\frac 1 2\right)$, too. Additionally, we have
\[
\int_0^\infty t^{2 \alpha} \verti{\partial_t w^{(1)}}_{k\left(1,1,\alpha^\prime\right)+2,\alpha^\prime-\frac 1 2}^2 \, \d t \stackrel{\eqref{alpha+12_alt}}{\lesssim} \int_0^\infty t^{2 \left(\alpha + \frac 1 2\right)-1} \verti{\partial_t w^{(1)}}_{k\left(1,0,\alpha^\prime+\frac 1 2\right), \left(\alpha^\prime+\frac 1 2\right) - 1}^2 \, \d t \stackrel{\eqref{lim1}}{<} \infty
\]
for $\alpha^\prime \in \left(0,\frac 1 2\right)$, that is, \eqref{conv_init} also holds for $(n,m) = (1,1)$ and $\alpha^\prime \in \left(0,\frac 1 2\right)$.

\medskip

Next, \eqref{mr_wnm} yields
\begin{equation}\label{lim2}
\int_0^\infty t^{2 \alpha+1} \left(\verti{\partial_t^2 w^{(1)}}_{k\left(1,1,\alpha^\prime\right),\alpha^\prime-1}^2 + \verti{\partial_t w^{(1)}}_{k\left(1,1,\alpha^\prime\right)+4,\alpha^\prime}^2\right) \, \d t < \infty \quad \mbox{for } \, \alpha^\prime \in \left(0,\frac 1 2\right).
\end{equation}
As
\[
\int_0^\infty t^{2 \alpha + 1} \verti{\partial_t w^{(1)}}_{k\left(1,1,\alpha^\prime\right)+2,\alpha^\prime-\frac 1 2}^2 \, \d t \stackrel{\eqref{alpha-12}}{\lesssim} \int_0^\infty t^{2 \left(\alpha - \frac 1 2\right) + 2} \verti{\partial_t w^{(1)}}_{k\left(1,1,\alpha^\prime-\frac 1 2\right)+4,\alpha^\prime - \frac 1 2}^2 \, \d t \stackrel{\eqref{lim2}}{<} \infty
\]
for $\alpha^\prime \in \left(\frac 1 2, 1\right)$, \eqref{conv_init} is true for $(n,m) = (1,1)$ and $\alpha^\prime \in \left(\frac 1 2,1\right)$ and \eqref{mr_wnm} yields
\begin{equation}\label{lim3}
\int_0^\infty t^{2 \alpha+1} \left(\verti{\partial_t^2 w^{(1)}}_{k\left(1,1,\alpha^\prime\right),\alpha^\prime-1}^2 + \verti{\partial_t w^{(1)}}_{k\left(1,1,\alpha^\prime\right)+4,\alpha^\prime}^2\right) \, \d t < \infty \quad \mbox{for } \, \alpha^\prime \in \left(\frac 1 2,1\right).
\end{equation}
Since we also have
\[
\int_0^\infty t^{2 \alpha + 1} \verti{\partial_t v^{(2)}}_{k\left(2,0,\alpha^\prime\right) + \verti{I_2}-1,\alpha^\prime}^2 \, \d t \stackrel{\eqref{same_alpha}}{\lesssim} \int_0^\infty t^{2 \alpha + 1} \verti{\partial_t w^{(1)}}_{k\left(1,1,\alpha^\prime\right)+4,\alpha^\prime}^2 \, \d t \stackrel{\eqref{lim2}}{<} \infty
\]
for $\alpha^\prime \in \left(0,\frac 1 2\right)$, we get from \eqref{mr_wnm}
\begin{equation}\label{lim4}
\int_0^\infty t^{2 \alpha+1} \left(\verti{\partial_t w^{(2)}}_{k\left(2,0,\alpha^\prime\right),\alpha^\prime}^2 + \verti{w^{(2)}}_{k\left(2,0,\alpha^\prime\right)+4,\alpha^\prime}^2\right) \, \d t < \infty \quad \mbox{for } \, \alpha^\prime \in \left(0,\frac 1 2\right).
\end{equation}
This enables us to estimate
\[
\int_0^\infty t^{2 \alpha} \verti{w^{(2)}}_{k\left(2,0,\alpha^\prime\right)+2,\alpha^\prime+\frac 12}^2 \, \d t \stackrel{\eqref{alpha-12}}{\lesssim} \int_0^\infty t^{2 \left(\alpha - \frac 1 2\right) + 1} \verti{w^{(2)}}_{k\left(2,0,\alpha^\prime-\frac 1 2\right)+4,\left(\alpha^\prime-\frac 12\right)+1}^2 \, \d t \stackrel{\eqref{lim4}}{<} \infty
\]
for $\alpha^\prime \in \left(\frac 1 2, 1\right)$, thus proving \eqref{conv_init} with $(n,m) = (2,0)$ and $\alpha^\prime \in \left(\frac 1 2,1\right)$.

\medskip

Induction on $n+m$ verifies \eqref{conv_init} for all $\alpha+n+m > \frac 3 2$.

\medskip

Now, starting from \eqref{mr_wnm}, we follow the reasoning of Section~\ref{ssec:heur_par} and arrive at estimate~\eqref{mr_star}.

\subsubsection*{Proof of estimate~\eqref{mr_main}}
To obtain the desired maximal-regularity estimate \eqref{mr_main}, we only need to be able to apply Proposition~\ref{prop:elliptic}, that is, we need to verify \eqref{ell_assume}. We exemplify the reasoning by treating the last line of \eqref{norm_sol_star}. For a fixed value $\left(\alpha,N\right) \in \AAA$ it suffices to show
\begin{equation}\label{dellu}
D^\ell \partial_t^m u = \sum_{i < \varrho} \frac{\d^m u_i}{\d t^m} i^\ell x^i + o\left(x^\varrho\right) \quad \mbox{as } \, x \searrow 0,
\end{equation}
where $\varrho := \alpha^\prime+N-m-1$, $\alpha^\prime = \alpha \pm \delta \in (0,1)$, and $\ell = 0,\cdots, k\left(N-m,m,\alpha^\prime\right)+4$.

\medskip

Due to $\vertiii{u}_* < \infty$, we have $D^\ell \partial_t^m w^{(N-m)}(x) = o(x^\varrho)$ as $x \searrow 0$. This implies $D^\ell \partial_t^m u(x) = D^\ell \pi^{(m)}(x) + D^\ell \tilde u(x)$, where $D^\ell \tilde u(x) = o(x^\varrho)$ as $x \searrow 0$ and $\pi^{(m)}(x)$ is a solution of the homogeneous equation $P(D) \pi^{(m)} = 0$ with $P(\zeta) := \left(\prod_{k = 0}^{N-m-1} p(\zeta-k)\right) \left(\prod_{i \in J_{N-m}} (\zeta-i)\right)$ (cf.~\eqref{def_wn}). Hence $\pi^{(m)} = \sum_j \frac{\d^m u_j}{\d t^m} x^j$, where the $u_j$ are smooth functions of time $t$ and $j$ is a zero of the polynomial $P(\zeta)$. In fact, the set of admissible exponents $j$ is smaller: Since we know that Proposition~\ref{prop:main_lin} also holds for $N_0 = 1$, we need to have $\pi^{(m)}(x) = \frac{\d^m u_0}{\d t^m} + \frac{\d^m u_\beta}{\d t^m} x^\beta + o(x^\beta)$ as $x \searrow 0$. Hence $j \in \{0,\beta\}$ or $\beta < j < N_0$.

\medskip

Now suppose that $\beta < j < N_0$ and $j \notin K_{N_0}$. Since $x^j$ does not appear in the expansion of $f$, we obtain from the linear equation \eqref{lin_pde} $\frac{\d u_{j-1}}{\d t} + p(j) u_j \equiv 0$, that is, if $\frac{\d^m u_j}{\d t^m} \not\equiv 0$ necessarily $\frac{\d^m u_{j-1}}{\d t^m} \not\equiv 0$. This implies $j - \floor{j} \in \{0,\beta\}$ (as otherwise negative $j$ would appear) thus contradicting the assumption $j \notin K_{N_0}$ (cf.~\eqref{def_kn}).

\medskip

Hence indeed \eqref{dellu} holds true and Proposition~\ref{prop:elliptic} yields estimate~\eqref{mr_main}.
\end{proof}

\section{The nonlinear problem\label{sec:nonlinear}}
\subsection{The main results\label{ssec:nonmain}}
In this section we prove our main theorem. For the treatment of the nonlinear problem \eqref{cauchy}, we need additional assumptions on the numbers $\ell(n,m,\alpha)$ which we already state here:
\begin{subequations}\label{cond_l_non}
\begin{align}
\ell\left(N-m,m^\prime,\alpha^\prime\right) &\ge \ell\left(N-m,m,\alpha^\prime\right) \quad \mbox{for } \, m^\prime \le m-1,\label{cond_l_non1} \\
\ell\left(1,m^\prime,\frac 1 2\pm\delta\right) &\ge \frac{\ell\left(N-m,m,\alpha^\prime\right)}{2} + 1 \quad \mbox{where } \begin{cases} 0 \le m^\prime \le m, \\ \alpha^\prime+N-m-1 < \beta, \end{cases} \label{cond_l_non20}\\
\ell\left(1,m^\prime,\frac 1 2\pm\delta\right) &\ge \ell\left(N-m,m,\alpha^\prime\right) + 3 \quad \mbox{where } \, \begin{cases} 0 \le m^\prime \le m, \\ \alpha^\prime+N-m-1 > \beta,\end{cases} \label{cond_l_non2}\\
\ell\left(N-m,m,\alpha^\prime-\frac 1 2\right) &\ge \ell\left(N-m,m,\alpha^\prime\right) + 2 \quad \mbox{for all } \alpha^\prime \in \left(\frac 1 2, 1\right), \label{cond_l_non3}\\
\ell\left(N-m-1,m,\alpha^\prime+\frac 1 2\right) &\ge \ell\left(N-m,m,\alpha^\prime\right) + 2 \quad \mbox{for all } \alpha^\prime\in \left(0, \frac 1 2\right), \label{cond_l_non4} \\
k &\ge \ell\left(1,m,\alpha^\prime\right)-2 \quad \mbox{for all } \, \alpha^\prime > \beta \, \mbox{ and } \, m \ge 0, \label{cond_l_non5} \\
\begin{split} \ell\left(1,0,1-\delta\right) &\ge \ell\left(1,m,\alpha^\prime\right)+2, \\ \ell\left(2,0,\delta\right) &\ge \ell\left(1,m,\alpha^\prime\right)+2 \end{split} \Bigg\} \quad \mbox{for all } \, \alpha^\prime > \beta \, \mbox{ and } \, m \ge 1. \label{cond_l_non6}
\end{align}
\end{subequations}
For the explicit choice \eqref{explicit}, we are indeed able to fulfill the ``linear" conditions \eqref{linear_cond} and the ``nonlinear" conditions \eqref{cond_l_non}. The relevance of conditions~\eqref{cond_l_non} will become clear in the proof of the main estimate for the nonlinearity (cf.~Proposition~\ref{prop:nonlinear_est}). The aim of this section is to show the following statement:
\begin{theorem}\label{th:regularity}
Suppose $N_0 \in \N_0$ and further suppose that conditions~\eqref{linear_cond} and \eqref{cond_l_non} are fulfilled (cf.~\eqref{explicit} for an explicit choice). Then there exist $\eps > 0$ and $\delta > 0$ such that for any $u^{(0)} \in U_0$ with $\vertiii{u^{(0)}}_0 \le \eps$ (cf.~\eqref{norm_initial}) problem~\eqref{cauchy} has a unique solution $u \in U$. This solution obeys the \emph{a-priori estimate} $\vertiii{u} \lesssim \vertiii{u^{(0)}}_0$ (cf.~\eqref{norm_sol}) with a constant independent of $u^{(0)}$.
\end{theorem}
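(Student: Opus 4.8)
The plan is to solve the nonlinear problem~\eqref{cauchy} by a contraction-mapping argument resting on two inputs: the linear maximal-regularity estimate~\eqref{mr_main} and the estimate for the nonlinearity of Proposition~\ref{prop:nonlinear_est}. From the latter I will use the following: there are a threshold $\eta > 0$ and a constant $C_2$ (both depending on $N_0$ and $\delta$) such that for all $u, \tilde u \in U$ with $\vertiii{u}, \vertiii{\tilde u} \le \eta$ one has $\NN(u) \in F$ together with
\[
\vertiii{\NN(u)}_1 \le C_2\,\vertiii{u}^2, \qquad \vertiii{\NN(u) - \NN(\tilde u)}_1 \le C_2\big(\vertiii{u} + \vertiii{\tilde u}\big)\,\vertiii{u - \tilde u}.
\]
This is the genuinely nontrivial ingredient: it relies on the factorization of the $5$-linear form $\MM$, on the coefficient bounds of Lemma~\ref{lem:est_coeff}, on the $C^0$-bounds of Lemma~\ref{lem:c0control}, and crucially on the cancellations forcing the generalized power series of $\NN(u)$ to carry neither an $x^0$- nor an $x^\beta$-term, so that $\NN(u)$ does land in $F$ and not merely in a larger space. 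By Proposition~\ref{prop:main_lin} I let $S\colon U_0 \times F \to U$ be the linear solution operator, which is linear and bounded, $\vertiii{S[u^{(0)},f]} \le C_1\big(\vertiii{u^{(0)}}_0 + \vertiii{f}_1\big)$.

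Given $u^{(0)} \in U_0$, I set $\Phi(u) := S\big[u^{(0)}, \NN(u)\big]$ and fix $\rho \in (0,\eta]$ so small that $C_1 C_2\,\rho \le \tfrac14$; then I put $\eps := \rho/(2 C_1)$. Assuming $\vertiii{u^{(0)}}_0 \le \eps$, the closed ball $B_\rho := \{u \in U\colon \vertiii{u}\le\rho\}$, a complete metric space for the metric induced by $\vertiii{\cdot}$, is mapped into itself by $\Phi$, because for $u \in B_\rho$
\[
\vertiii{\Phi(u)} \le C_1\big(\vertiii{u^{(0)}}_0 + \vertiii{\NN(u)}_1\big) \le C_1\eps + C_1 C_2\,\rho^2 \le \tfrac{\rho}{2} + \tfrac{\rho}{4} \le \rho,
\]
while for $u, \tilde u \in B_\rho$, by linearity of $S$ and $\Phi(u) - \Phi(\tilde u) = S[0, \NN(u) - \NN(\tilde u)]$,
\[
\vertiii{\Phi(u) - \Phi(\tilde u)} \le C_1 C_2\big(\vertiii{u} + \vertiii{\tilde u}\big)\,\vertiii{u - \tilde u} \le 2 C_1 C_2\,\rho\,\vertiii{u - \tilde u} \le \tfrac12\,\vertiii{u - \tilde u}.
\]
Banach's fixed-point theorem then produces a unique $u \in B_\rho$ with $u = \Phi(u)$, and by construction of $S$ this $u$ solves~\eqref{cauchy} and lies in $U$.

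The a-priori estimate comes out of the linear bound once more: from $u = \Phi(u)$ one gets $\vertiii{u} \le C_1\vertiii{u^{(0)}}_0 + C_1 C_2\,\vertiii{u}^2 \le C_1\vertiii{u^{(0)}}_0 + C_1 C_2\,\rho\,\vertiii{u} \le C_1\vertiii{u^{(0)}}_0 + \tfrac12\vertiii{u}$, hence $\vertiii{u} \le 2 C_1\vertiii{u^{(0)}}_0$ with a constant independent of $u^{(0)}$. Uniqueness in all of $U$, not just in $B_\rho$, I would deduce from the nestedness of the spaces $U$ for increasing $N_0$ together with the fact that~\eqref{cauchy} already has at most one solution in the space $U$ associated with $N_0 = 1$ (part of \cite[Th.~3.1]{ggko.2014}), so that any two solutions in $U$ coincide; alternatively, a routine continuation argument in the size of the data combined with the a-priori bound shows that every $U$-solution must lie in $B_\rho$ and hence equals the fixed point.

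The one real obstacle is Proposition~\ref{prop:nonlinear_est}. There $\NN(u)$ must be decomposed into the parts that are $j$-linear in $u$ for $j = 2,\dots,5$ (obtained from $\MM(1+u,\dots,1+u)$), and each resulting product estimated in the strongly weighted, time-weighted norm $\vertiii{\cdot}_1$: the factor carrying the top-order spatial derivative is controlled by the solution norm $\vertiii{\cdot}$, the remaining factors by the $C^0$-type bounds of Lemma~\ref{lem:c0control} and the coefficient bounds of Lemma~\ref{lem:est_coeff}, while the powers of $x$ have to be tracked so that the products still decay at the rates demanded at $x = 0$. The conditions~\eqref{cond_l_non} on the derivative counts $\ell(n,m,\alpha^\prime)$ are exactly what is needed to close this bookkeeping, and the symmetry properties of the multilinear form $\MM$ are what kill the forbidden $x^0$- and $x^\beta$-contributions so that $\NN(u)$ stays in $F$. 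I expect that to be where essentially all the work goes; the contraction scheme above is then standard.
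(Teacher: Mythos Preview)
Your contraction-mapping scheme is sound in spirit but differs from the paper's argument in one essential respect, and this difference costs you an ingredient that the paper does not have to supply.

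The paper does \emph{not} run a contraction in the full space $U$ for general $N_0$. Instead it exploits that the case $N_0 = 1$ is already settled in \cite{ggko.2014}: the Picard iterates $u^{(\nu+1)} := S[u^{(0)},\NN(u^{(\nu)})]$ are known to converge (in the $N_0=1$-norm, hence pointwise) to the unique solution $u$. For general $N_0$ the paper then uses only the \emph{self-bound} \eqref{non_main} from Proposition~\ref{prop:nonlinear_est} together with the linear estimate \eqref{mr_main} to obtain a uniform bound $\vertiii{u^{(\nu)}} \le (2C)^{-1}$ on the iterates in the stronger norm. Weak-$*$ compactness then places the (already identified) limit $u$ in $U$, and lower semicontinuity plus one more application of \eqref{mr_main} and \eqref{non_main} yields the a-priori estimate.

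Your route, by contrast, needs the Lipschitz estimate
\[
\vertiii{\NN(u) - \NN(\tilde u)}_1 \lesssim \big(\vertiii{u} + \vertiii{\tilde u}\big)\,\vertiii{u - \tilde u},
\]
which you attribute to Proposition~\ref{prop:nonlinear_est}; but that proposition states only the self-bound $\vertiii{\NN(u)}_1 \lesssim \max_{m=2,5}\vertiii{u}^m$, not the difference estimate. Since $\NN$ is polynomial (a sum of $2$- to $5$-linear pieces of $\MM_{\mathrm{sym}}$), the Lipschitz bound is certainly obtainable by the same bookkeeping that proves Proposition~\ref{prop:nonlinear_est}, applied to the telescoped differences; but it is additional work, and the paper's compactness maneuver is precisely what allows one to dispense with it. So your approach is correct modulo proving that extra Lipschitz estimate, while the paper's approach trades the Lipschitz bound for a soft compactness step together with the already-known $N_0=1$ convergence.
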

It is quite apparent that Theorem~\ref{th:regularity} (with $N_0$ replaced by $N_0 + 1$) yields Theorem~\ref{th:main} (the regularity and decay properties of $u_i = u_i(t)$ and $R_{N_0} = R_{N_0}(x,t)$ in \eqref{expansion} follow from Lemma~\ref{lem:est_coeff} and the definition of $\vertiii{\cdot}$, cf.~\eqref{norm_sol}, by approximation with smooth functions as given in Definition~\ref{def:spaces}). Theorem~\ref{th:regularity} in turn has already been proven for $N_0 = 1$ in \cite[Th.~3.1]{ggko.2014} so that in particular uniqueness holds true. The fact that existence also holds in a smaller space follows by applying the following proposition:
\begin{proposition}\label{prop:nonlinear_est}
Suppose that $N_0 \in \N$ and $\delta > 0$ is chosen sufficiently small. Furthermore suppose that conditions~\eqref{linear_cond}~and~\eqref{cond_l_non} are fulfilled (cf.~\eqref{explicit} for an explicit choice). Then the following estimate for the nonlinearity $\NN(u)$ (defined in \eqref{nonlinearity} and \eqref{5linear}) holds true:
\begin{equation}\label{non_main}
\vertiii{\NN(u)}_1 \lesssim \max_{m = 2, 5} \vertiii{u}^m \quad \mbox{for any } \, u \in U,
\end{equation}
where the constant in the estimate is independent of $u$.
\end{proposition}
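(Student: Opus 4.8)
The plan is to decompose $\NN(u)$ according to \eqref{nonlinearity} and \eqref{5linear} into a sum of multilinear terms of the form $\MM(u_1,\dots,u_5)$ where at least two (and at most five) of the slots carry a nontrivial $u$-factor and the remaining slots carry the constant $1$; the structure of $\MM$ in \eqref{5linear} shows each such term is a product of five factors drawn from $\{u,Du,\dots,D^4u\}$ (with appropriate shifts $D-\gamma$) together with the constant factors $F_1F_2=1+\cdots$. Because the norm $\vertiii{\cdot}_1$ is, via Proposition~\ref{prop:elliptic} and the equivalence established in Section~\ref{ssec:heur_par}, comparable to the sum over $(\alpha,N)\in\AAA$ of weighted space-time norms of $r^{(N-m)}=\bigl(\prod p(D-n')\bigr)\bigl(\prod_{i\in J_{N-m}}(D-i)\bigr)\partial_t^m f$ applied to $f=\NN(u)$, the first step is to commute all the scaling-invariant operators $p(D-n')$ and $(D-i)$ (and the time derivatives $\partial_t^m$) through the multilinear form $\MM$. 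Using the commutation relation $Dx=x(D+1)$ and the Leibniz rule for $D$ acting on products, this produces a finite sum of terms of the same multilinear type but with higher (yet bounded, by $\ell(n,m,\alpha')$ many) $D$-derivatives and up to $m$ time-derivatives distributed over the factors. The point of conditions~\eqref{cond_l_non} is precisely that the derivative budgets $\ell(n,m,\alpha')$ are large enough that, after this redistribution, every factor can be controlled in a norm already appearing in $\vertiii{u}$.

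The second step is the core estimate: for each resulting term $\prod_{j} D^{\ell_j}\partial_t^{m_j}(u \text{ or }1)$, one distributes the spatial weight $x^{-\alpha'-N+m+1}$ and the time weight $t^{(\alpha+N)-3/2}$ among the factors, so that their exponents add up correctly, and then applies a Hölder-type argument. The idea, as in the analogous estimate in \cite[Sec.~8]{ggko.2014}, is that all but one factor are placed in an $L^\infty$-in-$x$ norm — controlled by Lemma~\ref{lem:c0control} (note the need to isolate the boundary value $u_0$, since $u$ itself is only bounded, not small in weighted norms with positive weight; this is why the splitting $u=(u-u_0)+u_0$ is used and \eqref{est_c0norm} is stated for both $u$ and $u-u_0$) — while the remaining factor is placed in a weighted $L^2$-in-$x$ norm. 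The case split $\alpha'+N-m-1 < \beta$ versus $>\beta$ in \eqref{cond_l_non20}–\eqref{cond_l_non2} reflects whether one can afford to keep the boundary value in $L^\infty$ or must instead absorb it into the small weighted norms; the half-integer shifts in \eqref{cond_l_non3}–\eqref{cond_l_non4} handle the anisotropic Hardy-type trade between time- and space-regularity (Lemma~\ref{lem:anisotropic}) exactly as in the linear absorption argument. Since the multilinear term has between two and five $u$-factors, the outcome is a bound by $\vertiii{u}^m$ with $m\in\{2,3,4,5\}$, whence the $\max_{m=2,5}\vertiii{u}^m$ on the right-hand side of \eqref{non_main}.

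The main obstacle — and the genuinely delicate part — is the bookkeeping of the \emph{singular expansion}: the norm $\vertiii{\cdot}_1$ does not control $r^{(n)}$ itself but rather $r^{(n)}$ with its leading generalized-power-series coefficients $f_i x^i$, $i\in K_{N_0}\setminus\{0,\beta\}$, subtracted (cf.~\eqref{norm_rhs}). Therefore one must verify that $\NN(u)$, when $u$ has the expansion \eqref{expansion}, indeed has a matching expansion $\NN(u)=\sum_{i\in K_{N_0}\setminus\{0,\beta\}}\NN(u)_i(t)\,x^i+\cdots$, i.e. that the multiplication of generalized power series in $x$ and $x^\beta$ stays within the admissible exponent set $K_{N_0}$ (this uses that $K_{N_0}$ is closed under addition, which is immediate from \eqref{def_kn}), and that the coefficients of $\NN(u)$ inherit the required time-decay and regularity from those of $u$ via Lemma~\ref{lem:est_coeff} — a product of coefficients obeying $\int t^{2i+2m-1}|\d^m u_i/\d t^m|^2\,dt\lesssim\vertiii{u}^2$ must itself obey the corresponding bound with the exponents adding. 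This is where the symmetry properties of $\MM$ (its invariance under permuting the last three arguments, and the specific shifts $-\tfrac12,+\tfrac12$) are used: they guarantee that the lowest-order contributions cancel appropriately so that $\NN(u)$ genuinely starts at order $O(x)$ (in fact the $x^0$ and $x^\beta$ coefficients vanish because $p(D)$ annihilates $1$ and $x^\beta$ and $\NN$ is built to be consistent with this), matching the exclusion of $\{0,\beta\}$ in the definition of $F$. Once these structural facts are in place, the remaining estimates are the routine (if lengthy) Hölder-plus-Hardy computations sketched above, and I would relegate their full details to the proof proper.
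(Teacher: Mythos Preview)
Your outline is broadly on the right track and matches the paper's strategy in its essentials: multilinear expansion of $\NN(u)$, the sub-/super-critical split according to whether $\alpha'+N-m-1\lessgtr\beta$, and a H\"older argument placing one factor in weighted $L^2$ and the rest in $L^\infty$ via Lemma~\ref{lem:c0control}. Two points deserve correction, however.

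First, the paper does \emph{not} work with the starred norm $\vertiii{\cdot}_{*,1}$ and does not ``commute $p(D-n')$ and $(D-i)$ through $\MM$ via Leibniz''; it works directly with the subtracted-expansion form \eqref{norm_rhs} of $\vertiii{\cdot}_1$. This matters because in the super-critical case the quantity to bound is $\bigl|\partial_t^m\NN(u)-\sum_{\beta<i<\alpha'+N-m-1}(\partial_t^m\NN(u))_i x^i\bigr|_{\ell,\alpha'+N-m-1}$, and the mechanism is not merely to ``verify that $\NN(u)$ has a matching expansion'' but to \emph{iteratively peel off} the truncated expansion from each factor in turn: write $\omega^{(1)}=(\omega^{(1)}-\sum\omega^{(1)}_{i_1}x^{i_1})+\sum\omega^{(1)}_{i_1}x^{i_1}$, estimate the remainder piece by the sub-critical scheme, then for each coefficient piece $\omega^{(1)}_{i_1}x^{i_1}$ repeat with $\omega^{(2)}$ at the reduced weight $\alpha'+N-m-1-i_1$, and so on. This telescoping is what conditions~\eqref{cond_l_non3}--\eqref{cond_l_non4} actually serve (they let you hop between $(\alpha,N)$ and $(\alpha\pm\tfrac12,N\mp1)$ when the reduced weight falls outside the current coercivity interval), not an anisotropic Hardy argument. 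The paper also needs the finer decomposition $u-u_0=(u-u_0-u_\beta x^\beta)+u_\beta x^\beta$ in the first slot, exploiting $\MM_{\mathrm{sym}}(x^\beta,1,\dots,1)=\tfrac15 p(D)x^\beta=0$, to split the super-critical terms into the two types (a) and (b); type (b) requires a separate treatment because the leading $u_\beta$ coefficient only has an $L^2_t$ bound when $m_1=m=N-1=N_0-1$ (cf.~\eqref{est_coeff1} versus \eqref{est_coeff2}).

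Second, a minor factual slip: $\MM$ in \eqref{5linear} is \emph{not} invariant under permuting the last three arguments; the paper passes to the full symmetrization $\MM_{\mathrm{sym}}$ from the outset, and it is $\MM_{\mathrm{sym}}(1,\dots,1)=0$ together with $\MM_{\mathrm{sym}}(x^\beta,1,\dots,1)=0$ that drives the decomposition, not any partial symmetry of $\MM$ itself.
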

Before proving Proposition~\ref{prop:nonlinear_est}, we show how it can be used to prove Theorem~\ref{th:regularity}:
\begin{proof}[of Theorem~\ref{th:regularity}]
In Proposition~\ref{prop:main_lin} we have constructed a solution operator $S: \, U_0 \times F \to U$ to the linear problem \eqref{lin_cauchy}, so that the nonlinear problem \eqref{cauchy} is equivalent to
\begin{equation}\label{fixed}
u = S\left[u^{(0)},\NN(u)\right].
\end{equation}
This fixed-point problem can be solved by applying the contraction-mapping theorem, which for $N_0 = 1$ has indeed been carried out in the proof of \cite[Th.~3.1]{ggko.2014} under the assumption of small data $u^{(0)}$. This implies that for $\vertiii{u^{(0)}}_0 \ll 1$ the sequence $\left(u^{(\nu)}\right)_{\nu \in \N}$ defined through
\begin{equation}\label{def_sequence}
u^{(1)} := S[u^{(0)},0] \quad \mbox{and} \quad u^{(\nu+1)} := S\left[u^{(0)},\NN\left(u^{(\nu)}\right)\right] \quad \mbox{for } \, \nu \ge 1
\end{equation}
converges in $U$ for $N_0 = 1$ to the unique solution $u$. In particular $u^{(\nu)}$ converges point-wise to $u$ (cf.~Lemma~\ref{lem:c0control}). For arbitrary $N_0 \ge 1$ we note that since $u^{(\nu)} \in U$, applying the maximal-regularity estimate \eqref{mr_main} (cf.~Proposition~\ref{prop:main_lin}) and the nonlinear estimate \eqref{non_main} (cf.~Proposition~\ref{prop:nonlinear_est}), we obtain
\begin{equation}\label{est_sequence_1}
\vertiii{u^{(1)}} \le C \vertiii{u^{(0)}}_0 \quad \mbox{and} \quad \vertiii{u^{(\nu+1)}} \le C \left(\vertiii{u^{(0)}}_0 + \max_{m = 2,5} \vertiii{u^{(\nu)}}^m\right) \quad \mbox{for } \, \nu \ge 1,
\end{equation}
where $C > 0$ is independent of $u^{(\nu)}$. Assuming $\eps \le (4 C^2)^{-1}$ and $2 C \ge 1$, estimates~\eqref{est_sequence_1} upgrade to
\begin{equation}\label{est_sequence_2}
\vertiii{u^{(1)}} \le \frac{1}{4 C} \quad \mbox{and} \quad \vertiii{u^{(\nu+1)}} \le \frac{1}{4 C} + C \max_{m = 2,5} \vertiii{u^{(\nu)}}^m \quad \mbox{for } \, \nu \ge 1.
\end{equation}
Inequalities~\eqref{est_sequence_2} in turn imply
\begin{equation}\label{est_sequence_3}
\vertiii{u^{(\nu)}} \le \frac{1}{2 C} \quad \mbox{for all } \, \nu \ge 1.
\end{equation}
By weak-$*$-compactness, $u^{(\nu)}$ has a subsequence that converges in the weak-$*$-topology of $U$ to the unique limit $u$ and therefore $u \in U$. By weak-$*$ lower semi-continuity of the norm $\vertiii{\cdot}$, estimate~\eqref{est_sequence_3} implies
\begin{equation}\label{est_lim_u}
\vertiii{u} \le \frac{1}{2 C}.
\end{equation}
Applying the maximal-regularity estimate \eqref{mr_main} (cf.~Proposition~\ref{prop:main_lin}) and the nonlinear estimate \eqref{non_main} (cf.~Proposition~\ref{prop:nonlinear_est}) once more to the fixed-point equation \eqref{fixed}, we obtain
\[
\vertiii{u} \le C \left(\vertiii{u^{(0)}}_0 + \max_{m = 2,5} \vertiii{u}^m\right),
\]
which, in view of \eqref{est_lim_u} and since $2 C \ge 1$, upgrades to $\vertiii{u} \lesssim \vertiii{u^{(0)}}_0$.
\end{proof}

\subsection{Nonlinear estimates\label{ssec:nonest}}
We conclude the paper with estimating the nonlinear part $\NN(u)$ of equation~\eqref{lin_pde}:
\begin{proof}[of Proposition~\ref{prop:nonlinear_est}]
We repeat some of the observations on the formal structure of the nonlinearity $\NN(u)$ that are contained in \cite[Sec.~8]{ggko.2014}:
\subsubsection*{The formal structure of the nonlinearity}
We recall that the nonlinearity $\NN(u)$ can be written as
\[
\NN(u) \stackrel{\eqref{nonlinearity}}{=} p(D) u - \MM_\mathrm{sym}(1+u,\cdots,1+u),
\]
where $p(D) u = 5 \MM_\mathrm{sym}(1+u,1,\cdots,1)$ and $\MM_\mathrm{sym}$ is the symmetrization of $\MM$. Since we know\footnote{The identity \eqref{vanish_m_1} holds, as the single factor $D$ appears once, because the traveling wave $u_\mathrm{TW} \equiv 0$ is a solution of the nonlinear problem \eqref{cauchy}.}
\begin{equation}\label{vanish_m_1}
\MM_\mathrm{sym}(1,\cdots,1) \stackrel{\eqref{5linear}}{=} 0,
\end{equation}
we can conclude that by multi-linearity $\NN(u)$ is a linear combination of terms of the form
\begin{equation}\label{struct_nu}
\MM_\mathrm{sym}\left(u,u,\omega^{(3)}, \omega^{(4)}, \omega^{(5)}\right) \quad \mbox{with } \, \omega^{(3)}, \omega^{(4)}, \omega^{(5)} \in \{1,u\}.
\end{equation}
As $u$ can be decomposed into $u = (u-u_0) + u_0$, once more appealing to \eqref{vanish_m_1} and using multi-linearity, we infer from \eqref{struct_nu} that $\NN(u)$ is a linear combination (with constant coefficients) of terms of the form
\begin{equation}\label{struct_nu_2}
\MM_\mathrm{sym}\left(u-u_0,\omega^{(2)}, \cdots, \omega^{(5)}\right) \quad \mbox{with } \, \omega^{(2)} \in \{u_0,u-u_0\}, \; \omega^{(3)},\omega^{(4)}, \omega^{(5)} \in \{1,u_0,u-u_0\}.
\end{equation}
For the first entry of $\MM_\mathrm{sym}$ in \eqref{struct_nu_2} we may further use $u-u_0 = (u-u_0-u_\beta x^\beta) + u_\beta x^\beta$ and the fact that $\MM_\mathrm{sym}(x^\beta,1,\cdots,1) = \frac 1 5 p(D) x^\beta \stackrel{\eqref{poly}}{=} 0$. Thus $\NN(u)$ is a linear combination (with constant coefficients) of terms of the form
\begin{subequations}\label{struct_nu_3}
\begin{equation}\label{struct_nu_31}
\MM_\mathrm{sym}\left(u-u_0-u_\beta x^\beta,\omega^{(2)}, \cdots, \omega^{(5)}\right) \quad \mbox{with } \, \omega^{(2)} \in \{u_0,u-u_0\}, \; \omega^{(3)},\omega^{(4)}, \omega^{(5)} \in \{1,u_0,u-u_0\}.
\end{equation}
or
\begin{equation}\label{struct_nu_32}
\MM_\mathrm{sym}\left(u_\beta x^\beta,u-u_0, \omega^{(3)}, \omega^{(4)}, \omega^{(5)}\right) \quad \mbox{with } \, \omega^{(3)},\omega^{(4)}, \omega^{(5)} \in \{1,u_0,u-u_0\}.
\end{equation}
\end{subequations}
%
\subsubsection*{The norms}
In view of the definition of the norm $\vertiii{\cdot}_1$ in \eqref{norm_rhs}, it suffices to estimate the expression
\begin{equation}\label{rhs_term}
\int_0^\infty t^{2 (\alpha+N)-3} \verti{\partial_t^m \NN(u) - \sum_{\beta < i < \alpha^\prime+N-m-1} \frac{\d^m (\NN(u))_i}{\d t^m} x^i}_{\ell,\alpha^\prime+N-m-1}^2 \d t,
\end{equation}
where $\ell = \ell(N-m,m,\alpha^\prime)$ and as in \eqref{expansion}
\[
\NN(u) = \sum_{i \in K_{N_0}} \left(\NN(u)\right)_i x^i + O\left(x^{N_0}\right) \quad \mbox{as } \, x \searrow 0 \, \mbox{ and } \, t > 0,
\]
for each $(\alpha,N) \in \AAA$ with $\alpha^\prime = \alpha \pm \delta \in (0,1)$ and $m \in \{0,\cdots, N-1\}$ individually. We distinguish between sub- and super-critical terms, that is, terms for which either $\alpha^\prime+N-m-1 < \beta$ or $\alpha^\prime+N-m-1 > \beta$.
\subsubsection*{Estimating the sub-critical terms}
In this part of the proof, we concentrate on the \emph{sub-critical} terms, that is, the terms for which $\alpha^\prime+N-m-1 < \beta$. Then the sum $\sum_{\beta < i < \alpha^\prime+N-m-1} \frac{\d^m (\NN(u))_i}{\d t^m} x^i$ is identically zero. It is convenient to use the decomposition \eqref{struct_nu_2} for the nonlinearity $\NN(u)$, so that by distributing the $D$-derivatives and applying the triangle inequality, it suffices to estimate
\[
\int_0^\infty t^{2 (\alpha+N)-3} \verti{\partial_t^m \left(\omega^{(1)}\cdots\omega^{(5)}\right)}_{\alpha^\prime+N-m-1}^2 \d t,
\]
where
\begin{align*}
\omega^{(1)} &= D^{\ell_1} (u-u_0) \quad \mbox{with } \, 0 \le \ell_1 \le \ell+4,\\
\omega^{(2)} &\in \left\{u_0, D^{\ell_2} (u-u_0)\right\} \quad \mbox{with } \, 0 \le \ell_2 \le \frac{\ell+4}{2},\\
\omega^{(j)} &\in \left\{1, u_0, D^{\ell_j} (u-u_0)\right\} \quad \mbox{with } \, 0 \le \ell_j \le \frac{\ell+4}{3} \, \mbox{ for } \, j \ge 3.
\end{align*}
Now we distribute the time derivatives on the individual factors, so that it suffices to estimate
\[
\int_0^\infty t^{2 (\alpha+N)-3} \verti{\omega^{(1)}\cdots\omega^{(5)}}_{\alpha^\prime+N-m-1}^2 \d t,
\]
with new factors $\omega^{(j)}$ obeying
\begin{align*}
\omega^{(1)} &= \partial_t^{m_1} D^{\ell_1} (u-u_0) \quad \mbox{with } \, 0 \le \ell_1 \le \ell+4 \, \mbox{ and } \, 0 \le m_1 \le N-1,\\
\omega^{(2)} &\in \left\{\frac{\d^{m_2} u_0}{\d t^{m_2}}, \partial_t^{m_2} D^{\ell_2} (u-u_0)\right\} \quad \mbox{with } \, 0 \le \ell_2 \le \frac{\ell+4}{2} \, \mbox{ and } \, 0 \le m_2 \le N-1,\\
\omega^{(j)} &\in \left\{\delta_{m_j,0}, \frac{\d^{m_j} u_0}{\d t^{m_j}}, \partial_t^{m_j} D^{\ell_j} (u-u_0)\right\} \quad \mbox{with } \, 0 \le \ell_j \le \frac{\ell+4}{3} \, \mbox{ and } \, 0 \le m_j \le N-1 \, \mbox{ for } \, j \ge 3,
\end{align*}
where $\sum_{j = 1}^5 m_j = m$. This enables us to obtain the following bound:
\begin{equation}\label{est_sub_1}
\begin{aligned}
\int_0^\infty t^{2 (\alpha+N)-3} \verti{\omega^{(1)}\cdots\omega^{(5)}}_{\alpha^\prime+N-m-1}^2 \d t \lesssim \, & \int_0^\infty t^{2 (\alpha+N-m+m_1)-3} \verti{\omega^{(1)}}_{\alpha^\prime+N-m-1}^2 \d t \\
&\times \prod_{j = 2}^5 \sup_{t \ge 0} t^{2 m_j} \vertii{\omega^{(j)}}^2,
\end{aligned}
\end{equation}
where $\vertii{v} := \sup_{x < 0} \verti{v(x)}$. Then we note that
\begin{align*}
\int_0^\infty t^{2 (\alpha+N-m+m_1)-3} \verti{\omega^{(1)}}_{\alpha^\prime+N-m-1}^2 \d t &\lesssim \int_0^\infty t^{2 (\alpha+N-m+m_1)-3} \verti{\partial_t^{m_1} u}_{\ell\left(N-m,m_1,\alpha^\prime\right)+4,\alpha^\prime+N-m-1}^2 \d t \\
&\le \vertiii{u}^2
\end{align*}
for the first term on the right-hand side of \eqref{est_sub_1}, provided that condition~\eqref{cond_l_non1} is fulfilled.

\medskip

We now turn our attention to the terms $\omega^{(j)}$ with $j \ge 2$ in \eqref{est_sub_1}. If $\omega^{(j)} = \frac{\d^{m_j} u_0}{\d t^{m_j}}$ we have $\sup_{t \ge 0} t^{2 m_j} \vertii{\omega^{(j)}}^2 \lesssim \vertiii{u}^2$ by estimate~\eqref{est_coeff2} of Lemma~\ref{lem:est_coeff}. Finally, if $\omega^{(j)} = \partial_t^{m_j} D^{\ell_j} (u-u_0)$, Lemma~\ref{lem:c0control} yields $\sup_{t \ge 0} t^{2 m_j} \vertii{\omega^{(j)}}^2 \lesssim \vertiii{u}^2$, provided that condition~\eqref{cond_l_non20} is fulfilled.

\medskip

In summary we can estimate the term in \eqref{rhs_term} in the sub-critical case $\alpha^\prime+N-m-1 < \beta$ as follows:
\begin{equation}\label{main_subcrit}
\int_0^\infty t^{2 (\alpha+N)-3} \verti{\partial_t^m \NN(u) - \sum_{\beta < i < \alpha^\prime+N-m-1} \frac{\d^m (\NN(u))_i}{\d t^m} x^i}_{\ell,\alpha^\prime+N-m-1}^2 \d t \lesssim \vertiii{u}^4 \times \left(1 + \vertiii{u}^6\right),
\end{equation}
where $\ell = \ell(N-m,m,\alpha^\prime)$.
\subsubsection*{Estimating the super-critical terms}
We now aim at estimating terms of the form \eqref{rhs_term} with $\ell = \ell(N-m,m,\alpha^\prime)$ for each $(\alpha,N) \in \AAA$ with $\alpha^\prime = \alpha \pm \delta \in (0,1)$ and $m \in \{0,\cdots, N-1\}$, where we assume $\alpha^\prime+N-m-1 > \beta$ (the ``super-critical" terms). Here we use the decomposition of $\NN(u)$ in the form \eqref{struct_nu_3}, that is, the super-critical terms \eqref{rhs_term} can be estimated by a sum of terms of the form
\begin{equation}\label{omegas}
\int_0^\infty t^{2 (\alpha+N)-3} \verti{\partial_t^m \left(\omega^{(1)}\cdots\omega^{(5)}\right) - \sum_{i < \alpha^\prime+N-m-1} \frac{\d^m}{\d t^m}\left(\omega^{(1)}\cdots\omega^{(5)}\right)_i x^i}_{\alpha^\prime+N-m-1}^2 \d t,
\end{equation}
where one of the following two situations occurs:
\begin{itemize}
\item[(a)] The $\omega^{(j)}$ obey (cf.~\eqref{struct_nu_31})
\begin{align*}
\omega^{(1)} &= D^{\ell_1} (u - u_0 - u_\beta x^\beta) \quad \mbox{with } \, 0 \le \ell_1 \le \ell+4,\\
\omega^{(2)} &\in \left\{u_0, D^{\ell_2} (u-u_0)\right\} \quad \mbox{with } \, 0 \le \ell_2 \le \ell+4,\\
\omega^{(j)} &\in \left\{1, u_0, D^{\ell_j} (u-u_0)\right\} \quad \mbox{with } \, 0 \le \ell_j \le \frac{\ell+4}{2} \, \mbox{ for } \, j \ge 3.
\end{align*}
\item[(b)] The $\omega^{(j)}$ obey (cf.~\eqref{struct_nu_32})
\begin{align*}
\omega^{(1)} &= u_\beta x^\beta,\\
\omega^{(2)} &= D^{\ell_2} (u-u_0) \quad \mbox{with } \, 0 \le \ell_2 \le \ell+4,\\
\omega^{(j)} &\in \left\{1, u_0, D^{\ell_j} (u-u_0)\right\} \quad \mbox{with } \, 0 \le \ell_j \le \frac{\ell+4}{2} \, \mbox{ for } \, j \ge 3.
\end{align*}
\end{itemize}
%
\subsubsection*{Estimating \eqref{omegas} for terms of the form {\rm (a)}}
As a first step, we distribute the time derivatives on the individual factors $\omega^{(j)}$ so that terms of the form \eqref{omegas} fulfilling (a) can be estimated by a sum of terms of the form
\begin{equation}\label{omegas_a}
\int_0^\infty t^{2 (\alpha+N)-3} \verti{\omega^{(1)}\cdots\omega^{(5)} - \sum_{i < \alpha^\prime+N-m-1} \left(\omega^{(1)}\cdots\omega^{(5)}\right)_i x^i}_{\alpha^\prime+N-m-1}^2 \d t,
\end{equation}
where the $\omega^{(j)}$ are given by
\begin{align*}
\omega^{(1)} &= \partial_t^{m_1} D^{\ell_1} (u - u_0 - u_\beta x^\beta) \quad \mbox{with } \, 0 \le \ell_1 \le \ell+4 \, \mbox{ and } \, 0 \le m_1 \le m,\\
\omega^{(2)} &\in \left\{\frac{\d^{m_2} u_0}{\d t^{m_2}}, \partial_t^{m_2} D^{\ell_2} (u-u_0)\right\} \quad \mbox{with } \, 0 \le \ell_2 \le \ell+4 \, \mbox{ and } \, 0 \le m_2 \le m,\\
\omega^{(j)} &\in \left\{\delta_{m_j,0}, \frac{\d^{m_j} u_0}{\d t^{m_j}}, \partial_t^{m_j} D^{\ell_j} (u-u_0)\right\} \quad \mbox{with } \, 0 \le \ell_j \le \frac{\ell+4}{2} \, \mbox{ and } \, 0 \le m_j \le m \, \mbox{ for } \, j \ge 3,
\end{align*}
with $\sum_{j = 1}^5 m_j = m$. Since $\beta > \frac 1 2$ we have $\omega_i^{(1)} \equiv 0$ for $i < 1$ (cf.~\eqref{def_kn} for the definition of admissible exponents $i$). Hence we may decompose $\omega^{(1)}$ according to
\[
\omega^{(1)} = \left(\omega^{(1)} - \sum_{1 \le i_1 < \alpha^\prime+N-m-1} \omega^{(1)}_{i_1} x^{i_1}\right) + \sum_{1 \le i_1 < \alpha^\prime+N-m-1} \omega^{(1)}_{i_1} x^{i_1}
\]
and instead of terms of the form \eqref{omegas_a}, we may estimate terms of the structure
\begin{subequations}\label{terms_a12}
\begin{equation}\label{terms_a1}
\int_0^\infty t^{2 (\alpha+N)-3} \verti{\left(\omega^{(1)} - \sum_{1 \le i_1 < \alpha^\prime+N-m-1} \omega^{(1)}_{i_1} x^{i_1}\right) \omega^{(2)} \cdots\omega^{(5)}}_{\alpha^\prime+N-m-1}^2 \d t,
\end{equation}
and
\begin{equation}\label{terms_a2}
\int_0^\infty t^{2 (\alpha+N)-3} \verti{\omega^{(1)}_{i_1}}^2 \verti{\omega^{(2)}\cdots\omega^{(5)} - \sum_{i < \alpha^\prime+N-m-1-i_1} \left(\omega^{(2)}\cdots\omega^{(5)}\right)_i x^i}_{\alpha^\prime+N-m-1-i_1}^2 \d t,
\end{equation}
\end{subequations}
respectively, where in the latter case $1 \le i_1 < \alpha^\prime+N-m-1$. Furthermore, the term \eqref{terms_a2} identically vanishes if $\omega^{(j)} \in \left\{\delta_{m_j,0}, \frac{\d^{m_j} u_0}{\d t^{m_j}}\right\}$ for all $j \ge 2$, so that we may assume $\omega^{(2)} = \partial_t^{m_2} D^{\ell_2} (u-u_0)$ in this case.

\medskip

We begin by estimating terms of the form \eqref{terms_a1} through
\begin{align*}
& \int_0^\infty t^{2 (\alpha+N)-3} \verti{\left(\omega^{(1)} - \sum_{1 \le i_1 < \alpha^\prime+N-m-1} \omega^{(1)}_{i_1} x^{i_1}\right) \omega^{(2)} \cdots\omega^{(5)}}_{\alpha^\prime+N-m-1}^2 \d t \\
& \quad \lesssim \int_0^\infty t^{2 (\alpha+N-m+m_1)-3} \verti{\omega^{(1)} - \sum_{1 \le i_1 < \alpha^\prime+N-m-1} \omega^{(1)}_{i_1} x^{i_1}}_{\alpha^\prime+N-m-1}^2 \d t \times \prod_{j = 2}^5 \sup_{t \ge 0} t^{2 m_j} \vertii{\omega^{(j)}}^2.
\end{align*}
Then we note that
\begin{align*}
& \int_0^\infty t^{2 (\alpha+N-m+m_1)-3} \verti{\omega^{(1)} - \sum_{1 \le i_1 < \alpha^\prime+N-m-1} \omega^{(1)}_{i_1} x^{i_1}}_{\alpha^\prime+N-m-1}^2 \d t \\
& \quad \lesssim t^{2 (\alpha+N-m+m_1)-3} \verti{\partial_t^{m_1} u - \sum_{i_1 < \alpha^\prime+N-m-1} \frac{\d^{m_1} u_{i_1}}{\d x^{m_1}} x^{i_1}}_{\ell+4,\alpha^\prime+N-m-1}^2 \d t \lesssim \vertiii{u}^2,
\end{align*}
provided that the indices meet condition~\eqref{cond_l_non1}. By Lemma~\ref{lem:est_coeff} and Lemma~\ref{lem:c0control}, respectively, we can further bound $\sup_{t \ge 0} t^{2 m_j} \vertii{\omega^{(j)}}^2 \lesssim \vertiii{u}^2$ for $j \ge 2$ if  $\omega^{(j)} \not\equiv 1$ and we fulfill condition~\eqref{cond_l_non2}.

\medskip

Altogether, terms of the form \eqref{terms_a1} can be bound as follows:
\begin{equation}\label{super_1}
\int_0^\infty t^{2 (\alpha+N)-3} \verti{\left(\omega^{(1)} - \sum_{1 \le i_1 < \alpha^\prime+N-m-1} \omega^{(1)}_{i_1} x^{i_1}\right) \omega^{(2)} \cdots\omega^{(5)}}_{\alpha^\prime+N-m-1}^2 \d t \lesssim \vertiii{u}^4 \times \left(1 + \vertiii{u}^6\right).
\end{equation}

\medskip

For estimating the term \eqref{terms_a2}, we use the $L^2$-bound on $\omega_{i_1}^{(1)}$, i.e.,
\begin{equation}\label{fc_1}
\begin{aligned}
& \int_0^\infty t^{2 (\alpha+N)-3} \verti{\omega^{(1)}_{i_1}}^2 \verti{\omega^{(2)}\cdots\omega^{(5)} - \sum_{i < \alpha^\prime+N-m-1-i_1} \left(\omega^{(2)}\cdots\omega^{(5)}\right)_i x^i}_{\alpha^\prime+N-m-1-i_1}^2 \d t \\
& \quad \lesssim \int_0^\infty t^{2 m_1 + 2 i_1 - 1} \verti{\omega^{(1)}_{i_1}}^2 \d t \\
& \qquad \times \sup_{t \ge 0} t^{2 (\alpha+N-m_1-i_1)-2} \verti{\omega^{(2)}\cdots\omega^{(5)} - \sum_{i < \alpha^\prime+N-m-1-i_1} \left(\omega^{(2)}\cdots\omega^{(5)}\right)_i x^i}_{\alpha^\prime+N-m-1-i_1}^2.
\end{aligned}
\end{equation}
Then we observe that by estimate~\eqref{est_coeff1} of Lemma~\ref{lem:est_coeff} we have for the second line in \eqref{fc_1}
\[
\int_0^\infty t^{2 m_1 + 2 i_1 - 1} \verti{\omega^{(1)}_{i_1}}^2 \d t = \int_0^\infty t^{2 m_1 + 2 i_1 - 1} \verti{\frac{\d^{m_1} u_{i_1}}{\d t^{m_1}}}^2 \d t \lesssim \vertiii{u}^2
\]
due to $i_1 + m_1 < N \le N_0$. Furthermore, we may use the decomposition
\[
\omega^{(2)} = \left(\omega^{(2)} - \sum_{\beta \le i_2 < \alpha^\prime+N-m-1-i_1} \omega^{(2)}_{i_2} x^{i_2}\right) + \sum_{\beta \le i_2 < \alpha^\prime+N-m-1-i_1} \omega^{(2)}_{i_2} x^{i_2}
\]
and therefore the third line in \eqref{fc_1} reduces to estimating
\begin{subequations}\label{fc_3}
\begin{equation}\label{fc_31}
\sup_{t \ge 0} t^{2 (\alpha+N-m+m_2-i_1)-2} \verti{\omega^{(2)} - \sum_{\beta \le i_2 < \alpha^\prime+N-m-1-i_1} \omega^{(2)}_{i_2} x^{i_2}}_{\alpha^\prime+N-m-1-i_1}^2 \times \prod_{j = 3}^5 \sup_{t \ge 0} t^{2 m_j} \vertii{\omega^{(j)}}^2
\end{equation}
and
\begin{equation}\label{fc_32}
\begin{aligned}
& \sup_{t \ge 0} t^{2 m_2 + 2 i_2} \verti{\omega^{(2)}_{i_2}}^2 \\
& \quad \times \sup_{t \ge 0} t^{2 (\alpha+N-m_1-m_2-i_1-i_2)-2} \verti{\omega^{(3)}\cdots\omega^{(5)} - \sum_{i < \alpha^\prime+N-m-1-i_1-i_2} \left(\omega^{(3)}\cdots\omega^{(5)}\right)_i x^i}_{\alpha^\prime+N-m-1-i_1-i_2}^2,
\end{aligned}
\end{equation}
\end{subequations}
where the term \eqref{fc_32} identically vanishes unless $\omega^{(j)} = \partial_t^{m_j} D^{\ell_j} (u-u_0)$ for at least one $j \ge 3$. We will assume without loss of generality $\omega^{(3)} = \partial_t^{m_3} D^{\ell_3} (u-u_0)$ in this case. Then we can decompose $\omega^{(3)}$ into
\[
\omega^{(3)} = \left(\omega^{(3)} - \sum_{\beta \le i_3 < \alpha^\prime+N-m-1-i_1-i_2} \omega^{(3)}_{i_3} x^{i_3}\right) + \sum_{\beta \le i_3 < \alpha^\prime+N-m-1-i_1-i_2} \omega^{(3)}_{i_3} x^{i_3}
\]
and argue as in the previous step. Hence, in order to close the argument, we need to estimate terms of the from \eqref{fc_31}. Note that not necessarily $\alpha+N-m-\frac 1 2-i_1 \in K_{N_0}$ but we do know that there exist $\left(\alpha_1,\tilde N\right), \left(\alpha_2,\tilde N\right) \in \AAA$ such that
\begin{equation}\label{indices_eq}
\alpha_1 + \tilde N - m_2 - \frac 1 2 \le \alpha+N-m-i_1 \le \alpha_2+\tilde N - m_2 - \frac 1 2
\end{equation}
and  $\alpha_j +  \tilde N$ is maximal ($j = 1$) or minimal ($j = 2$) with this property. Since $i_1 \ge 1$, in particular $\alpha_j \le \alpha + \frac 1 2$ and $\tilde N \le N-1$ or $\alpha_j \le \alpha - \frac 1 2$ and $\tilde N \le N$ for $j = 1, 2$. Further noting that
\[
(t/x)^{2 (\alpha+N-m-1-i_1)} \le (t/x)^{2 \left(\alpha_1+\tilde N-m_2\right)-3} + (t/x)^{2 \left(\alpha_2+\tilde N-m_2\right)-3},
\]
we can estimate
\begin{eqnarray*}
\lefteqn{\sup_{t \ge 0} t^{2 (\alpha+N-m+m_2-i_1)-2} \verti{\omega^{(2)} - \sum_{\beta \le i_2 < \alpha^\prime+N-m-1-i_1} \omega^{(2)}_{i_2} x^{i_2}}_{\alpha^\prime+N-m-1-i_1}^2} \\
&\lesssim& \sup_{t \ge 0} t^{2 (\alpha+N-m+m_2-i_1)-2} \verti{\partial_t^{m_2} u - \sum_{i_2 < \alpha^\prime+N-m-1-i_1} \frac{\d^{m_2} u_{i_2}}{\d t^{m_2}} x^{i_2}}_{\ell+4,\alpha^\prime+N-m-1-i_1}^2 \\
&\stackrel{\eqref{def_wn},\eqref{def_lnm}}{\lesssim}& \sup_{t \ge 0} t^{2 (\alpha+N-m+m_2-i_1)-2} \verti{\partial_t^{m_2} w^{(N-m)}}_{k\left(N-m,m,\alpha^\prime\right)+4,\alpha^\prime+N-m-1-i_1}^2 \\
&\stackrel{\eqref{cond_l_non3}, \eqref{cond_l_non4}}{\lesssim}& \sup_{t \ge 0} t^{2 \left(\alpha_1+\tilde N\right)-3} \verti{\partial_t^{m_2} w^{\left(\tilde N-m_2\right)}}_{k\left(\tilde N-m_2,m_2,\alpha_1^\prime\right)+2,\alpha_1^\prime+\tilde N-m_2-\frac 32}^2 \\
&& + \sup_{t \ge 0} t^{2 \left(\alpha_2+\tilde N\right)-3} \verti{\partial_t^{m_2} w^{\left(\tilde N-m_2\right)}}_{k\left(\tilde N-m_2,m_2,\alpha_2^\prime\right)+2,\alpha_2^\prime+\tilde N-m_2-\frac 32}^2\\
&\stackrel{\eqref{def_wn},\eqref{def_lnm}}{\lesssim}& \sup_{t \ge 0} t^{2 \left(\alpha_1+\tilde N\right)-3} \verti{\partial_t^{m_2} u - \sum_{i_2 < \alpha_1^\prime+\tilde N-m_2-\frac 3 2} \frac{\d^{m_2} u_{i_2}}{\d t^{m_2}} x^{i_2}}_{\ell\left(\tilde N-m_2,m_2,\alpha_1^\prime\right)+2,\alpha_1^\prime+\tilde N-m_2-\frac 32}^2 \\
&& + \sup_{t \ge 0} t^{2 \left(\alpha_2+\tilde N\right)-3} \verti{\partial_t^{m_2} u - \sum_{i_2 < \alpha_2^\prime+\tilde N-m_2-\frac 3 2} \frac{\d^{m_2} u_{i_2}}{\d t^{m_2}} x^{i_2}}_{\ell\left(\tilde N-m_2,m_2,\alpha_2^\prime\right)+2,\alpha_2^\prime+\tilde N-m_2-\frac 32}^2 \\
&\stackrel{\eqref{norm_sol}}{\lesssim}& \vertiii{u}^2,
\end{eqnarray*}
where Proposition~\ref{prop:elliptic} (elliptic maximal regularity) as well as conditions~\eqref{cond_l_non1}, \eqref{cond_l_non3}, and \eqref{cond_l_non4} have been used.

\medskip

Finally, the product $\prod_{j = 3}^5 \sup_{t \ge 0} t^{2 m_j} \vertii{\omega^{(j)}}^2$ in \eqref{fc_3} can be estimated as before by using estimate~\eqref{est_coeff2} of Lemma~\ref{lem:est_coeff} and Lemma~\ref{lem:c0control}, respectively, provided condition~\eqref{cond_l_non2} is satisfied. This shows that we can bound terms of the form \eqref{terms_a2} as
\begin{equation}\label{super_2}
\begin{aligned}
&\int_0^\infty t^{2 (\alpha+N)-3} \verti{\omega^{(1)}_{i_1}}^2 \verti{\omega^{(2)}\cdots\omega^{(5)} - \sum_{i < \alpha^\prime+N-m-1-i_1} \left(\omega^{(2)}\cdots\omega^{(5)}\right)_i x^i}_{\alpha^\prime+N-m-1-i_1}^2 \d t \\
& \quad \lesssim \vertiii{u}^4 \times \left(1 + \vertiii{u}^6\right).
\end{aligned}
\end{equation}
The combination of \eqref{super_1} and \eqref{super_2} gives
\begin{equation}\label{super_a}
\int_0^\infty t^{2 (\alpha+N)-3} \verti{\omega^{(1)}\cdots\omega^{(5)} - \sum_{i < \alpha^\prime+N-m-1} \left(\omega^{(1)}\cdots\omega^{(5)}\right)_i x^i}_{\alpha^\prime+N-m-1}^2 \d t \lesssim \vertiii{u}^4 \times \left(1 + \vertiii{u}^6\right)
\end{equation}
for super-critical terms of type \eqref{omegas_a}.

\subsubsection*{Estimating \eqref{omegas} for terms of the form {\rm (b)}}
Distributing the time derivatives on the factors $\omega^{(j)}$ in \eqref{omegas} meeting (b), it suffices to estimate terms of the form
\begin{equation}\label{omegas_b}
\int_0^\infty t^{2(\alpha+N)-3} \verti{\omega^{(1)}}^2 \verti{\omega^{(2)}\cdots\omega^{(5)} - \sum_{i < \alpha^\prime+N-m-1-\beta} \left(\omega^{(2)}\cdots\omega^{(5)}\right)_i x^i}_{\alpha^\prime+N-m-1-\beta}^2,
\end{equation}
where the $\omega^{(j)}$ obey
\begin{align*}
\omega^{(1)} &= \frac{\d^{m_1} u_\beta}{\d t^{m_1}} \quad \mbox{with } \, 0 \le m_1 \le m,\\
\omega^{(2)} &= \partial_t^{m_2} D^{\ell_2} (u-u_0) \quad \mbox{with } \, 0 \le \ell_2 \le \ell+4 \, \mbox{ and } \, 0 \le m_2 \le m,\\
\omega^{(j)} &\in \left\{\delta_{m_j,0}, \frac{\d^{m_j} u_0}{\d t^{m_j}}, \partial_t^{m_j} D^{\ell_j} (u-u_0)\right\} \quad \mbox{with } \, 0 \le \ell_j \le \frac{\ell+4}{2} \, \mbox{ and } \, 0 \le m_j \le m \, \mbox{ for } \, j \ge 3,
\end{align*}
where $\sum_{j = 1}^5 m_j = m$. If $m_1 = m = N-1 = N_0-1$, we may use the $L^2$-bound on $\omega^{(1)}$ and obtain for the term \eqref{omegas_b}:
\begin{equation}\label{case_b1}
\begin{aligned}
&\int_0^\infty t^{2(\alpha+N)-3} \verti{\omega^{(1)}}^2 \verti{\omega^{(2)}\cdots\omega^{(5)} - \sum_{i < \alpha^\prime+N-m-1-\beta} \left(\omega^{(2)}\cdots\omega^{(5)}\right)_i x^i}_{\alpha^\prime+N-m-1-\beta}^2 \d t \\
& \quad \lesssim \int_0^\infty t^{2 (\beta+N)-3} \verti{\omega^{(1)}}^2 \d t \times \sup_{t \ge 0} t^{2(\alpha-\beta)} \verti{\omega^{(2)}}_{\alpha^\prime-\beta}^2 \times \prod_{j = 3}^5 \sup_{t \ge 0} \vertii{\omega^{(j)}}^2.
\end{aligned}
\end{equation}
Then we note that we can estimate the first term in the second line of \eqref{case_b1} through
\[
\int_0^\infty t^{2 (\beta+N)-3} \verti{\omega^{(1)}}^2 \d t = \int_0^\infty t^{2 (\beta+N)-3} \verti{\frac{\d^{N-1} u_\beta}{\d t^{N-1}}}^2 \d t \lesssim \vertiii{u}^2
\]
by estimate~\eqref{est_coeff1} of Lemma~\ref{lem:est_coeff}. Furthermore, since we are in the super-critical case, we have $(t/x)^{2(\alpha-\beta)} \le 1 + t/x$ and we can bound as follows\footnote{In the special case $N = 1$ (i.e.,~$N_0 = 1$ by assumption) we have $\alpha = \beta$ and the second summand on the right-hand side can be dropped.}
\begin{align*}
\sup_{t \ge 0} t^{2(\alpha-\beta)} \verti{\omega^{(2)}}_{\alpha^\prime-\beta}^2 &\lesssim \sup_{t \ge 0} t^{2(\alpha-\beta)} \verti{D (u - u_0)}_{\ell + 3,\alpha^\prime-\beta}^2 \\
&\lesssim \sup_{t \ge 0} \verti{u}_{\ell+4,-\delta}^2 + \sup_{t \ge 0} \verti{u-u_0}_{\ell+4,\delta}^2 + \left(1 - \delta_{1,N}\right) \sup_{t \ge 0} t \verti{u-u_0}_{\ell+4,\frac 1 2 \pm \delta}^2 \lesssim \vertiii{u}^2,
\end{align*}
where Proposition~\ref{prop:elliptic} has been used and conditions~\eqref{cond_l_non5} and \eqref{cond_l_non6} need to be satisfied.

\medskip

The product $\prod_{j = 3}^5 \sup_{t \ge 0} \vertii{\omega^{(j)}}^2$ in \eqref{case_b1} can be estimated by employing estimate~\eqref{est_coeff2} of Lemma~\ref{lem:est_coeff} and Lemma~\ref{lem:c0control}, respectively, if condition~\eqref{cond_l_non2} holds true. In summary we obtain
\begin{equation}\label{super_3}
\begin{aligned}
&\int_0^\infty t^{2(\alpha+N)-3} \verti{\omega^{(1)}}^2 \verti{\omega^{(2)}\cdots\omega^{(5)} - \sum_{i < \alpha^\prime+N-m-1-\beta} \left(\omega^{(2)}\cdots\omega^{(5)}\right)_i x^i}_{\alpha^\prime+N-m-1-\beta}^2 \\
&\quad\lesssim \vertiii{u}^4 \times \left(1 + \vertiii{u}^6\right)
\end{aligned}
\end{equation}
in the case $m_1 = m = N-1 = N_0-1$.

\medskip

In all other cases, we can estimate \eqref{omegas_b} by taking the $C^0$-bound on $\omega^{(1)}$ and get:
\begin{equation}\label{case_b2}
\begin{aligned}
&\int_0^\infty t^{2(\alpha+N)-3} \verti{\omega^{(1)}}^2 \verti{\omega^{(2)}\cdots\omega^{(5)} - \sum_{i < \alpha^\prime+N-m-1-\beta} \left(\omega^{(2)}\cdots\omega^{(5)}\right)_i x^i}_{\alpha^\prime+N-m-1-\beta}^2 \\
& \quad \lesssim \sup_{t \ge 0} t^{2 m_1 + 2 \beta} \verti{\omega^{(1)}}^2 \\
& \qquad \times \int_0^\infty t^{2 (\alpha+N-m_1-\beta)-3} \verti{\omega^{(2)}\cdots\omega^{(5)} - \sum_{i < \alpha^\prime+N-m-1-\beta} \left(\omega^{(2)}\cdots\omega^{(5)}\right)_i x^i}_{\alpha^\prime+N-m-1-\beta}^2 \d t.
\end{aligned}
\end{equation}
Then we notice
\[
\sup_{t \ge 0} t^{2 m_1 + 2 \beta} \verti{\omega^{(1)}}^2 = \sup_{t \ge 0} t^{2 m_1 + 2 \beta} \verti{\frac{\d^{m_1} u_\beta}{\d t^{m_1}}}^2 \lesssim \vertiii{u}^2
\]
by estimate~\eqref{est_coeff2} of Lemma~\ref{lem:est_coeff}. For the last line of \eqref{case_b2} we may use the decomposition

\[
\omega^{(2)} = \left(\omega^{(2)} - \sum_{\beta \le i_2 < \alpha^\prime+N-m-1-\beta} \omega^{(2)}_{i_2} x^{i_2}\right) + \sum_{\beta \le i_2 < \alpha^\prime+N-m-1-\beta} \omega^{(2)}_{i_2} x^{i_2},
\]
so that it suffices to estimate
\begin{subequations}\label{lc}
\begin{equation}\label{lc_1}
\int_0^\infty t^{2 (\alpha+N-m+m_2-\beta)-3} \verti{\omega^{(2)} - \sum_{\beta \le i_2 < \alpha^\prime+N-m-1-\beta} \omega^{(2)}_{i_2} x^{i_2}}_{\alpha^\prime+N-m-1-\beta}^2 \d t \times \prod_{j = 3}^5 \sup_{t \ge 0} t^{2 m_j} \vertii{\omega^{(j)}}^2
\end{equation}
and
\begin{equation}\label{lc_2}
\begin{aligned}
& \int_0^\infty t^{2 m_2 + 2 i_2 - 1} \verti{\omega^{(2)}_{i_2}}^2 \d t \\
& \quad \times \sup_{t \ge 0} t^{2 (\alpha+N-m_1-m_2-\beta-i_2)-2} \verti{\omega^{(3)}\cdots\omega^{(5)} - \sum_{i < \alpha^\prime+N-m-1-\beta-i_2} \left(\omega^{(3)}\cdots\omega^{(5)}\right)_i x^i}_{\alpha^\prime+N-m-1-\beta-i_2}^2,
\end{aligned}
\end{equation}
\end{subequations}
where the term \eqref{lc_2} is identically zero unless $\omega^{(j)} = \partial_t^{m_j} D^{\ell_j} (u-u_0)$ for at least one $j \ge 3$. Since $\beta + i_2 \ge 1$, the expression \eqref{lc_2} can be treated in the same way as the second and third line of \eqref{fc_1}.

\medskip

For the term \eqref{lc_1} we can treat the product $\prod_{j = 3}^5 \sup_{t \ge 0} t^{2 m_j} \vertii{\omega^{(j)}}^2$ as before (by using Lemma~\ref{lem:c0control} and estimate~\eqref{est_coeff2} of Lemma~\ref{lem:est_coeff}, respectively) given that we fulfill condition~\eqref{cond_l_non2}. For treating the $L^2$-part in \eqref{lc_1} we pick $\left(\alpha_1,\tilde N\right), \left(\alpha_2,\tilde N\right) \in \AAA$ such that
\begin{equation}\label{indices_eq2}
\alpha_1 + \tilde N - m_2 \le \alpha+N-m-\beta \le \alpha_2+\tilde N - m_2
\end{equation}
and  $\alpha_j +  \tilde N$ is maximal ($j = 1$) or minimal ($j = 2$) with this property. Since $\beta > \frac 1 2$ we have in particular $\alpha_j \le \alpha + \frac 1 2$ and $\tilde N - m_2 \le N-m-1$ or $\alpha_j \le \alpha - \frac 1 2$ and $\tilde N - m_2 \le N-m$ for $j = 1, 2$ under the assumption of a sufficiently small $\delta > 0$. Utilizing
\[
(t/x)^{2 (\alpha+N-m-\beta-1)} \le (t/x)^{2 \left(\alpha_1+\tilde N-m_2-1\right)} + (t/x)^{2 \left(\alpha_2+\tilde N-m_2-1\right)},
\]
we obtain the estimate
\begin{eqnarray*}
\lefteqn{\int_0^\infty t^{2 (\alpha+N-m+m_2-\beta)-3} \verti{\omega^{(2)} - \sum_{\beta \le i_2 < \alpha^\prime+N-m-1-\beta} \omega^{(2)}_{i_2} x^{i_2}}_{\alpha^\prime+N-m-1-\beta}^2 \d t}\\
&\lesssim& \int_0^\infty t^{2 (\alpha+N-m+m_2-\beta)-3} \verti{\partial_t^{m_2} u - \sum_{i_2 < \alpha^\prime+N-m-1-\beta} \frac{\d^{m_2} u_{i_2}}{\d t^{m_2}} x^{i_2}}_{\ell+4,\alpha^\prime+N-m-1-\beta}^2 \d t\\
&\stackrel{\eqref{def_wn},\eqref{def_lnm}}{\lesssim}& \int_0^\infty t^{2 (\alpha+N-m+m_2-\beta)-3} \verti{\partial_t^{m_2} w^{(N-m)}}_{k\left(N-m,m,\alpha^\prime\right)+4,\alpha^\prime+N-m-1-\beta}^2 \d t\\
&\stackrel{\eqref{cond_last}}{\lesssim}& \int_0^\infty t^{2 (\alpha_1+\tilde N)-3} \verti{\partial_t^{m_2} w^{\left(\tilde N-m_2\right)}}_{k\left(\tilde N-m_2,m_2,\alpha_1^\prime\right)+4,\alpha_1^\prime+\tilde N-m_2-1}^2 \d t \\
&& + \int_0^\infty t^{2 (\alpha_2+\tilde N)-3} \verti{\partial_t^{m_2} w^{\left(\tilde N-m_2\right)}}_{k\left(\tilde N-m_2,m_2,\alpha_2^\prime\right)+4,\alpha_2^\prime+\tilde N-m_2-1}^2 \d t\\
&\stackrel{\eqref{def_wn},\eqref{def_lnm}}{\lesssim}& \int_0^\infty t^{2 (\alpha_1+\tilde N)-3} \verti{\partial_t^{m_2} u - \sum_{i_2 < \alpha_1^\prime+\tilde N-m_2-1} \frac{\d^{m_2} u_{i_2}}{\d t^{m_2}} x^{i_2}}_{\ell\left(\tilde N-m_2,m_2,\alpha_1^\prime\right)+4,\alpha_1^\prime+\tilde N-m_2-1}^2 \d t \\
&& + \int_0^\infty t^{2 (\alpha_2+\tilde N)-3} \verti{\partial_t^{m_2} u - \sum_{i_2 < \alpha_2^\prime+\tilde N-m_2-1} \frac{\d^{m_2} u_{i_2}}{\d t^{m_2}} x^{i_2}}_{\ell\left(\tilde N-m_2,m_2,\alpha_2^\prime\right)+4,\alpha_2^\prime+\tilde N-m_2-1}^2 \d t \\
&\stackrel{\eqref{norm_sol}}{\lesssim}& \vertiii{u}^2,
\end{eqnarray*}
where we have used elliptic maximal regularity in the form of Proposition~\ref{prop:elliptic}, \eqref{cond_l_non1}, and the conditions
\begin{subequations}\label{cond_last}
\begin{align}
\ell\left(N-m,m,\alpha-\frac 1 2\right) &\ge \ell\left(N-m,m,\alpha^\prime\right) \quad \mbox{for all } \, \alpha \in \left(\frac 1 2, 1\right),\\
\ell\left(N-m-1,m,\alpha+\frac 1 2\right) &\ge \ell\left(N-m,m,\alpha^\prime\right) \quad \mbox{for all } \, \alpha \in \left(0, \frac 1 2\right).
\end{align}
\end{subequations}
These conditions in particular hold if the stronger conditions~\eqref{cond_l_non3}~and~\eqref{cond_l_non4} are fulfilled.

\medskip
In summary we obtain \eqref{super_3} for the case of arbitrary $m_1$ and $N$ as well.

\subsubsection*{Conclusion}
Gathering inequalities~\eqref{main_subcrit},~\eqref{super_a},~and~\eqref{super_3}, we obtain inequality~\eqref{non_main}, thus finishing the proof of Proposition~\ref{prop:nonlinear_est}.
\end{proof}

%
\bibliography{pre_higher_regularity_rev} 
\bibliographystyle{plain} 

\end{document}